\newtheorem{thm}{Theorem}[section]
\newtheorem{lem}[thm]{Lemma}
\newtheorem{prop}[thm]{Proposition}
\theoremstyle{definition}
\newtheorem{defin}[thm]{Definition}
\newtheorem{rem}[thm]{Remark}
\numberwithin{equation}{section}
\newcommand{\bN}{\mathbb{N}}
\newcommand{\lin}{\operatorname{span}}
\newcommand{\supp}{\operatorname{supp}}
\newcommand{\dist}{\operatorname{dist}}
\newcommand{\dif}{\,\mathrm{d}}
\newcommand{\cT}{\mathcal{T}}
\newcommand{\cM}{\mathcal{M}}
\newcommand{\bC}{\mathbb{C}}
\DeclareMathOperator{\polyfun}{n}
\newcommand{\charfun}{\ensuremath{\mathbbm 1}}
\DeclareMathOperator{\card}{card}
\begin{document}
\title{Unconditionality of orthogonal spline systems in $L^p$}
\author[M. Passenbrunner]{Markus Passenbrunner}
\address{Institute of Analysis, Johannes Kepler University Linz, Austria, 4040 Linz, Altenberger Strasse 69}
\email{markus.passenbrunner@jku.at}
\keywords{orthonormal spline system, unconditional basis, $L^p$}
\subjclass[2010]{42C10, 46E30}
\date{\today}
\begin{abstract}
Given any natural number $k$ and any dense point sequence $(t_n)$, we prove that the corresponding orthonormal spline system of order $k$ is an unconditional basis in reflexive $L^p$.
\end{abstract}
\maketitle

\section{Introduction}
In this work, we are concerned with orthonormal spline systems of arbitrary order $k$ with arbitrary partitions. We let $(t_n)_{n=2}^\infty$ be a dense sequence of points in the open unit interval such that each point occurs at most $k$ times. Moreover, define $t_0:=0$ and $t_1:=1$. Such point sequences are called \emph{admissible}. For $n\geq 2$, we define $\mathcal S_n^{(k)}$ to be the space of polynomial splines of order $k$ with grid points $(t_j)_{j=0}^n$, where the points $0$ and $1$ both have multiplicity $k$. For each $n\geq 2$, the space $\mathcal S_{n-1}^{(k)}$ has codimension $1$ in $\mathcal S_{n}^{(k)}$ and, therefore, there exists a function $f_{n}^{(k)}\in \mathcal S_{n}^{(k)}$ that is orthonormal to the space $\mathcal S_{n-1}^{(k)}$. Observe that this function $f_{n}^{(k)}$ is unique up to sign. In addition, let $(f_{n}^{(k)})_{n=-k+2}^1$ be the collection of orthonormal polynomials in $L^2[0,1]$ such that the degree of $f_n^{(k)}$ is $k+n-2$. The system of functions $(f_{n}^{(k)})_{n=-k+2}^\infty$ is called \emph{orthonormal spline system of order $k$ corresponding to the sequence  $(t_n)_{n=0}^\infty$}. We will frequently omit the parameter $k$ and write $f_n$ instead of $f_{n}^{(k)}$. The purpose of this article is to prove the following
\begin{thm}\label{thm:uncond}
Let $k\in\mathbb{N}$ and $(t_n)_{n\geq 0}$ be an admissible sequence of knots in $[0,1]$. Then the corresponding general orthonormal spline system of order $k$ is an unconditional basis in $L^p[0,1]$ for every $1<p<\infty$.
\end{thm}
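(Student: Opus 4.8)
The plan is to follow the classical Littlewood--Paley route for proving unconditionality of a basis in $L^p$: reduce the problem to an $L^p$-bound for the square function associated with the orthonormal spline system, and then obtain that bound by a Calder\'on--Zygmund argument. Concretely, for a finitely supported sequence $\seq{a}=(a_n)$ write $Tf=\sum_n \varepsilon_n a_n f_n$ for the multiplier operator with $\pm1$ signs $\varepsilon_n$ acting on $f=\sum_n a_n f_n$; unconditionality in $L^p$ for $1<p<\infty$ is equivalent to the uniform bound $\|Tf\|_p\le C_p\|f\|_p$ over all sign choices, and by Khintchine's inequality this in turn is equivalent to the square function estimate $\|(\sum_n |a_n f_n|^2)^{1/2}\|_p \simeq \|f\|_p$. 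Since $L^2$-orthonormality gives the case $p=2$ for free, the whole matter comes down to proving the two-sided inequality for one $p\ne2$ and then invoking duality and interpolation.

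The core of the argument is therefore to show that the square function operator $Sf=(\sum_n|\langle f,f_n\rangle f_n|^2)^{1/2}$, or equivalently the operators $T$ above, are of weak type $(1,1)$, which by Marcinkiewicz interpolation with the trivial $L^2$ bound yields $L^p$ for $1<p\le2$, and then duality handles $2\le p<\infty$. To run the Calder\'on--Zygmund machinery I would regard $T$ as an integral operator with kernel $K(x,y)=\sum_n \varepsilon_n f_n(x)f_n(y)$ and verify a H\"ormander-type condition $\int_{|x-y|>2|y-y'|}|K(x,y)-K(x,y')|\dif x \le C$ uniformly in $y,y'$ and in the signs. The key analytic inputs here are the pointwise exponential decay estimates for the orthonormal spline functions away from their "center" (an interval of the partition near which $f_n$ is concentrated), together with the fact that consecutive spaces $\cS_{n-1}^{(k)}\subset\cS_n^{(k)}$ differ by one knot insertion, so that $f_n$ is, up to the exponential tail, localized at scale comparable to the length of the newly created grid interval. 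These decay bounds — which must be uniform in $k$ and in the (arbitrary, possibly very irregular) partition — I would take from the earlier sections of the paper.

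The main obstacle, and the reason the result is nontrivial despite the clean Littlewood--Paley reduction, is that the partitions are completely arbitrary: the grid intervals can have wildly varying and non-comparable lengths, points may be repeated up to $k$ times, and there is no doubling or regularity assumption on the underlying measure-theoretic structure of the partition. Consequently the naive geometric series that would bound the H\"ormander integral does not obviously converge, and one must instead group the indices $n$ according to the scale and location of the associated grid interval and sum the exponentially decaying contributions carefully, controlling how many spline functions "live" at a given scale over a given region. This is exactly where the sharp, partition-uniform exponential estimates for $f_n$ and for the de Boor--Fix type dual functionals do the heavy lifting; combining them with the nesting structure of the $\cS_n^{(k)}$ and a stopping-time decomposition of $[0,1]$ adapted to the knot sequence is, I expect, the technical heart of the proof. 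Once the H\"ormander condition is in hand, the weak-$(1,1)$ bound, Marcinkiewicz interpolation, duality, and Khintchine combine routinely to give Theorem~\ref{thm:uncond}.
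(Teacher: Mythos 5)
Your high-level framing is the standard one and matches the paper's opening move: by Khintchine, unconditionality of $(f_n)$ in $L^p$ is equivalent to $\|Sf\|_p\sim_p\|f\|_p$, and it suffices to handle $1<p<2$ and dualize. Beyond that, however, the route you propose diverges from what the paper does in a way that I think exposes a genuine gap.

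You want to treat $T_\varepsilon f=\sum_n\varepsilon_n a_n f_n$ as a Calder\'on--Zygmund operator with kernel $K(x,y)=\sum_n\varepsilon_n f_n(x)f_n(y)$, verify a H\"ormander condition $\int_{|x-y|>2|y-y'|}|K(x,y)-K(x,y')|\,dx\lesssim 1$, and deduce weak $(1,1)$. This is unlikely to go through for arbitrary admissible partitions, because the localization of $f_n$ is \emph{combinatorial}, not metric: Lemma \ref{lem:lporthspline} gives decay of the form $\gamma^{d_n(x)}$, where $d_n(x)$ counts grid points between $x$ and the characteristic interval $J_n$, with no lower bound on how fast $d_n$ grows in $|x-y|$. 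For badly clustered knots, the kernel has no usable metric regularity with respect to Lebesgue measure, and the smoothness of $f_n$ on a grid interval $[\tau_{j-1},\tau_j]$ is only controlled by $1/|\tau_j-\tau_{j-1}|$, which is unbounded. It is telling that the paper does \emph{not} prove weak $(1,1)$: instead it establishes weak $(p,p)$ for each $1<p<2$ separately (Section \ref{sec:main}), which is all that Marcinkiewicz interpolation with $L^2$ needs, and the technical estimates in Lemma \ref{lem:techn2} use $p>1$ essentially (e.g.\ summability of $\sum|J_n|^{1-p}$ and factors $|V|^{p-1}$).

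The paper's actual mechanism is a Calder\'on--Zygmund decomposition of the \emph{function}, not of the kernel: $f=h+g$ with $G_\lambda=[\mathcal Mf>\lambda]$, $g=\sum_j g_j$ where each $g_j$ is supported in a Whitney interval $V_j$ of $G_\lambda$ and annihilates polynomials of order $k$ on $V_j$. This vanishing-moment structure, combined with the exponential decay of $\langle g_j,f_n\rangle$ in the combinatorial distance $d_n(V_j)$, is what replaces kernel regularity; the summation is controlled by the characteristic-interval machinery of Sections \ref{sec:proporth}--\ref{sec:comb}, in particular Lemma \ref{lem:jinterval}, which bounds how many $n$ can share essentially the same $J_n$. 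You gesture at this (``grouping the indices $n$ according to scale and location,'' ``stopping-time decomposition adapted to the knot sequence''), but the H\"ormander-condition framing obscures the point that the cancellation lives in the CZ pieces $g_j$, not in the kernel.

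Finally, your proposal does not really address the reverse inequality $\|f\|_p\lesssim_p\|Sf\|_p$, which the paper has to prove directly for $1<p<2$ (it does not follow from $\|Sf\|_p\lesssim\|f\|_p$ alone). The paper handles it via a good-$\lambda$ argument using the level sets $E_\lambda=[Sf>\lambda]$ and the maximal bound $Mf\lesssim\mathcal Mf$ from Theorem \ref{thm:maxbound}, together with Lemmas \ref{lem:techn1} and \ref{lem:Sf}. That half of the argument is entirely missing from your outline.
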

A celebrated result of A. Shadrin \cite{Shadrin2001} states that the orthogonal projection operator onto the space $\mathcal S_n^{(k)}$ is bounded on $L^\infty[0,1]$ by a constant that depends only on the spline order $k$.
As a consequence, $(f_n)_{n\geq -k+2}$ is a basis in $L^p[0,1],$ $1\leq p<\infty$.  
There are various results on the unconditionality of spline systems restricting either the spline order $k$ or the partition $(t_n)_{n\geq 0}$. The first result in this direction is \cite{Bockarev1975}, who proves  that the classical Franklin system---that is orthonormal spline systems of order $2$ corresponding to dyadic knots---is an unconditional basis in $L^p[0,1],\ 1<p<\infty$. This argument was extended in \cite{Ciesielski1975} to prove unconditionality of orthonormal spline systems of arbitrary order, but still restricted to dyadic knots. Considerable effort has been made in the past to weaken the restriction to dyadic knot sequences. In the series of papers \cite{GevorkyanKamont1998, GevorkyanSahakian2000, GevKam2004} this restriction was removed step-by-step for general Franklin systems, with the final result that it was shown for each admissible point sequence $(t_n)_{n\geq 0}$ with parameter $k=2$, the associated general Franklin system forms an unconditional basis in $L^p[0,1]$, $1<p<\infty$. We combine the methods used in \cite{GevorkyanSahakian2000, GevKam2004} with some new inequalities from \cite{PassenbrunnerShadrin2013} to prove that orthonormal spline systems are unconditional in $L^p[0,1],$ $1<p<\infty$, for any spline order $k$ and any admissible point sequence $(t_n)_{n\geq 0}$.

The organization of the present article is as follows. In Section \ref{sec:prel}, we give some preliminary results 
concerning polynomials and splines. Section \ref{sec:proporth} develops some estimates for the orthonormal spline functions $f_n$ using the crucial notion of associating to each function $f_n$ a characteristic interval $J_n$ in a delicate way. Section \ref{sec:comb} treats a central combinatorial result concerning the cardinality of indices $n$ such that a given grid interval $J$ can be a characteristic interval of $f_n$. In Section \ref{sec:techn} we prove a few technical lemmata used in the proof of Theorem \ref{thm:uncond} and Section \ref{sec:main} finally proves Theorem \ref{thm:uncond}. 
We remark that the results and proofs in Sections \ref{sec:techn} and \ref{sec:main} follow closely \cite{GevKam2004}. 
\section{Preliminaries}\label{sec:prel}
Let $k$ be a positive integer. The parameter $k$ will always be used for the order of the underlying polynomials or splines. We use the notation $A(t)\sim B(t)$ to indicate the existence of two constants $c_1,c_2>0$ that depend only on $k$, such that $c_1 B(t)\leq A(t)\leq c_2 B(t)$ for all $t$, where $t$ denotes all implicit and explicit dependences that the expressions $A$ and $B$ might have. If the constants $c_1,c_2$ depend on an additional parameter $p$, we write this as $A(t)\sim_p B(t)$. Correspondingly, we use the symbols $\lesssim,\gtrsim,\lesssim_p,\gtrsim_p$.  For a subset $E$ of the real line, we denote by $|E|$ the Lebesgue measure of $E$ and by $\charfun_E$ the characteristic function of $E$.

First, we recall a few elementary properties of polynomials.
\begin{prop}\label{prop:poly}
Let $0<\rho<1$. Let $I$ be an interval and $A\subset I$ be a subset of $I$ with $|A|\geq \rho |I|$. Then, for every polynomial $Q$ of order $k$ on $I$,
\[
\max_{t\in I}|Q(t)|\lesssim_\rho \sup_{t\in A}|Q(t)|\qquad\text{and}\qquad \int_I |Q(t)|\dif t\lesssim_\rho \int_A |Q(t)|\dif t.
\]
\end{prop}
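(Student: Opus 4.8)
The plan is to reduce both inequalities to a single normalization-and-compactness argument on a fixed reference interval, so the constants depend only on $\rho$ and $k$. First I would observe that both statements are affine-invariant: if $\phi\colon [0,1]\to I$ is the increasing affine bijection, then $Q\circ\phi$ is again a polynomial of order $k$, the set $\phi^{-1}(A)\subset[0,1]$ still has measure at least $\rho$, the sup over $I$ equals the sup over $[0,1]$ of $Q\circ\phi$, and $\int_I|Q| = |I|\int_0^1 |Q\circ\phi|$ with the analogous identity for $A$. Hence it suffices to prove the two inequalities for $I=[0,1]$ with constants depending only on $\rho$ and $k$.

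Next I would handle the sup-estimate. Consider the finite-dimensional space $\mathsf{P}_k$ of polynomials of order $k$ (degree $<k$) on $[0,1]$; all norms on it are equivalent. For a fixed measurable $A\subset[0,1]$ with $|A|\ge\rho$, the map $Q\mapsto \sup_{t\in A}|Q(t)|$ is a norm on $\mathsf{P}_k$: it is clearly a seminorm, and it vanishes only if $Q$ vanishes on a set of positive measure, hence $Q\equiv 0$. Therefore there is a constant $C(A,k)$ with $\max_{[0,1]}|Q|\le C(A,k)\sup_A|Q|$. The remaining point is that $C$ can be chosen uniformly over all admissible $A$. For this I would argue by contradiction: if not, pick $Q_j$ with $\max_{[0,1]}|Q_j|=1$ and sets $A_j$, $|A_j|\ge\rho$, with $\sup_{A_j}|Q_j|\to 0$. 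By compactness of the unit sphere of $\mathsf{P}_k$, pass to a subsequence with $Q_j\to Q$ uniformly, $\max_{[0,1]}|Q|=1$. For each $\varepsilon>0$, the set $\{|Q|\le\varepsilon\}$ must have measure at least $\rho-o(1)\ge \rho/2$ eventually (since $|Q_j-Q|<\varepsilon/2$ uniformly and $|\{|Q_j|<\varepsilon/2\}|\ge|A_j|\ge\rho$ for large $j$), hence $|\{|Q|=0\}|\ge\rho/2>0$, forcing $Q\equiv 0$, a contradiction. This gives the uniform constant $c_\rho$.

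For the integral estimate, I would deduce it from the sup-estimate applied on a well-chosen subinterval. Let $Q$ be a polynomial of order $k$ on $[0,1]$ with $|A|\ge\rho$. Choose a subinterval $I'\subset[0,1]$ with $|I'\cap A|\ge\frac{\rho}{2}|I'|$ — such an interval exists because otherwise a Vitali/Lebesgue-density covering argument would force $|A|<\rho$; concretely one may take $I'=[0,1]$ itself if $|A|\ge\frac{\rho}{2}$, which it is. Then $\int_{[0,1]}|Q|\le \max_{[0,1]}|Q| \lesssim_\rho \sup_{A}|Q|$, but to get the integral on the right I instead argue: on $[0,1]$, $\int_0^1|Q|\le\max_{[0,1]}|Q|\lesssim_\rho\max_{[0,1]}|Q|\cdot|A|/\rho$, and by the sup-estimate $\max_{[0,1]}|Q|\lesssim_\rho \sup_A|Q|$, while also $\sup_A|Q|\lesssim_k \frac1{|A|}\int_A|Q|$ is \emph{false} in general — so instead I would run the compactness argument directly for the functional $Q\mapsto \int_A|Q|$, which by the same reasoning as above is a norm on $\mathsf{P}_k$ bounded below uniformly over $A$ with $|A|\ge\rho$, yielding $\int_0^1|Q|\sim \max_{[0,1]}|Q|\lesssim_\rho \int_A|Q|$.

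The main obstacle is making the constant uniform over all admissible sets $A$, not just a fixed one; this is exactly what the compactness-plus-contradiction argument in the second paragraph takes care of, using that the unit ball of the finite-dimensional space $\mathsf{P}_k$ is compact and that a nonzero polynomial cannot vanish on a set of positive Lebesgue measure. Everything else — affine invariance, equivalence of norms on $\mathsf{P}_k$ — is routine.
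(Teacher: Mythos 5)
The paper states Proposition~\ref{prop:poly} as a recalled elementary fact and gives no proof of its own, so there is nothing to compare against; the question is simply whether your argument is sound, and it is. The compactness-and-contradiction scheme (affine reduction to $[0,1]$, observe that $Q\mapsto\sup_A|Q|$ and $Q\mapsto\int_A|Q|$ are norms on the finite-dimensional space $\pp_k$, then extract a uniformly convergent subsequence from a hypothetical sequence of counterexamples and show the limit polynomial vanishes on a set of positive measure) is a standard qualitative Remez-type argument and does give constants depending only on $k$ and $\rho$.

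Two small expository points worth tightening. First, in the sup-estimate you write that $|\{|Q|\le\varepsilon\}|\ge\rho-o(1)\ge\rho/2$ ``eventually''; since $\varepsilon$ is fixed while $j\to\infty$ you in fact get the cleaner bound $|\{|Q|<\varepsilon\}|\ge\rho$ for every $\varepsilon>0$, and then $|\{Q=0\}|\ge\rho$ by monotone convergence of the measures. Second, the phrase ``by the same reasoning as above'' for the functional $Q\mapsto\int_A|Q|$ hides one extra step: from $\int_{A_j}|Q|\to 0$ you need Chebyshev's inequality, $|\{t\in A_j:|Q(t)|>\delta\}|\le\delta^{-1}\int_{A_j}|Q|$, to conclude $|\{|Q|\le\delta\}|\ge\rho/2$ for large $j$ and every $\delta>0$. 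This is routine, but it is a different mechanism than in the sup case, and spelling it out would also let you delete the false starts in the middle of that paragraph (trying to route through $\sup_A|Q|\lesssim|A|^{-1}\int_A|Q|$, which you correctly flag as false). Alternatively, one can avoid the second compactness argument entirely by applying the sup-estimate to $A':=\{t\in A:|Q(t)|\le 2|A|^{-1}\int_A|Q|\}$, which by Chebyshev has $|A'|\ge|A|/2\ge\rho/2$; then $\max_{[0,1]}|Q|\lesssim_{\rho}\sup_{A'}|Q|\le 2|A|^{-1}\int_A|Q|\le 2\rho^{-1}\int_A|Q|$, and $\int_0^1|Q|\le\max_{[0,1]}|Q|$ finishes. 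Either way the proposition is proved.
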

\begin{lem}\label{lem:proj}
Let $V$ be an open interval and $f$ be a function satisfying $\int_V |f(t)|\dif t\leq \lambda |V|$ for some $\lambda>0$. Then, denoting by $T_V f$ the orthogonal projection of the function $f\cdot\charfun_V$ onto the space of polynomials of order $k$ on $V$,
\begin{equation}\label{eq:polyproj1}
\|T_V f\|_{L^2(V)}^2\lesssim \lambda^2 |V|.
\end{equation}
Moreover,
\begin{equation}\label{eq:polyproj2}
\|T_V f\|_{L^p(V)}\lesssim \|f\|_{L^p(V)},\qquad 1\leq p\leq \infty.
\end{equation}
\end{lem}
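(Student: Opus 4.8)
The plan is to exploit the fact that $T_V$ is an orthogonal projection onto a finite-dimensional space of polynomials, together with a rescaling to the unit interval so that all estimates reduce to a single fixed space of polynomials of order $k$. First I would prove \eqref{eq:polyproj1}. Since $T_V f$ is the $L^2(V)$-orthogonal projection of $f\charfun_V$ onto the polynomials of order $k$ on $V$, one has the self-adjointness identity $\|T_V f\|_{L^2(V)}^2 = \int_V f\cdot (T_V f)\dif t$. Hence $\|T_V f\|_{L^2(V)}^2 \le \|T_V f\|_{L^\infty(V)}\int_V |f(t)|\dif t \le \lambda |V|\,\|T_V f\|_{L^\infty(V)}$. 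It therefore suffices to bound $\|T_V f\|_{L^\infty(V)}$ by $\lambda^{1/2}|V|^{-1/2}\|T_V f\|_{L^2(V)}$ up to a constant depending only on $k$: this is the standard Markov-type comparison of $L^\infty$ and normalized $L^2$ norms on a fixed finite-dimensional polynomial space, transported to $V$ by the affine change of variables mapping $V$ to $[0,1]$. Combining the two displays gives $\|T_V f\|_{L^2(V)}^2 \lesssim \lambda |V|^{1/2}\|T_V f\|_{L^2(V)}$, and dividing through (the case $T_V f = 0$ being trivial) yields $\|T_V f\|_{L^2(V)} \lesssim \lambda |V|^{1/2}$, which is \eqref{eq:polyproj1}.

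For \eqref{eq:polyproj2}, I would again rescale $V$ to $[0,1]$, so that $T_V$ becomes the fixed orthogonal projection $\Pi$ onto polynomials of order $k$ on $[0,1]$, with reproducing kernel $K(s,t) = \sum_{j} p_j(s)p_j(t)$ for an orthonormal basis $(p_j)_{j=1}^k$ of that space. Since each $p_j$ is a fixed polynomial, $\sup_{s,t\in[0,1]}|K(s,t)| \le C_k < \infty$. Then for $1 \le p \le \infty$, $\Pi$ is an integral operator with bounded kernel on a probability space, so by Schur's test (or directly: $\|\Pi g\|_{L^p[0,1]} \le \big(\sup_s\int |K(s,t)|\dif t\big)^{1/p'}\big(\sup_t\int |K(s,t)|\dif s\big)^{1/p}\|g\|_{L^p[0,1]} \le C_k \|g\|_{L^p[0,1]}$), the bound $\|\Pi g\|_{L^p[0,1]} \lesssim \|g\|_{L^p[0,1]}$ holds with a constant depending only on $k$. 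Since the $L^p$-operator norm of $T_V$ acting on functions supported in $V$ is invariant under the affine rescaling (the Jacobian factors cancel because $p$-norms and the projection both transform homogeneously), this gives \eqref{eq:polyproj2}.

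The only mild subtlety — and the step I would be most careful about — is bookkeeping the scaling factors: under the map $\phi\colon[0,1]\to V$, a function $g$ on $V$ corresponds to $g\circ\phi$ on $[0,1]$, orthogonal projection commutes with this substitution, but $\|g\|_{L^p(V)}^p = |V|\,\|g\circ\phi\|_{L^p[0,1]}^p$ and $\|g\|_{L^2(V)}^2 = |V|\,\|g\circ\phi\|_{L^2[0,1]}^2$, and the hypothesis $\int_V|f| \le \lambda|V|$ becomes $\int_0^1 |f\circ\phi| \le \lambda$. Once these are tracked, \eqref{eq:polyproj1} follows from the normalized-polynomial inequality on $[0,1]$ and \eqref{eq:polyproj2} follows from the bounded-kernel estimate on $[0,1]$, both with $k$-dependent constants only. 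Alternatively, \eqref{eq:polyproj1} can be read off from \eqref{eq:polyproj2} at $p=2$ combined with the hypothesis and a single application of Proposition \ref{prop:poly}-type reasoning, but the direct self-adjointness argument above is cleaner.
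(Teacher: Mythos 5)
Your approach is correct, and it is slightly different in presentation from the paper's. The paper writes $T_V f$ explicitly in the (rescaled) Legendre basis $l_j^V$ and reads off both estimates by H\"older's inequality together with the bounds $\|l_j^V\|_{L^p(V)}\lesssim |V|^{1/p-1/2}$; you instead use the abstract self-adjointness identity $\|T_V f\|_{L^2(V)}^2=\int_V f\,(T_Vf)$ together with the Nikolskii inequality $\|P\|_{L^\infty(V)}\lesssim |V|^{-1/2}\|P\|_{L^2(V)}$ for the first estimate, and a bounded-kernel/Schur-test argument for the second. Both routes ultimately exploit the same two facts --- finite-dimensionality of the polynomial space and affine invariance --- so they are close cousins; the paper's Legendre computation makes the constants transparent, while your version avoids choosing a basis for \eqref{eq:polyproj1} and makes the $p$-uniformity of \eqref{eq:polyproj2} visible at a glance.

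One slip in wording: you write that it suffices to bound $\|T_V f\|_{L^\infty(V)}$ by $\lambda^{1/2}|V|^{-1/2}\|T_V f\|_{L^2(V)}$. That inequality is not homogeneous (both sides are degree $1$ in $f$, but $\lambda^{1/2}$ adds an extra half-degree) and is not the Nikolskii bound; what you actually need and then use in the next line is the honest Nikolskii inequality $\|T_V f\|_{L^\infty(V)}\lesssim |V|^{-1/2}\|T_V f\|_{L^2(V)}$, with no $\lambda^{1/2}$. Since the chain you write afterwards ($\|T_Vf\|_{L^2(V)}^2\lesssim\lambda|V|^{1/2}\|T_Vf\|_{L^2(V)}$) is the one that follows from the corrected inequality, this is a typo rather than a gap, but it should be fixed.
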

\begin{proof}
Let $l_j$, $0\leq j\leq k-1$ be the $j$-th Legendre polynomial on $[-1,1]$ with the normalization $l_j(1)=1$. It is a consequence of the integral identity
\[
l_j(x)=\frac{1}{\pi}\int_0^\pi \Big(x+\sqrt{x^2-1}\cos\varphi\Big)^j\dif \varphi,\quad x\in\bC\setminus\{-1,1\},
\]
that $l_j$ is uniformly bounded by $1$ on the interval $[-1,1]$. We have the orthogonality relation
\begin{equation}\label{eq:legendre}
\int_{-1}^1 l_i(x) l_j(x) \dif x=\frac{2}{2j+1}\delta(i,j),\qquad 0\leq i,j\leq k-1,
\end{equation}
where $\delta(\cdot,\cdot)$ denotes the Kronecker delta. Now let $\alpha:=\inf V$ and $\beta:=\sup V$. For
\[
l_j^V(x):=2^{1/2}|V|^{-1/2} l_j\Big(\frac{2x-\alpha-\beta}{\beta-\alpha}\Big),\quad x\in [\alpha,\beta],
\]
relation \eqref{eq:legendre} still holds for the sequence $(l_j^V)_{j=0}^{k-1}$, that is
\[
\int_\alpha^\beta l_i^V(x) l_j^V(x) \dif x=\frac{2}{2j+1}\delta(i,j),\qquad 0\leq i,j\leq k-1.
\]
So, $T_Vf$ can be represented in the form
\[
T_Vf=\sum_{j=0}^{k-1} \frac{2j+1}{2}\langle f,l_j^V\rangle l_j^V.
\]
Thus we obtain
\begin{align*}
\|T_V f\|_{L^2(V)}&\leq \sum_{j=0}^{k-1}\frac{2j+1}{2}|\langle f,l_j^V \rangle| \|l_j^V\|_{L^2(V)}=\sum_{j=0}^{k-1}\sqrt{\frac{2j+1}{2}}|\langle f,l_j^V\rangle| \\
&\leq \|f\|_{L^1(V)}\sum_{j=0}^{k-1}\sqrt{\frac{2j+1}{2}}\|l_j^V\|_{L^\infty(V)}\lesssim \|f\|_{L^1(V)}|V|^{-1/2},
\end{align*}
Now, \eqref{eq:polyproj1} is a consequence of the assumption $\int_V|f(t)|\dif t\leq \lambda|V|$. If we set $p'=p/(p-1)$, the second inequality \eqref{eq:polyproj2} follows from
\begin{align*}
\|T_Vf\|_{L^p(V)}\leq \sum_{j=0}^{k-1}\frac{2j+1}{2}\|f\|_{L^p(V)}\|l_j^V\|_{L^{p'}(V)}\|l_j^V\|_{L^p(V)}\lesssim \|f\|_{L^p(V)},
\end{align*}
since $\|l_j^V\|_{L^p(V)}\lesssim |V|^{1/p-1/2}$ for $0\leq j\leq k-1$ and $1\leq p\leq \infty$.
\end{proof}

We now let
\begin{equation}\label{eq:part}
\mathcal T=(0=\tau_1=\dots=\tau_k<\tau_{k+1}\leq\dots\leq\tau_M<\tau_{M+1}=\dots=\tau_{M+k}=1)
\end{equation}
be a partition of $[0,1]$ consisting of knots of multiplicity at most $k$, that means $\tau_i<\tau_{i+k}$ for all $1\leq i\leq M$. Let $\mathcal{S}_{\mathcal T}^{(k)}$ be the space of polynomial splines of order $k$ with knots $\mathcal T$.  The basis of $L^\infty$-normalized B-spline functions in $\mathcal{S}_{\mathcal{T}}^{(k)}$ is denoted by $(N_{i,k})_{i=1}^M$ or for short $(N_{i})_{i=1}^M$. Corresponding to this basis, there exists a biorthogonal basis of $\mathcal{S}_{\mathcal{T}}^{(k)}$, which is denoted by $(N_{i,k}^*)_{i=1}^M$ or $(N_{i}^*)_{i=1}^M$.
Moreover, we write $\nu_i = \tau_{i+k}-\tau_i$. We continue with recalling a few important results for B-splines $N_i$ and their dual functions $N_i^*$.

\begin{prop}\label{prop:lpstab} 
Let $1\leq p\leq \infty$ and $g=\sum_{j=1}^M a_j N_j$. Then,
\begin{equation}\label{eq:lpstab}
|a_j|\lesssim |J_j|^{-1/p}\|g\|_{L^p(J_j)},\qquad 1\leq j\leq M,
\end{equation}
where $J_j$ is the subinterval $[\tau_i,\tau_{i+1}]$ of $[\tau_j,\tau_{j+k}]$ of maximal length. Additionally,
\begin{equation}\label{eq:deboorlpstab}
\|g\|_p\sim \Big(\sum_{j=1}^M |a_j|^p \nu_j\Big)^{1/p}=\| (a_j\nu_j^{1/p})_{j=1}^M\|_{\ell^p}.
\end{equation}
Moreover, if $h=\sum_{j=1}^M b_j N_j^*$,
\begin{equation}
\|h\|_p\lesssim\Big(\sum_{j=1}^M |a_j|^p \nu_j^{1-p}\Big)^{1/p}=\|(a_j\nu_j^{1/p-1})_{j=1}^M\|_{\ell^p}.
\label{eq:lpstabdual}
\end{equation}
\end{prop}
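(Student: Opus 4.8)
The plan is to derive all three estimates from standard properties of B-splines and their duals, the key quantitative input being the de Boor stability estimate $\|g\|_p \sim (\sum_j |a_j|^p \nu_j)^{1/p}$, which I would either cite from de Boor's work or reprove via a local argument. First I would establish \eqref{eq:lpstab}. Fix $j$ and let $J_j = [\tau_i,\tau_{i+1}]$ be the longest subinterval of $[\tau_j,\tau_{j+k}]$; since $N_j$ is supported in $[\tau_j,\tau_{j+k}]$, which is covered by at most $k$ subintervals, we have $|J_j| \gtrsim \nu_j$. On $J_j$ the restriction $g|_{J_j}$ is a polynomial of order $k$, and it is a classical fact (the Marsden identity / the dual functional construction) that the coefficient $a_j$ can be recovered as $\langle g, N_j^*\rangle$ with $N_j^*$ supported in $\supp N_j$; localizing the dual functional to a single knot interval shows $|a_j| \lesssim |J_j|^{-1} \int_{J_j} |g|$, and then Proposition \ref{prop:poly} together with Hölder's inequality on the polynomial $g|_{J_j}$ upgrades this to $|a_j| \lesssim |J_j|^{-1/p}\|g\|_{L^p(J_j)}$.

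Next, for \eqref{eq:deboorlpstab}, the lower bound $\|g\|_p \gtrsim (\sum_j |a_j|^p\nu_j)^{1/p}$ follows by raising \eqref{eq:lpstab} to the $p$-th power, multiplying by $\nu_j$, using $\nu_j \lesssim |J_j|$ and $|J_j| \le |\supp N_j|$, and summing, invoking finite overlap of the supports $\{[\tau_j,\tau_{j+k}]\}_j$ (each point lies in at most $k$ of them) to control $\sum_j \|g\|_{L^p(J_j)}^p$ by $\|g\|_p^p$. For the upper bound, I would use $\|N_j\|_{L^\infty}=1$ and $\|N_j\|_{L^1}\sim \nu_j$ (since $\int N_{j,k} = \nu_j/k$), write $|g(t)| \le \sum_j |a_j| N_j(t)$, and estimate $\|g\|_p$ by interpolating between the $p=1$ bound $\|g\|_1 \le \sum_j |a_j|\,\|N_j\|_1 \sim \sum_j |a_j|\nu_j$ and the $p=\infty$ bound $\|g\|_\infty \le \max_j |a_j|$ (the latter using that on any knot interval only $k$ of the $N_j$ are nonzero and they form a partition of unity up to a constant); a direct convexity/Hölder argument on each knot interval then yields $\|g\|_p^p \lesssim \sum_j |a_j|^p \nu_j$.

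Finally, \eqref{eq:lpstabdual} is the dual counterpart. The functions $N_j^*$ are $L^1$-normalized in the sense that $\|N_j^*\|_{L^1}\sim 1$ while $\|N_j^*\|_{L^\infty}\sim \nu_j^{-1}$, because $N_j^*$ is biorthogonal to the $L^\infty$-normalized B-spline $N_j$ and is supported in a set of measure $\sim\nu_j$; hence $\|N_j^*\|_{L^p}\sim \nu_j^{1/p-1}$. Writing $h=\sum_j b_j N_j^*$ — here I read the statement with $a_j$ in place of $b_j$, matching the displayed right-hand side — and repeating the interpolation argument of the previous paragraph with the roles of the normalizations adjusted gives $\|h\|_p \lesssim (\sum_j |a_j|^p \nu_j^{1-p})^{1/p}$. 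The main obstacle, and the only place genuine care is needed, is the lower bound in \eqref{eq:deboorlpstab}: one must show that cancellation among the $a_j N_j$ cannot make $\|g\|_p$ small relative to the coefficient sequence, and this is exactly what \eqref{eq:lpstab} provides once one checks the geometric facts $\nu_j \sim |J_j|$ up to the finite-overlap constant $k$; everything else reduces to partition-of-unity bookkeeping and Hölder's inequality.
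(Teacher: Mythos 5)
The paper does not actually prove this proposition; it cites \eqref{eq:lpstab} and \eqref{eq:deboorlpstab} as Lemmas 4.1 and 4.2 of DeVore--Lorentz (Chapter 5), and derives \eqref{eq:lpstabdual} from Shadrin's theorem on the uniform $L^\infty$-boundedness of the orthogonal projection onto $\mathcal{S}_{\mathcal T}^{(k)}$, referring to Ciesielski's Property P.7 for the deduction. Your sketch of \eqref{eq:lpstab} and \eqref{eq:deboorlpstab} follows the same standard de~Boor route and is fine in outline.

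The gap is in \eqref{eq:lpstabdual}. You assert that $N_j^*$ ``is supported in a set of measure $\sim \nu_j$,'' but this is false for the objects this paper calls $N_j^*$: they are the biorthogonal basis \emph{inside} the spline space $\mathcal{S}_{\mathcal T}^{(k)}$, i.e.\ $N_j^* = \sum_{\ell=1}^M b_{j\ell} N_\ell$ with $(b_{j\ell})$ the inverse Gram matrix, and as such they are global splines on all of $[0,1]$ (the inverse of a band matrix is generically dense). You are likely conflating them with de~Boor's compactly supported dual \emph{functionals}, which represent the coefficient map but do not lie in $\mathcal{S}_{\mathcal T}^{(k)}$. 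Because the $N_j^*$ overlap everywhere, the per-knot-interval finite-overlap bookkeeping that makes your upper bound in \eqref{eq:deboorlpstab} work has no analogue here; you need the exponential off-diagonal decay $|b_{j\ell}|\lesssim \gamma^{|j-\ell|}/(\tau_{\max(j,\ell)+k}-\tau_{\min(j,\ell)})$ (Theorem~\ref{thm:maintool}) or, equivalently, Shadrin's bounded-projection theorem. That is exactly the nontrivial input the paper cites, and your elementary argument as written does not establish it. (Incidentally, your reading of the typo $b_j$ vs.\ $a_j$ in the statement of \eqref{eq:lpstabdual} is the intended one.)
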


The two inequalites \eqref{eq:lpstab} and \eqref{eq:deboorlpstab} are Lemma 4.1 and Lemma 4.2 in \cite[Chapter 5]{DeVoreLorentz1993}, respectively. Inequality \eqref{eq:lpstabdual} is a consequence of the celebrated result of Shadrin \cite{Shadrin2001}, that the orthogonal projection operator onto $\mathcal S_{\mathcal{T}}^{(k)}$ is bounded on $L^\infty$ independently of $\mathcal T$. For a deduction of \eqref{eq:lpstabdual} from this result, see \cite[Property P.7]{Ciesielski2000}.

The next thing to consider are estimates for the inverse of the Gram matrix $(\langle N_{i,k},N_{j,k}\rangle)_{i,j=1}^{M}$. Before we do that, we recall the concept of totally positive matrices:
Let $Q_{m,n}$ the set of strictly increasing sequences of $m$ integers from the set $\{1,\dots,n\}$
and $A$ be an $n\times n$-matrix. For $\alpha,\beta\in Q_{m,n}$, we denote by $A[\alpha;\beta]$ the submatrix of $A$ consisting of the rows indexed by $\alpha$ and the columns indexed by $\beta$. Furthermore we let $\alpha'$ (the complement of $\alpha$) be the uniquely determined element of $Q_{n-m,n}$ that consists of all integers in $\{1,\dots,n\}$ not occurring in $\alpha$. In addition, we use the notation $A(\alpha;\beta):=A[\alpha';\beta']$. 
\begin{defin}
Let $A$ be an $n\times n$-matrix. $A$ is called \emph{totally positive}, if 
\begin{equation*}
\det A[\alpha;\beta]\geq 0,\quad \text{for }\alpha,\beta\in Q_{m,n},1\leq m\leq n.
\end{equation*}
\end{defin}
The cofactor formula $b_{ij}=(-1)^{i+j}\det A(j;i)/\det A$ for the inverse $B=(b_{ij})_{i,j=1}^M$ of the matrix $A$ leads to
\begin{prop}\label{prop:checkerboard}
Inverses $B=(b_{ij})$ of totally positive matrices $A=(a_{ij})$ have the checkerboard property. This means that
\begin{equation*}
(-1)^{i+j} b_{ij}\geq 0\quad \text{for all }i,j.
\end{equation*}
\end{prop}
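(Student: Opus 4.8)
The plan is to read the conclusion directly off the cofactor formula quoted just above the statement, using nothing beyond the definition of total positivity. First I would record that $A$ is invertible, so $\det A\neq 0$; since $A$ is totally positive, taking $m=n$ and $\alpha=\beta=(1,\dots,n)$ in the definition gives $\det A=\det A[\alpha;\beta]\geq 0$, whence in fact $\det A>0$.

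Next, fix indices $i,j\in\{1,\dots,n\}$ and consider the minor appearing in the cofactor formula, namely $\det A(j;i)=\det A[\{j\}';\{i\}']$. Here $\{j\}'$ and $\{i\}'$ are the complements in $\{1,\dots,n\}$ of the singletons $\{j\}$ and $\{i\}$, hence both belong to $Q_{n-1,n}$. Therefore $\det A[\{j\}';\{i\}']$ is one of the minors required to be nonnegative in the definition of total positivity, so $\det A(j;i)\geq 0$.

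Combining these two observations with $b_{ij}=(-1)^{i+j}\det A(j;i)/\det A$ yields
\[
(-1)^{i+j}b_{ij}=\frac{\det A(j;i)}{\det A}\geq 0,
\]
which is exactly the claimed checkerboard property. I expect no genuine obstacle here: the statement is a formal consequence of the definitions. The only points that need to be checked with a little care are the index bookkeeping — that deleting a single row and a single column produces a submatrix of the shape $A[\alpha;\beta]$ with $\alpha,\beta\in Q_{n-1,n}$, so that the total positivity hypothesis applies to it — and the upgrade of $\det A\geq 0$ to $\det A>0$ coming from invertibility, which is what makes the quotient well defined and keeps the sign under control.
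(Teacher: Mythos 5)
Your argument is correct and is exactly the route the paper takes: the paper states the cofactor formula $b_{ij}=(-1)^{i+j}\det A(j;i)/\det A$ immediately before the proposition and leaves the verification to the reader, which you carry out by noting that $\det A>0$ (total positivity plus invertibility) and that each complementary minor $\det A(j;i)=\det A[\{j\}';\{i\}']$ is nonnegative because $\{j\}',\{i\}'\in Q_{n-1,n}$. No gap, no divergence from the paper's approach.
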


\begin{thm}[\cite{deBoor1968}]
Let $k\in\mathbb{N}$ and $\mathcal{T}$ be an arbitrary partition of $[0,1]$ as in \eqref{eq:part}. Then the Gram matrix $A=(\langle N_{i,k},N_{j,k}\rangle)_{i,j=1}^{M}$ of the B-spline functions is totally positive.
\end{thm}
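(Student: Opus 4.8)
The plan is to prove that the Gram matrix $A=(\langle N_{i,k},N_{j,k}\rangle)_{i,j=1}^M$ of $L^\infty$-normalized B-splines is totally positive by exhibiting it as a product (or limit of products) of matrices that are themselves known to be totally positive, using the classical fact that the class of totally positive matrices is closed under multiplication (the Cauchy--Binet formula) and under pointwise limits. The starting point is the variation-diminishing property of B-splines: for any knot sequence $\mathcal{T}$ and any points $x_1\le\dots\le x_M$ with $N_i$ not identically zero at the relevant places, the collocation matrix $(N_{j}(x_i))_{i,j}$ is totally positive. This is a theorem of Karlin (or Schoenberg--Whitney together with the sign structure); I would quote it as the basic input.

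First I would write $\langle N_{i,k},N_{j,k}\rangle=\int_0^1 N_{i,k}(t)N_{j,k}(t)\dif t$ and approximate the integral by Riemann sums over a fine uniform partition $0=s_0<s_1<\dots<s_R=1$, so that
\begin{equation*}
\langle N_{i,k},N_{j,k}\rangle=\lim_{R\to\infty}\sum_{r=1}^R N_{i,k}(\xi_r)N_{j,k}(\xi_r)\,(s_r-s_{r-1}),
\end{equation*}
with $\xi_r\in(s_{r-1},s_r)$. In matrix form the $R$-th Riemann sum is $C^{\mathsf T} D C$, where $C=(N_{j,k}(\xi_r))_{r=1,\dots,R}^{j=1,\dots,M}$ is a collocation matrix and $D=\operatorname{diag}(s_r-s_{r-1})$ is diagonal with positive entries. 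A diagonal matrix with nonnegative entries is totally positive, $C$ is totally positive by the variation-diminishing property, and $C^{\mathsf T}$ is totally positive because transposition preserves total positivity; hence each Riemann-sum matrix $C^{\mathsf T}DC$ is totally positive by Cauchy--Binet applied twice. Since every minor $\det A[\alpha;\beta]$ is the limit of the corresponding minors of these Riemann-sum matrices, and each of those minors is $\ge 0$, we conclude $\det A[\alpha;\beta]\ge 0$ for all $\alpha,\beta\in Q_{m,M}$, $1\le m\le M$, which is exactly total positivity of $A$.

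An alternative, perhaps cleaner, route avoids Riemann sums: one uses the integral representation of the Gram matrix as a composition of the two kernels $K_1(t,i)=N_{i,k}(t)$ and $K_2(i,t)=N_{i,k}(t)$, both of which are totally positive kernels (again variation diminishing), and invokes the basic composition formula for totally positive kernels (Karlin, \emph{Total Positivity}, Chapter 3), which states that $\int K_1(t,i)K_2(j,t)\dif t$ is totally positive whenever $K_1$ and $K_2$ are. I would present the Riemann-sum argument as the main line since it is elementary and self-contained, and mention the kernel-composition formulation as a remark.

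The main obstacle is making sure the variation-diminishing / total positivity statement for the collocation matrices is invoked in a form that is valid for \emph{arbitrary} admissible knot sequences with knots of multiplicity up to $k$ — including the degenerate situations where some $N_{i,k}$ share large portions of support — and that the chosen evaluation points $\xi_r$ are generic enough that no spurious sign issues arise. This is handled by noting that the total positivity of the B-spline collocation matrix holds unconditionally (the determinants are genuinely $\ge 0$, with strict positivity governed by the Schoenberg--Whitney conditions), so nonnegativity of every minor — all that we need — survives the limit without any genericity assumption. Everything else is bookkeeping with Cauchy--Binet and the continuity of determinants.
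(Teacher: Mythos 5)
Your proposal is correct. The paper itself does not give a self-contained argument: it simply cites two facts from Karlin's \emph{Total Positivity}, namely the basic composition formula and the total positivity of the B-spline kernel $(i,x)\mapsto N_{i,k}(x)$, and concludes. Your main line of proof (Riemann sums turning the Gram matrix into a limit of products $C^{\mathsf T} D C$ of collocation matrices and diagonal matrices, Cauchy--Binet to propagate total positivity through the product, and continuity of minors to pass to the limit) is essentially an elementary, hands-on proof of the very composition formula that the paper cites, specialized to the case at hand; your ``alternative, cleaner route'' is literally what the paper does. So the underlying mathematics is the same, but you unpack the cited machinery, which makes the argument more self-contained at the cost of a longer exposition. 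One small caveat worth making explicit if you write this up: you should verify (or cite) that total positivity of the collocation matrix $(N_{j,k}(\xi_r))_{r,j}$ is meant in the weak sense --- all minors $\ge 0$ --- which holds for any choice of evaluation points and any admissible knot multiplicities; the Schoenberg--Whitney conditions only govern \emph{strict} positivity and are not needed here, as you correctly note.
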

This theorem is a consequence of the so called basic composition formula \cite[Chapter 1, Equation (2.5)]{Karlin1968} and the fact that the kernel $N_{i,k}(x)$, depending on the variables $i$ and $x$, is totally positive \cite[Theorem 4.1, Chapter 10]{Karlin1968}. As a consequence, the inverse $B=(b_{ij})_{i,j=1}^M$ of $A$ possesses the checkerboard property by Proposition \ref{prop:checkerboard}.

\begin{thm}[\cite{PassenbrunnerShadrin2013}]\label{thm:maintool}
Let $k\in\mathbb{N}$, the partition $\mathcal T$ be defined as in \eqref{eq:part} and $(b_{ij})_{i,j=1}^M$ the inverse of the Gram matrix $(\langle N_{i,k},N_{j,k}\rangle)_{i,j=1}^{M}$ of B-spline functions $N_{i,k}$ of order $k$ corresponding to the partition $\mathcal T$. Then, 
\[
|b_{ij}|\leq C \frac{\gamma^{|i-j|}}{\tau_{\max(i,j)+k}-\tau_{\min(i,j)}},\qquad 1\leq i,j\leq M,
\]
where the constants $C>0$ and $0<\gamma<1$ depend only on the spline order $k$.
\end{thm}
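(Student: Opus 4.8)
The plan is to reduce the global off-diagonal decay to two ingredients: a \emph{uniform $\ell^\infty \to \ell^\infty$ bound} on the rows of the inverse Gram matrix, and a \emph{local perturbation / determinant argument} that converts that uniform bound into geometric decay away from the diagonal. For the first ingredient, recall that $A = (\langle N_i, N_j\rangle)$ and that $\langle N_i, N_j\rangle = 0$ unless $|i-j|\le k-1$, so $A$ is banded with bandwidth $k$. The Shadrin bound (used already for \eqref{eq:lpstabdual}) says the orthogonal projection onto $\mathcal S_{\mathcal T}^{(k)}$ is bounded on $L^\infty$ uniformly in $\mathcal T$; unwinding this in the dual B-spline basis $(N_i^*)$ gives, for the dual Gram matrix, a bound of the form $\sum_j |b_{ij}|\,\|N_j\|_\infty\,\|\text{(something)}\|\lesssim 1$, which after accounting for the normalizations $\|N_i\|_1 \sim \nu_i$ yields the diagonal-scale estimate $|b_{ii}|\sim \nu_i^{-1}$ together with $\sum_j |b_{ij}|\,\nu_j \lesssim 1$ for each fixed $i$ (and the transpose statement). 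This is the quantitative input that replaces the ``abstract'' boundedness.

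For the second ingredient, I would exploit that $A$ is banded \emph{and} totally positive. Totally positivity gives the checkerboard sign pattern of $B=(b_{ij})$ by Proposition \ref{prop:checkerboard}; more importantly, for banded totally positive (indeed, for banded positive-definite) matrices one has the classical phenomenon that the inverse decays exponentially off the diagonal. The cleanest route is via the Neumann-type / Demko--Moss--Smith argument: write $A = D^{1/2}(I - K)D^{1/2}$ with $D$ the diagonal and show $\|K\|\le q<1$ in a suitable weighted operator norm, so that $B = D^{-1/2}\sum_{m\ge 0} K^m D^{-1/2}$ and the banded structure of $K$ (bandwidth $k$) forces $K^m$ to vanish for $|i-j|>mk$, producing the factor $\gamma^{|i-j|}$ with $\gamma = q^{1/k}$. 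The denominator $\tau_{\max(i,j)+k}-\tau_{\min(i,j)}$ then appears because it is (up to constants depending only on $k$) the geometric-mean scale $\sqrt{\nu_i\nu_j}$ of the relevant B-splines when $|i-j|$ is bounded, and a telescoping/greedy-covering argument bounds a chain of intermediate scales $\nu_{i}\cdots$ by this single ``span'' when $|i-j|$ is large, the geometric factor absorbing the mismatch.

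The main obstacle is establishing the contraction $\|K\| \le q < 1$ with $q$ depending \emph{only on $k$} and not on the (possibly wildly varying) knot spacings $\nu_i$ — ordinary condition-number bounds for banded matrices degrade as neighboring mesh ratios blow up. Here one must use genuine spline structure: the de Boor stability estimate \eqref{eq:deboorlpstab} (equivalence of $\|g\|_2$ with the weighted $\ell^2$ norm of the coefficients with weights $\nu_j^{1/2}$) shows that $D^{-1/2} A D^{-1/2}$ is, in the weighted inner product, uniformly well-conditioned with constants depending only on $k$; combined with Shadrin's theorem this pins down the spectral gap. So the skeleton is: (i) Shadrin + \eqref{eq:deboorlpstab} $\Rightarrow$ uniform two-sided bounds on the symmetrized Gram matrix $\Rightarrow$ $\|K\|\le q(k)<1$; (ii) banded structure of $K$ $\Rightarrow$ $|\,(D^{-1/2}\sum_m K^m D^{-1/2})_{ij}\,| \lesssim q^{|i-j|/k}(\nu_i\nu_j)^{-1/2}$; (iii) a purely metric lemma on partitions: $(\nu_i\nu_j)^{-1/2}\cdot(\text{chain product of intermediate }\nu^{\pm})\lesssim (\tau_{\max(i,j)+k}-\tau_{\min(i,j)})^{-1}$ after trading a bounded power of the mesh-ratio growth against a fixed portion of the geometric factor, which forces the final $\gamma$ to be a fixed root of $q$. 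The checkerboard property is not strictly needed for the estimate but confirms the signs and can be used to replace absolute values by signed sums, simplifying (ii).
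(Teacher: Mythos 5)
Your proposal breaks into three steps: (i) de~Boor $L^2$-stability \eqref{eq:deboorlpstab} (you invoke Shadrin as well, but it is not needed for the $L^2$ statement) gives that the symmetrized Gram matrix $\tilde A = D^{-1/2}AD^{-1/2}$, $D=\operatorname{diag}(\|N_i\|_2^2)$, is uniformly well-conditioned with constants depending only on $k$; (ii) $\tilde A$ is banded with bandwidth $k-1$, so the Demko--Moss--Smith/Neumann-series argument gives $|\tilde b_{ij}|\lesssim q^{|i-j|}$, hence $|b_{ij}|\lesssim q^{|i-j|}(\nu_i\nu_j)^{-1/2}$; (iii) a ``purely metric lemma'' that converts the denominator $\sqrt{\nu_i\nu_j}$ into $\tau_{\max(i,j)+k}-\tau_{\min(i,j)}$ after spending part of the geometric factor. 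Steps (i) and (ii) are fine and standard, but step (iii) is where the argument collapses, and it cannot be repaired by a metric lemma.

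The point is that the theorem's bound is \emph{strictly stronger} than the Demko--Moss--Smith output. Since $\tau_{\max(i,j)+k}-\tau_{\min(i,j)}\geq\max(\nu_i,\nu_j)\geq\sqrt{\nu_i\nu_j}$, you would need
\[
\frac{\tau_{\max(i,j)+k}-\tau_{\min(i,j)}}{\sqrt{\nu_i\nu_j}}\lesssim\Big(\frac{\gamma}{q}\Big)^{|i-j|},
\]
and this fails badly for partitions with large local mesh-ratio jumps: take $j-i=2k$ and a partition in which $\nu_i=\nu_j=\varepsilon$ while some intermediate grid interval $[\tau_{i+k},\tau_{i+k+1}]$ (disjoint from $\operatorname{supp}N_i\cup\operatorname{supp}N_j$) has length of order $1$. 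Then the left-hand side is $\gtrsim 1/\varepsilon$, unbounded, while $|i-j|=2k$ is fixed, so the geometric factor is a constant and there is nothing to absorb the blow-up. The mesh ratio is not bounded in terms of $k$, so ``trading a bounded power of the mesh-ratio growth'' is not available. In fact, the interesting content of Theorem~\ref{thm:maintool} is precisely that $|b_{ij}|$ is controlled by the \emph{span} $\tau_{\max(i,j)+k}-\tau_{\min(i,j)}$ (which can be enormous compared to $\nu_i,\nu_j$), so that when a large gap separates $N_i$ from $N_j$ the entry $b_{ij}$ is small even when both supports are tiny; a generic condition-number/banded-matrix argument only ``sees'' the diagonal scalings $\nu_i^{-1/2},\nu_j^{-1/2}$ and cannot produce this improvement. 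The actual proof in \cite{PassenbrunnerShadrin2013} has to use the spline-specific structure (total positivity, B-spline recursions, and the induced sign and decay properties of the dual functions $N_i^*$) at the level of the entries, not just at the level of the operator norm. As written, your step (iii) is false, so the proposal does not establish the theorem.
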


Let $f\in L^p[0,1]$ for some $1\leq p<\infty$. Since the orthonormal spline system $(f_n)_{n\geq -k+2}$ is a basis in $L^p[0,1]$, we can write $f=\sum_{n=-k+2}^\infty a_n f_n$. Based on this expansion, we define the \emph{square function} $Sf:=\big(\sum_{n=-k+2}^\infty |a_n f_n|^2\big)^{1/2}$ and the \emph{maximal function} $Mf:=\sup_m \big| \sum_{n\leq m} a_n f_n \big|$.
Moreover, given a measurable function $g$, we denote by $\mathcal Mg$ the \emph{Hardy-Littlewood maximal function} of $g$ defined as
\[
\mathcal Mg(x):=\sup_{I\ni x} |I|^{-1} \int_I |g(t)|\dif t,
\]
where the supremum is taken over all intervals $I$ containing the point $x$.

A corollary of Theorem \ref{thm:maintool} gives the following relation between $M$ and $\mathcal M$:
\begin{thm}[\cite{PassenbrunnerShadrin2013}]\label{thm:maxbound}
If $f\in L^1[0,1]$, we have
\[
Mf(t)\lesssim \mathcal M f(t),\qquad t\in[0,1].
\]
\end{thm}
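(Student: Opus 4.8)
The plan is to fix $f\in L^1[0,1]$ and $t\in[0,1]$, and to estimate the partial sum operator $P_m f := \sum_{n\le m} a_n f_n$ at the point $t$. Since $P_m$ is the orthogonal projection onto $\mathcal S_m^{(k)}$, the key is to write it as an integral operator with kernel $K_m(x,y)=\sum_{n\le m} f_n(x)f_n(y)$ and to bound this kernel pointwise. The standard way to access $K_m$ is via the B-spline basis of $\mathcal S_m^{(k)}$: if $(N_i)_{i=1}^{M}$ are the $L^\infty$-normalized B-splines on the grid $\mathcal T$ underlying $\mathcal S_m^{(k)}$ and $(b_{ij})$ is the inverse of their Gram matrix, then $K_m(x,y)=\sum_{i,j} b_{ij} N_i(x) N_j(y)$. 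This is where Theorem \ref{thm:maintool} enters: it gives the geometric decay estimate $|b_{ij}|\le C\gamma^{|i-j|}/(\tau_{\max(i,j)+k}-\tau_{\min(i,j)})$.

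Next I would use this to show $\int_0^1 |K_m(x,y)|\dif y \lesssim 1$ uniformly in $m$ and $x$, and more precisely that the kernel has an exponentially decaying, localized profile: for $x\in[\tau_\ell,\tau_{\ell+1}]$ one has roughly $|K_m(x,y)|\lesssim \sum_{i} \gamma^{|i-\ell|}\nu_i^{-1}\charfun_{[\tau_i,\tau_{i+k}]}(y)$ up to the $N_i,N_j$ supports and the $\nu_j$-weights, using $\sum_j |N_j(y)| \le 1$ and the fact that for fixed $y$ only $k$ of the $N_j(y)$ are nonzero. Summing the geometric series in $i$ against these localized bumps, one sees that $|K_m(x,\cdot)|$ is dominated by a sum of averaging kernels over the intervals $[\tau_i,\tau_{i+k}]$ with weights $\gamma^{|i-\ell|}$ that form a summable sequence. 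Consequently
\[
|P_m f(t)| = \Big|\int_0^1 K_m(t,y) f(y)\dif y\Big| \lesssim \sum_{i} \gamma^{|i-\ell(t)|}\, |[\tau_i,\tau_{i+k}]|^{-1}\int_{[\tau_i,\tau_{i+k}]} |f(y)|\dif y \lesssim \mathcal M f(t),
\]
because each averaging term is at most $\mathcal M f(t)$ (the point $t$ lies in $[\tau_\ell,\tau_{\ell+1}]\subset[\tau_i,\tau_{i+k}]$ when $|i-\ell|<k$, and for $|i-\ell|\ge k$ one enlarges the interval to one containing both $t$ and $[\tau_i,\tau_{i+k}]$, paying only a factor comparable to the ratio of lengths, which is absorbed by the geometric weight — or one simply notes $[\tau_i,\tau_{i+k}]$ can be replaced by the smallest interval containing it and $t$, whose length is controlled). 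Taking the supremum over $m$ gives $Mf(t)\lesssim \mathcal M f(t)$.

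The main obstacle I anticipate is the bookkeeping in the kernel estimate: one must carefully track the mismatch between the index $i$ (which controls the $b_{ij}$ decay and the denominators $\tau_{\max+k}-\tau_{\min}$) and the actual geometric location of the supports of $N_i$ and $N_j$, and then convert the resulting sum of weighted local averages into a genuine Hardy–Littlewood bound uniformly in $m$. In particular, when $i$ is far from $\ell(t)$ the interval $[\tau_i,\tau_{i+k}]$ need not contain $t$, so the average over it is not literally $\le \mathcal M f(t)$; one must enlarge to an interval containing both, and check that the length blow-up is beaten by $\gamma^{|i-\ell|}$ — this uses that $\tau_{\max(i,\ell)+k}-\tau_{\min(i,\ell)}$ appears in the denominator coming from $|b_{ij}|$, so the ratio of the enlarged length to $\nu_i$ is controlled. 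Once this localization is in place, the rest is summing a geometric series, and the uniformity in $m$ is automatic since all constants in Theorem \ref{thm:maintool} depend only on $k$.
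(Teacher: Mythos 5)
Your approach is the right one and is essentially the route behind the cited result: write the partial-sum kernel via the dual B-spline expansion, $K_m(x,y)=\sum_{i,j}b_{ij}N_i(x)N_j(y)$, and feed the decay estimate of Theorem \ref{thm:maintool} into the resulting double sum. There is one imprecision worth fixing in your displayed inequality: the weight $|[\tau_i,\tau_{i+k}]|^{-1}=\nu_i^{-1}$ is not what Theorem \ref{thm:maintool} actually gives you, and the subsequent enlargement argument (paying a length ratio against $\gamma^{|i-\ell|}$) is not needed and would in fact be hard to carry out, since that ratio is not controlled by $\nu_i$ alone. What the kernel estimate gives directly, for $t\in[\tau_\ell,\tau_{\ell+1}]$ so that only the $\le k$ indices $i$ with $N_i(t)\neq 0$ contribute, is
\[
|P_mf(t)|\ \lesssim\ \sum_{i:\,N_i(t)\neq 0}\ \sum_{j}\ \frac{\gamma^{|i-j|}}{\tau_{\max(i,j)+k}-\tau_{\min(i,j)}}\ \int_{\tau_j}^{\tau_{j+k}}|f(y)|\dif y,
\]
and the key point is that the interval $[\tau_{\min(i,j)},\tau_{\max(i,j)+k}]$ already contains both $t$ (because $t\in\supp N_i$) and $\supp N_j=[\tau_j,\tau_{j+k}]$, so each term is at most $\gamma^{|i-j|}\,\mathcal Mf(t)$ with no enlargement and no loss; summing the geometric series in $j$ and the finitely many $i$ finishes the proof uniformly in $m$. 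You do recover precisely this observation in your closing paragraph, so the argument as a whole is sound; stating the normalizer as the span $\tau_{\max(i,j)+k}-\tau_{\min(i,j)}$ from the outset simply avoids the enlargement bookkeeping you flagged as the main obstacle.
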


\section{Properties of orthogonal spline functions}\label{sec:proporth}
This section treats the calculation and estimation of one explicit orthonormal spline function $f_n^{(k)}$ for fixed $k\in\mathbb N$ and $n\geq 2$ induced by the admissible sequence $(t_n)_{n=0}^\infty$.
Let $i_0$ be an index with $k+1\leq i_0\leq M$.
The partition $\mathcal T$ is defined as follows:
\begin{align*}
\mathcal T=(0=\tau_1=\dots=\tau_k<\tau_{k+1}&\leq\dots\leq\tau_{i_0} \\
&\leq\dots\leq\tau_{M}<\tau_{M+1}=\dots=\tau_{M+k}=1),
\end{align*}
and the partition $\widetilde{\mathcal T}$ is defined to be the same as $\mathcal T$, but with $\tau_{i_0}$ removed. In the same way we denote by $(N_i:1\leq i\leq M)$ the B-spline functions corresponding to $\mathcal T$ and by $(\widetilde{N}_i:1\leq i\leq M-1)$ the B-spline functions corresponding to $\widetilde{\mathcal T}$. Böhm's formula \cite{Boehm1980} gives us the following relationship between $N_i$ and $\widetilde{N}_i$:
\begin{equation}\label{eq:boehm}
\left\{
\begin{aligned}
\widetilde{N}_i(t)&=N_i(t)  &\text{if }1\leq i\leq i_0-k-1, \\
\widetilde{N}_i(t)&=\frac{\tau_{i_0}-\tau_i}{\tau_{i+k}-\tau_i}N_i(t)+\frac{\tau_{i+k+1}-\tau_{i_0}}{\tau_{i+k+1}-\tau_{i+1}}N_{i+1}(t) &\text{if }i_0-k\leq i\leq i_0-1, \\
\widetilde{N}_i(t)&= N_{i+1}(t) &\text{if }i_0\leq i\leq M-1.
\end{aligned}
\right.
\end{equation}

In order to calculate the orthonormal spline function corresponding to the partitions $\widetilde{\mathcal T}$ and $\mathcal T$, we first determine a function $g\in\lin\{N_i:1\leq i\leq M\}$ such that $g\perp \widetilde{N}_j$ for all $1\leq j\leq M-1$. That is, we assume that $g$ is of the form
\[
g=\sum_{j=1}^M \alpha_j N_j^*,
\]
where $(N_j^*:1\leq j\leq M)$ is the biorthogonal system to the functions $(N_i:1\leq i\leq M)$.
In order for $g$ to be orthogonal to $\widetilde{N}_j$, $1\leq j\leq M-1$, it has to satisfy the identities
\[
0=\langle g,\widetilde{N}_i\rangle=\sum_{j=1}^M \alpha_j\langle N_j^*,\widetilde{N}_i\rangle,\quad 1\leq i\leq M-1.
\]
Using \eqref{eq:boehm}, this implies $\alpha_j=0$ if $1\leq i\leq i_0-k-1$ or $i_0+1\leq i\leq M$. For $i_0-k\leq i\leq i_0-1$, we have the recursion formula
\begin{equation}\label{eq:recalpha}
\begin{aligned}
\alpha_{i+1}\frac{\tau_{i+k+1}-\tau_{i_0}}{\tau_{i+k+1}-\tau_{i+1}}+\alpha_{i}\frac{\tau_{i_0}-\tau_i}{\tau_{i+k}-\tau_i}=0,
 \end{aligned}
\end{equation}
which determines the sequence $(\alpha_j)$ up to a multiplicative constant. We choose 
\begin{equation*}
\alpha_{i_0-k}=\prod_{\ell=i_0-k+1}^{i_0-1}\frac{\tau_{\ell+k}-\tau_{i_0}}{\tau_{\ell+k}-\tau_{\ell}}
\end{equation*}
for symmetry reasons. This starting value and the recursion \eqref{eq:recalpha} yield the explicit formula
\begin{equation}\label{eq:alpha2}
\alpha_j=(-1)^{j-i_0+k}\Big(\prod_{\ell=i_0-k+1}^{j-1}\frac{\tau_{i_0}-\tau_{\ell}}{\tau_{\ell+k}-\tau_{\ell}}\Big)\Big(\prod_{\ell=j+1}^{i_0-1}\frac{\tau_{\ell+k}-\tau_{i_0}}{\tau_{\ell+k}-\tau_{\ell}}\Big),\quad i_0-k\leq j\leq i_0.
\end{equation}
So, the function $g$ is given by
\begin{align*}
g&=\sum_{j=i_0-k}^{i_0} \alpha_j N_j^* =\sum_{j=i_0-k}^{i_0} \sum_{\ell=1}^{M} \alpha_j b_{j\ell} N_\ell,
\end{align*}
where $(b_{j\ell})_{j,\ell=1}^M$ is the inverse of the Gram matrix $(\langle N_j,N_\ell\rangle)_{j,\ell=1}^M$.
We remark that the sequence $(\alpha_j)$ alternates in sign and since the matrix $(b_{j\ell})_{j,\ell=1}^M$ is checkerboard, we see that the B-spline coefficients of $g$, namely
\begin{equation}\label{eq:defwj}
w_\ell:=\sum_{j=i_0-k}^{i_0} \alpha_j b_{j\ell},\qquad 1\leq \ell\leq M,
\end{equation}
satisfy
\begin{equation}\label{eq:betragreinziehen}
\Big| \sum_{j=i_0-k}^{i_0}\alpha_j b_{j\ell}\Big|= \sum_{j=i_0-k}^{i_0}|\alpha_j b_{j\ell}|,\qquad 1\leq j\leq M.
\end{equation}

In the following Definition \ref{def:characteristic}, we assign to each orthonormal spline function a characteristic interval that is a grid point interval $[\tau_i,\tau_{i+1}]$ and lies in the proximity of the newly inserted point $\tau_{i_0}$. We will later see that the choice of this interval is crucial for proving important properties that are needed for showing that the system $(f_n^{(k)})_{n=-k+2}^\infty$ is an unconditional basis in $L^p$, $1<p<\infty$ for all admissible knot sequences $(t_n)_{n\geq 0}$. This approach was already used by G.\ G.\ Gevorkyan and A.\ Kamont \cite{GevKam2004} in the proof that general Franklin systems are unconditional in $L^p$, $1<p<\infty$, where the characteristic intervals were called J-intervals. Since we give a slightly different construction here, we name them characteristic intervals.
\begin{defin}\label{def:characteristic}
Let $\mathcal T,\widetilde{\mathcal T}$ be as above and $\tau_{i_0}$ the new point in $\mathcal T$ that is not present in $\widetilde{\mathcal T}$. We define the \emph{characteristic interval $J$ corresponding to $\tau_{i_0}$} as follows. 
\begin{enumerate}
\item 
Let 
\[
\Lambda^{(0)}:=\{i_0-k\leq j\leq i_0 : |[\tau_j,\tau_{j+k}]|\leq 2\min_{i_0-k\leq \ell\leq i_0}|[\tau_\ell,\tau_{\ell+k}]| \}
\]
be the set of all indices $j$ for which the corresponding support of the B-spline function $N_j$ is approximately minimal. Observe that $\Lambda^{(0)}$ is nonempty.
\item Define
\[
\Lambda^{(1)}:=\{j\in \Lambda^{(0)}: |\alpha_j|=\max_{\ell\in \Lambda^{(0)}} |\alpha_\ell|\}.
\]
For an arbitrary, but fixed index $j^{(0)}\in \Lambda^{(1)}$, set $J^{(0)}:=[\tau_{j^{(0)}},\tau_{j^{(0)}+k}]$.
\item The interval $J^{(0)}$ can now be written as the union of $k$ grid intervals
\[
J^{(0)}=\bigcup_{\ell=0}^{k-1}[\tau_{j^{(0)}+\ell},\tau_{j^{(0)}+\ell+1}]\qquad\text{with }j^{(0)}\text{ as above}.
\]
We define the \emph{characteristic interval} $J=J(\tau_{i_0})$ to be one of the above $k$ intervals that has maximal length.
\end{enumerate}
\end{defin}
We remark that in the definition of $\Lambda^{(0)}$, we may replace the factor $2$ by any other constant $C>1$. It is essential though that $C>1$ in order to obtain the following theorem which is crucial for further investigations.

\begin{thm}\label{thm:estwj}
With the above definition \eqref{eq:defwj} of $w_\ell$ for $1\leq \ell\leq M$ and the index $j^{(0)}$ given in Definition \ref{def:characteristic}, 
\begin{equation}\label{eq:estwj}
|w_{j^{(0)}}|\gtrsim b_{j^{(0)},j^{(0)}}.
\end{equation}
\end{thm}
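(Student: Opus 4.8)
The plan is to show that the diagonal term $\alpha_{j^{(0)}}b_{j^{(0)},j^{(0)}}$ already dominates the full sum $w_{j^{(0)}}=\sum_{j=i_0-k}^{i_0}\alpha_j b_{j,j^{(0)}}$ in absolute value; because of the sign identity \eqref{eq:betragreinziehen} all terms add constructively, so it suffices to prove $|\alpha_{j^{(0)}}b_{j^{(0)},j^{(0)}}|\gtrsim \sum_{j}|\alpha_j b_{j,j^{(0)}}|$, or even just that the diagonal term is comparable to one term (it trivially already bounds the sum below by the single diagonal term, so the real content is the upper estimate of the off-diagonal terms relative to it). First I would invoke Theorem \ref{thm:maintool} to bound each off-diagonal entry: $|b_{j,j^{(0)}}|\lesssim \gamma^{|j-j^{(0)}|}/(\tau_{\max(j,j^{(0)})+k}-\tau_{\min(j,j^{(0)})})$. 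Since $|j-j^{(0)}|\le k$, the factor $\gamma^{|j-j^{(0)}|}$ is only a $k$-dependent constant, so I cannot win decay from it; instead I must win from comparing the $\alpha$'s and the lengths.

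The key comparison of the $\alpha$-coefficients comes from the explicit product formula \eqref{eq:alpha2}: one computes the ratio $\alpha_j/\alpha_{j^{(0)}}$ as a telescoping product of factors of the form $(\tau_{i_0}-\tau_\ell)/(\tau_{\ell+k}-\tau_\ell)$ and $(\tau_{\ell+k}-\tau_{i_0})/(\tau_{\ell+k}-\tau_\ell)$, each of which lies in $[0,1]$ since $\tau_{i_0-k}\le\tau_\ell\le\tau_{i_0}\le\tau_{\ell+k}$ for the relevant range of $\ell$. Hence $|\alpha_j|\le|\alpha_{j^{(0)}}|$ for every $j\in\Lambda^{(0)}$ by the very choice of $j^{(0)}\in\Lambda^{(1)}$, and for $j\notin\Lambda^{(0)}$ I expect to extract genuine smallness: when $|[\tau_j,\tau_{j+k}]|$ is large compared to the minimal length, the corresponding product of ratios for $\alpha_j$ picks up small factors, or alternatively the denominator $\tau_{\max(j,j^{(0)})+k}-\tau_{\min(j,j^{(0)})}$ in the bound for $b_{j,j^{(0)}}$ is large while $b_{j^{(0)},j^{(0)}}\sim 1/|[\tau_{j^{(0)}},\tau_{j^{(0)}+k}]|\sim 1/\nu_{j^{(0)}}$ has a small denominator (note $J^{(0)}=[\tau_{j^{(0)}},\tau_{j^{(0)}+k}]$ was chosen among indices with approximately minimal B-spline support). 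Matching these two effects — the size of $|\alpha_j|$ against the size of $1/(\tau_{\max+k}-\tau_{\min})$ relative to $|\alpha_{j^{(0)}}|/\nu_{j^{(0)}}$ — is exactly where the condition $C>1$ in the definition of $\Lambda^{(0)}$ is used, as the remark preceding the theorem warns.

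Concretely, I would split the sum $\sum_{j\ne j^{(0)}}|\alpha_j b_{j,j^{(0)}}|$ into $j\in\Lambda^{(0)}$ and $j\notin\Lambda^{(0)}$. For $j\in\Lambda^{(0)}$, $|\alpha_j|\le|\alpha_{j^{(0)}}|$ and $|[\tau_j,\tau_{j+k}]|\le 2|[\tau_{j^{(0)}},\tau_{j^{(0)}+k}]|$, and also $|[\tau_{j^{(0)}},\tau_{j^{(0)}+k}]|\le 2|[\tau_j,\tau_{j+k}]|$, so the two supports overlap substantially and $\tau_{\max(j,j^{(0)})+k}-\tau_{\min(j,j^{(0)})}\sim\nu_{j^{(0)}}$; combined with $b_{j^{(0)},j^{(0)}}\gtrsim 1/\nu_{j^{(0)}}$ (which follows since $b_{j^{(0)},j^{(0)}}$ is a diagonal entry of the inverse Gram matrix and $\langle N_{j^{(0)}},N_{j^{(0)}}\rangle\lesssim\nu_{j^{(0)}}$, using that the Gram matrix is totally positive so its diagonal inverse entries are positive and bounded below — or directly from Theorem \ref{thm:maintool} applied with a matching lower bound, or from \eqref{eq:deboorlpstab}) this gives $|\alpha_j b_{j,j^{(0)}}|\lesssim |\alpha_{j^{(0)}}|/\nu_{j^{(0)}}\lesssim|\alpha_{j^{(0)}}|b_{j^{(0)},j^{(0)}}$ with a $k$-dependent constant, and there are at most $k+1$ such terms. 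For $j\notin\Lambda^{(0)}$ one has $|[\tau_j,\tau_{j+k}]|>2\min_\ell|[\tau_\ell,\tau_{\ell+k}]|$; here I would argue that the product formula \eqref{eq:alpha2} forces $|\alpha_j|$ to be small relative to $|\alpha_{j^{(0)}}|\cdot\nu_{j^{(0)}}/(\tau_{\max(j,j^{(0)})+k}-\tau_{\min(j,j^{(0)})})$, so that again $|\alpha_j b_{j,j^{(0)}}|\lesssim|\alpha_{j^{(0)}}|b_{j^{(0)},j^{(0)}}$, and sum the at most $k$ terms. Adding everything, $\sum_{j}|\alpha_j b_{j,j^{(0)}}|\lesssim|\alpha_{j^{(0)}}|b_{j^{(0)},j^{(0)}}$; since the diagonal term is one of the summands on the left, $\sum_j|\alpha_j b_{j,j^{(0)}}|\sim|\alpha_{j^{(0)}}|b_{j^{(0)},j^{(0)}}$, and by \eqref{eq:betragreinziehen} this equals $|w_{j^{(0)}}|$, so $|w_{j^{(0)}}|\sim|\alpha_{j^{(0)}}|b_{j^{(0)},j^{(0)}}\gtrsim b_{j^{(0)},j^{(0)}}$ once we note $|\alpha_{j^{(0)}}|\gtrsim 1$ — which holds because $\alpha_{j^{(0)}}$ appears among a family of products of factors in $[0,1]$ with $\alpha_{i_0-k}$ being one such product too, and after the natural normalization these stay bounded away from $0$ on $\Lambda^{(1)}$; if the normalization chosen does not literally give $|\alpha_{j^{(0)}}|\gtrsim 1$, one instead keeps the statement in the scale-free form $|w_{j^{(0)}}|\gtrsim b_{j^{(0)},j^{(0)}}$ by tracking that the same normalization constant cancels on both sides. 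The main obstacle is the off-diagonal estimate for $j\notin\Lambda^{(0)}$: quantitatively extracting the decay of $|\alpha_j|$ from the product \eqref{eq:alpha2} in terms of how much $|[\tau_j,\tau_{j+k}]|$ exceeds the minimum, and checking this decay beats the growth of the denominator in Theorem \ref{thm:maintool}'s bound — this is precisely the delicate point for which the strict inequality $C>1$ is indispensable.
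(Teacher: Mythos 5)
There is a genuine gap: the only thing the theorem actually requires, and the entire content of the paper's proof, is the lower bound $|\alpha_{j^{(0)}}|\geq D_k>0$, and your proposal asserts this at the very end without proof. You write that it ``holds because $\alpha_{j^{(0)}}$ appears among a family of products of factors in $[0,1]$ \dots\ and after the natural normalization these stay bounded away from $0$ on $\Lambda^{(1)}$.'' That is circular: membership of $j^{(0)}$ in $\Lambda^{(1)}$ only gives $|\alpha_{j^{(0)}}|\geq|\alpha_j|$ for $j\in\Lambda^{(0)}$, which says nothing about an absolute lower bound. Each $|\alpha_j|$ is a product of factors in $[0,1]$, so a priori all of them (including the maximum) could be small. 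The nontrivial step is to exploit the interplay between the maximality of $|\alpha_{j^{(0)}}|$ over $\Lambda^{(0)}$ and the geometric constraint defining $\Lambda^{(0)}$ (supports of approximately minimal length) to show that each factor in the product \eqref{eq:formalphaj} for $j=j^{(0)}$ is individually bounded below; the paper does this by reducing to \eqref{eq:alphabed}, then to the single inequality \eqref{eq:ulttoshow}, and then running a careful five-case analysis built on Lemmas \ref{lem:tau} and \ref{lem:tau2}. None of that appears in your proposal.

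A secondary issue is that you spend most of the proposal proving (or sketching) an upper bound $\sum_{j}|\alpha_j b_{j,j^{(0)}}|\lesssim|\alpha_{j^{(0)}}|b_{j^{(0)},j^{(0)}}$. This is not needed: by the checkerboard identity \eqref{eq:betragreinziehen}, $|w_{j^{(0)}}|$ equals the sum of absolute values, which is trivially $\geq|\alpha_{j^{(0)}}|b_{j^{(0)},j^{(0)}}$, and that single lower bound is already exactly what the theorem asks for once $|\alpha_{j^{(0)}}|\gtrsim1$ is known. The effort on controlling the off-diagonal terms is extraneous. Finally, the claim that the ratio $\alpha_j/\alpha_{j^{(0)}}$ telescopes into a product of factors in $[0,1]$ is false in general (the ratio places some of these factors in the denominator); the correct reason $|\alpha_j|\leq|\alpha_{j^{(0)}}|$ for $j\in\Lambda^{(0)}$ is simply the definition of $\Lambda^{(1)}$, which you also cite, so this slip does not propagate, but it signals a misreading of the structure of \eqref{eq:alpha2}.
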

Before we start the proof of this theorem, we state a few remarks and lemmata.
For the choice of $j^{(0)}$ in Definition \ref{def:characteristic}, we have, by construction, the following inequalities: for all $i_0-k\leq \ell\leq i_0$ with $\ell\neq j^{(0)}$,
\begin{equation}\label{eq:constrJ1}
|\alpha_\ell|\leq |\alpha_{j^{(0)}}|\quad\text{or}\quad |[\tau_\ell,\tau_{\ell+k}]|>2 \min_{i_0-k\leq s\leq i_0} |[\tau_s,\tau_{s+k}]|.
\end{equation}
We recall the identity
\begin{equation}\label{eq:formalphaj}
|\alpha_j|=\Big(\prod_{\ell=i_0-k+1}^{j-1}\frac{\tau_{i_0}-\tau_{\ell}}{\tau_{\ell+k}-\tau_{\ell}}\Big)\Big(\prod_{\ell=j+1}^{i_0-1}\frac{\tau_{\ell+k}-\tau_{i_0}}{\tau_{\ell+k}-\tau_{\ell}}\Big),\qquad i_0-k\leq j\leq i_0.
\end{equation}
Since by \eqref{eq:betragreinziehen},
\begin{equation*}
|w_{j^{(0)}}|=\sum_{j=i_0-k}^{i_0}|\alpha_j b_{j,j^{(0)}}|\geq |\alpha_{j^{(0)}}||b_{j^{(0)},j^{(0)}}|,
\end{equation*}
in order to show \eqref{eq:estwj}, we prove the inequality
\begin{equation*}
|\alpha_{j^{(0)}}|\geq D_k>0
\end{equation*}
with a constant $D_k$ only depending on $k$. 
By \eqref{eq:formalphaj}, this inequality follows from the more elementary inequalities
\begin{equation}\label{eq:alphabed}
\begin{aligned}
\tau_{i_0}-\tau_{\ell}&\gtrsim \tau_{\ell+k}-\tau_{i_0},&\qquad i_0-k+1&\leq \ell\leq j^{(0)}-1, \\
\tau_{\ell+k}-\tau_{i_0}&\gtrsim \tau_{i_0}-\tau_{\ell},&\qquad j^{(0)}+1&\leq \ell\leq i_0-1.
\end{aligned}
\end{equation}
We will only prove the second line of \eqref{eq:alphabed} for all choices of $j^{(0)}$. The first line of \eqref{eq:alphabed} is then proved by a similar argument. We observe that if $j^{(0)}\geq i_0-1$, then there is nothing to prove, so we assume 
\begin{equation}\label{eq:assj0}
j^{(0)}\leq i_0-2.
\end{equation} 
Moreover, we need only show the single inequality
\begin{equation}\label{eq:ulttoshow}
\tau_{j^{(0)}+k+1}-\tau_{i_0}\gtrsim \tau_{i_0}-\tau_{j^{(0)}+1},
\end{equation}
since if we assume \eqref{eq:ulttoshow}, for any $j^{(0)}+1\leq \ell\leq i_0-1$,
\begin{equation*}
\tau_{\ell+k}-\tau_{i_0}\geq \tau_{j^{(0)}+k+1}-\tau_{i_0}\gtrsim \tau_{i_0}-\tau_{j^{(0)}+1}\geq \tau_{i_0}-\tau_\ell.
\end{equation*}

We now choose the index $j$ to be the minimal index in the range $i_0\geq j>j^{(0)}$ such that 
\begin{equation}\label{eq:bedalpha}
|\alpha_j|\leq |\alpha_{j^{(0)}}|.
\end{equation} 
If there is no such $j$, we set $j=i_0+1$. 

If $j\leq i_0$, we employ \eqref{eq:formalphaj} to get that \eqref{eq:bedalpha} is equivalent to
\begin{equation}\label{eq:finalpha}
\begin{aligned}
(\tau_{j+k}-\tau_j)^{1-\delta(j,i_0)}&\prod_{\ell=j^{(0)}\vee(i_0-k+1)}^{j-1}(\tau_{i_0}-\tau_\ell) \\
&\leq(\tau_{j^{(0)}+k}-\tau_{j^{(0)}})^{1-\delta(j^{(0)},i_0-k)}\prod_{\ell=j^{(0)}+1}^{j\wedge(i_0-1)}(\tau_{\ell+k}-\tau_{i_0}),
\end{aligned}
\end{equation}
where $\delta(\cdot,\cdot)$ is the Kronecker delta.
Furthermore, let the index $m$ in the range $i_0-k\leq m\leq i_0$ be such that $\tau_{m+k}-\tau_m=\min_{i_0-k\leq s\leq i_0}(\tau_{s+k}-\tau_s)$.

Now, from the minimality of $j$ and \eqref{eq:constrJ1}, we obtain the inequalities
\begin{equation}\label{eq:condmax}
\tau_{\ell+k}-\tau_\ell>2 (\tau_{m+k}-\tau_m),\qquad j^{(0)}+1\leq \ell\leq j-1.
\end{equation}
Thus, by definition, the index $m$ satisfies
\begin{equation}\label{eq:rangem}
m\leq j^{(0)}\quad\text{or}\quad m\geq j.
\end{equation}
\begin{lem}\label{lem:tau}
In the above notation, if $m\leq j^{(0)}$ and $j-j^{(0)}\geq 2$, we have \eqref{eq:ulttoshow} or more precisely,
\begin{equation}\label{eq:lem:tau}
\tau_{j^{(0)}+k+1}-\tau_{i_0}\geq \tau_{i_0}-\tau_{j^{(0)}+1}.
\end{equation}
\end{lem}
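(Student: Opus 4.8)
The plan is to prove the stronger inequality \eqref{eq:lem:tau} directly. The idea is to bound the quantity $\tau_{i_0}-\tau_{j^{(0)}+1}$ from above in two complementary ways: first by the length $\tau_{m+k}-\tau_m$ of the shortest B-spline support among the indices $i_0-k,\dots,i_0$, and secondly by half of the length $\tau_{j^{(0)}+k+1}-\tau_{j^{(0)}+1}$ of the support of $N_{j^{(0)}+1}$, which is comparatively large by \eqref{eq:condmax}. Since \eqref{eq:lem:tau} obviously implies \eqref{eq:ulttoshow}, this will finish the proof.

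First I would record the geometric fact that $\tau_{i_0}$ and $\tau_{j^{(0)}}$ both lie in the shortest support $[\tau_m,\tau_{m+k}]$: the inequality $m\geq i_0-k$ forces $\tau_{m+k}\geq\tau_{i_0}$, and the hypothesis $m\leq j^{(0)}$ together with $j^{(0)}\leq i_0$ forces $\tau_m\leq\tau_{j^{(0)}}\leq\tau_{i_0}$. Hence $[\tau_{j^{(0)}},\tau_{i_0}]\subseteq[\tau_m,\tau_{m+k}]$, so that
\[
\tau_{i_0}-\tau_{j^{(0)}+1}\ \leq\ \tau_{i_0}-\tau_{j^{(0)}}\ \leq\ \tau_{m+k}-\tau_m .
\]
Next, since $j-j^{(0)}\geq 2$, the index $\ell=j^{(0)}+1$ lies in the range $j^{(0)}+1\leq\ell\leq j-1$ of \eqref{eq:condmax}, which therefore yields
\[
\tau_{j^{(0)}+k+1}-\tau_{j^{(0)}+1}\ >\ 2\,(\tau_{m+k}-\tau_m).
\]

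Combining the last two displays,
\begin{align*}
\tau_{j^{(0)}+k+1}-\tau_{i_0}
&=\bigl(\tau_{j^{(0)}+k+1}-\tau_{j^{(0)}+1}\bigr)-\bigl(\tau_{i_0}-\tau_{j^{(0)}+1}\bigr)\\
&>2\,(\tau_{m+k}-\tau_m)-(\tau_{m+k}-\tau_m)=\tau_{m+k}-\tau_m\ \geq\ \tau_{i_0}-\tau_{j^{(0)}+1},
\end{align*}
which is exactly \eqref{eq:lem:tau}. I do not expect a genuine obstacle here: the entire argument hinges on spotting the inclusion $[\tau_{j^{(0)}},\tau_{i_0}]\subseteq[\tau_m,\tau_{m+k}]$ and on the index bookkeeping $i_0-k\leq m\leq j^{(0)}\leq i_0$ together with $j^{(0)}+1\leq j-1$, the latter being precisely what makes \eqref{eq:condmax} applicable to the index $j^{(0)}+1$. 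The only subtlety worth flagging is that both hypotheses of the lemma are used in exactly the places indicated — $m\leq j^{(0)}$ to place $\tau_m$ to the left of $\tau_{j^{(0)}}$, and $j-j^{(0)}\geq 2$ to make $j^{(0)}+1$ an admissible index in \eqref{eq:condmax}.
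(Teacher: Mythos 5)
Your proof is correct and follows essentially the same route as the paper: expand $\tau_{j^{(0)}+k+1}-\tau_{i_0}=(\tau_{j^{(0)}+k+1}-\tau_{j^{(0)}+1})-(\tau_{i_0}-\tau_{j^{(0)}+1})$, apply \eqref{eq:condmax} at $\ell=j^{(0)}+1$ (valid since $j-j^{(0)}\geq 2$), and use $i_0-k\leq m\leq j^{(0)}$ to bound $\tau_{i_0}-\tau_{j^{(0)}+1}\leq\tau_{m+k}-\tau_m$. The only cosmetic difference is that you state the inclusion $[\tau_{j^{(0)}},\tau_{i_0}]\subseteq[\tau_m,\tau_{m+k}]$ explicitly before invoking \eqref{eq:condmax}, whereas the paper does it in the reverse order.
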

\begin{proof}
We expand the left hand side of \eqref{eq:lem:tau} and write
\[
\tau_{j^{(0)}+k+1}-\tau_{i_0}=\tau_{j^{(0)}+k+1}-\tau_{j^{(0)}+1}-(\tau_{i_0}-\tau_{j^{(0)}+1}).
\]
By \eqref{eq:condmax} (observe that $j-j^{(0)}\geq 2$), we further conclude 
\[
\tau_{j^{(0)}+k+1}-\tau_{i_0}\geq 2(\tau_{m+k}-\tau_m)-(\tau_{i_0}-\tau_{j^{(0)}+1}).
\]
Since $m+k\geq i_0$ and $m\leq j^{(0)}$, we obtain finally
\[
\tau_{j^{(0)}+k+1}-\tau_{i_0}\geq \tau_{i_0}-\tau_{j^{(0)}+1},
\]
which is the conclusion of the lemma.
\end{proof}
\begin{lem}\label{lem:tau2}
Let $j^{(0)},j$ and $m$ be as above. If $j^{(0)}+1\leq \ell\leq j-1$ and $m\geq j$, we have
\begin{equation*}
\tau_{i_0}-\tau_\ell\geq \tau_{\ell+1+k}-\tau_{i_0}.
\end{equation*}
\end{lem}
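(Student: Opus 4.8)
The plan is to combine the separation estimate \eqref{eq:condmax} --- which forces, for each index strictly between $j^{(0)}$ and $j$, the B-spline support to be more than twice as long as the minimal support $[\tau_m,\tau_{m+k}]$ --- with two trivial monotonicity comparisons of knots. No delicate idea is needed here; everything is bookkeeping with the index constraints, and the hypothesis $m\ge j$ will be used in exactly one place.

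First I would record the two elementary facts. Since $m\le i_0$ (by the very definition of $m$), monotonicity of the knot sequence gives $\tau_m\le\tau_{i_0}$. Since $\ell\le j-1$ and, by hypothesis, $j\le m$, we have $\ell+1\le m$, hence $\ell+1+k\le m+k$ and therefore $\tau_{\ell+1+k}\le\tau_{m+k}$. Subtracting these two relations yields the one-sided bound
\[
\tau_{\ell+1+k}-\tau_{i_0}\ \le\ \tau_{m+k}-\tau_m .
\]

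Next, since $j^{(0)}+1\le\ell\le j-1$, inequality \eqref{eq:condmax} applies with exactly this value of the running index and gives $\tau_{\ell+k}-\tau_\ell>2(\tau_{m+k}-\tau_m)$. Combining this with the previous display and the trivial inequality $\tau_{\ell+1+k}\ge\tau_{\ell+k}$, I obtain
\[
\tau_{\ell+1+k}-\tau_\ell\ \ge\ \tau_{\ell+k}-\tau_\ell\ >\ 2(\tau_{m+k}-\tau_m)\ \ge\ 2(\tau_{\ell+1+k}-\tau_{i_0}).
\]
Finally, using the identity $\tau_{\ell+1+k}-\tau_\ell=(\tau_{i_0}-\tau_\ell)+(\tau_{\ell+1+k}-\tau_{i_0})$ and cancelling one copy of $\tau_{\ell+1+k}-\tau_{i_0}$ from both sides of the last inequality, what remains is $\tau_{i_0}-\tau_\ell>\tau_{\ell+1+k}-\tau_{i_0}$, which is the assertion of the lemma (indeed with a strict inequality).

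I do not expect a genuine obstacle. The only substantive ingredient, \eqref{eq:condmax}, is already available, and the one step that deserves attention is $\ell+1\le m$ --- precisely where the standing hypothesis $m\ge j$ enters. As usual, if the index range $j^{(0)}+1\le\ell\le j-1$ happens to be empty there is nothing to prove.
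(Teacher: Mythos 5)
Your proof is correct and follows essentially the same route as the paper: both arguments start from \eqref{eq:condmax} applied at index $\ell$, use $\tau_{\ell+1+k}\ge\tau_{\ell+k}$, the bounds $\tau_m\le\tau_{i_0}$ and $\tau_{\ell+1+k}\le\tau_{m+k}$ coming from $m\le i_0$ and $m\ge j\ge\ell+1$, and the decomposition $\tau_{\ell+1+k}-\tau_\ell=(\tau_{i_0}-\tau_\ell)+(\tau_{\ell+1+k}-\tau_{i_0})$. The only cosmetic difference is the order in which the elementary inequalities are combined.
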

\begin{proof}
Let $j^{(0)}+1\leq \ell\leq  j-1$. Then we obtain from \eqref{eq:condmax}
\begin{equation}\label{eq:mgtrj1}
\tau_{i_0}-\tau_{\ell}=\tau_{\ell+1+k}-\tau_{\ell}-(\tau_{\ell+1+k}-\tau_{i_0})\geq 2(\tau_{m+k}-\tau_m)-(\tau_{\ell+1+k}-\tau_{i_0}).
\end{equation}
Since we assumed $m\geq j\geq \ell+1$, we get $m+k\geq \ell+1+k$ and additionally we have $m\leq i_0$ by definition of $m$. Thus we conclude from \eqref{eq:mgtrj1}
\begin{equation*}
\tau_{i_0}-\tau_{\ell}\geq \tau_{\ell+1+k}-\tau_{i_0}.
\end{equation*}
Since the index $\ell$ was arbitrary in the range $j^{(0)}+1\leq \ell\leq  j-1$, the proof of the lemma is completed.
\end{proof}

\begin{proof}[Proof of Theorem \ref{thm:estwj}]
We employ the above definition of the indices $j^{(0)},j,$ and $m$ and split our analysis in a few cases distinguishing various possibilities for the parameters $j^{(0)}$ and $j$. In each case we will show \eqref{eq:ulttoshow}.

\textsc{Case 1: }There is no index $j>j^{(0)}$ such that $|\alpha_j|\leq |\alpha_{j^{(0)}}|$. \\
In this case, \eqref{eq:rangem} implies $m\leq j^{(0)}$. Since $j^{(0)}\leq i_0-2$ by \eqref{eq:assj0}, 
 we apply Lemma \ref{lem:tau} to conclude the proof of \eqref{eq:ulttoshow}.

\textsc{Case 2: } $i_0-k+1\leq j^{(0)}<j\leq i_0-1$. \\
Using the restrictions on our parameters $j^{(0)}$ and $j$, we see that \eqref{eq:finalpha} becomes
\begin{equation*}
(\tau_{j^{(0)}+k}-\tau_{j^{(0)}})\prod_{\ell=j^{(0)}+1}^{j}(\tau_{\ell+k}-\tau_{i_0})\geq (\tau_{j+k}-\tau_j)\prod_{\ell=j^{(0)}}^{j-1}(\tau_{i_0}-\tau_{\ell}).
\end{equation*}
This implies
\begin{align*}
	\tau_{j^{(0)}+k+1}-\tau_{i_0}\geq \frac{(\tau_{j+k}-\tau_j)(\tau_{i_0}-\tau_{j^{(0)}})}{\tau_{j^{(0)}+k}-\tau_{j^{(0)}}}\prod_{\ell=j^{(0)}+1}^{j-1}\frac{\tau_{i_0}-\tau_\ell}{\tau_{\ell+1+k}-\tau_{i_0}}.
\end{align*}
Since by definition of $j^{(0)}$, we have in particular $\tau_{j^{(0)}+k}-\tau_{j^{(0)}}\leq 2 (\tau_{j+k}-\tau_j)$, we conclude further 
\begin{equation}\label{eq:case2-1}
\tau_{j^{(0)}+k+1}-\tau_{i_0}\geq \frac{\tau_{i_0}-\tau_{j^{(0)}+1}}{2}\prod_{\ell=j^{(0)}+1}^{j-1}\frac{\tau_{i_0}-\tau_\ell}{\tau_{\ell+1+k}-\tau_{i_0}}.
\end{equation}

If $j=j^{(0)}+1$, the assertion \eqref{eq:ulttoshow} follows from \eqref{eq:case2-1}, since then, the product is empty.

If $j\geq j^{(0)}+2$ and $m\leq j^{(0)}$, we apply Lemma \ref{lem:tau} to obtain \eqref{eq:ulttoshow}.

If $j\geq j^{(0)}+2$ and $m\geq j$, we use Lemma \ref{lem:tau2} on the terms in the product appearing in \eqref{eq:case2-1} to conclude \eqref{eq:ulttoshow}.

This finishes the proof of Case 2.

\textsc{Case 3: }$i_0-k+1\leq j^{(0)}<j=i_0$.\\
Recall that $j^{(0)}\leq i_0-2=j-2$ by \eqref{eq:assj0}. If $m\leq j^{(0)}$, we apply Lemma \ref{lem:tau} and we are done with the proof of \eqref{eq:ulttoshow}.
So we assume $m\geq j$. Since $i_0=j$ and $m\leq i_0$, we have $m=j$. The restrictions on the indices $j^{(0)},j$ yield that condition \eqref{eq:finalpha} is nothing else than
\begin{equation*}
(\tau_{j^{(0)}+k}-\tau_{j^{(0)}})\prod_{\ell=j^{(0)}+1}^{i_0-1}(\tau_{\ell+k}-\tau_{i_0})\geq \prod_{\ell=j^{(0)}}^{i_0-1}(\tau_{i_0}-\tau_{\ell}).
\end{equation*}
Thus, in order to show \eqref{eq:ulttoshow}, it is enough to prove that there exists a constant $D_k>0$ only depending on $k$ such that 
\begin{equation}\label{eq:tau1}
\frac{\tau_{i_0}-\tau_{j^{(0)}}}{\tau_{j^{(0)}+k}- \tau_{j^{(0)}}}\prod_{\ell=j^{(0)}+2}^{i_0-1}\frac{\tau_{i_0}-\tau_{\ell}}{\tau_{\ell+k}-\tau_{i_0}}\geq D_k.
\end{equation}
First observe that by Lemma \ref{lem:tau2},
\[
\tau_{i_0}-\tau_{j^{(0)}}\geq \tau_{j^{(0)}+k+2}-\tau_{i_0}\geq \tau_{j^{(0)}+k}-\tau_{i_0}.
\]
Inserting this inequality in \eqref{eq:tau1} and applying Lemma \eqref{lem:tau2} directly to the terms in the product, we obtain the assertion \eqref{eq:tau1}. 

\textsc{Case 4: } $i_0-k=j^{(0)}<j=i_0$. \\
We have $j^{(0)}\leq i_0-2$ by \eqref{eq:assj0}. If  $m\leq j^{(0)}$,  just apply Lemma \ref{lem:tau} to obtain \eqref{eq:ulttoshow}. Thus we assume $m\geq j$. Since $i_0=j$ and $m\leq i_0$, we have $m=j$. The restrictions on the indices $j^{(0)},j$ yield that condition \eqref{eq:finalpha} takes the form
\begin{equation*}
\prod_{\ell=i_0-k+1}^{i_0-1}(\tau_{\ell+k}-\tau_{i_0})\geq \prod_{\ell=i_0-k+1}^{i_0-1}(\tau_{i_0}-\tau_{\ell}).
\end{equation*}
Thus, in order to show \eqref{eq:ulttoshow}, it is enough to prove that there exists a constant $D_k>0$ only depending on $k$ such that 
\begin{equation*}
\prod_{\ell=i_0-k+2}^{i_0-1}\frac{\tau_{i_0}-\tau_{\ell}}{\tau_{\ell+k}-\tau_{i_0}}\geq D_k.
\end{equation*}
But this is a consequence of Lemma \ref{lem:tau2}, finishing the proof of Case 4.

\textsc{Case 5: }$i_0-k=j^{(0)}<j\leq i_0-1$.\\
In this case, \eqref{eq:ulttoshow} becomes
\begin{equation}\label{eq:tautoshow5}
\tau_{i_0+1}-\tau_{i_0}\gtrsim \tau_{i_0}-\tau_{i_0-k+1}.
\end{equation}
and \eqref{eq:finalpha} is nothing else than
\begin{equation}\label{eq:lastcase}
\prod_{\ell=i_0-k+1}^{j} (\tau_{\ell+k}-\tau_{i_0}) \geq (\tau_{j+k}-\tau_j)\prod_{\ell=i_0-k+1}^{j-1}(\tau_{i_0}-\tau_{\ell}).
\end{equation}
For $j=i_0-k+1$, \eqref{eq:tautoshow5} is implied very easily from \eqref{eq:lastcase}. If we assume $j-j^{(0)}\geq 2$ and $m\leq j^{(0)}$, we just apply Lemma \ref{lem:tau} to obtain \eqref{eq:ulttoshow}. 
If $j-j^{(0)}\geq 2$ and $m\geq j$, showing \eqref{eq:tautoshow5} is equivalent to the existence of a constant $D_k>0$ only depending on $k$ such that
\begin{equation*}
\frac{(\tau_{j+k}-\tau_j)\prod_{\ell=i_0-k+2}^{j-1}(\tau_{i_0}-\tau_{\ell})}{\prod_{\ell=i_0-k+2}^{j}(\tau_{\ell+k}-\tau_{i_0})}\geq D_k.
\end{equation*}
This follows from the obvious inequality $\tau_{j+k}-\tau_j\geq \tau_{j+k}-\tau_{i_0}$ and from Lemma \ref{lem:tau2}. Thus, the proof of Case 5 is completed, thereby concluding the proof of Theorem \ref{thm:estwj}.
\end{proof}

We will use this result to prove lemmata connecting the $L^p$ norm of the function $g$ and the corresponding  characteristic interval $J$. Before we start, we need another simple

\begin{lem}\label{lem:estdiaginverse}
Let $C=(c_{ij})_{i,j=1}^n$ be a symmetric positive definite matrix. Then, for $(d_{ij})_{i,j=1}^n=C^{-1}$ we have
\[
c_{ii}^{-1}\leq d_{ii},\qquad 1\leq i\leq n.
\]
\end{lem}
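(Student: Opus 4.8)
The plan is to deduce the inequality from the Cauchy--Schwarz inequality applied in the Euclidean inner product after inserting the square roots of $C$. Since $C$ is symmetric and positive definite, it admits a symmetric positive definite square root $C^{1/2}$, with symmetric positive definite inverse $C^{-1/2}$, and in particular $c_{ii}=\langle Ce_i,e_i\rangle>0$, where $e_i$ denotes the $i$-th standard basis vector of $\bR^n$. The key computation is then
\[
1=\langle e_i,e_i\rangle=\langle C^{1/2}e_i,C^{-1/2}e_i\rangle\leq \|C^{1/2}e_i\|\cdot\|C^{-1/2}e_i\|,
\]
where the middle equality uses the symmetry of $C^{1/2}$, namely $\langle C^{1/2}e_i,C^{-1/2}e_i\rangle=\langle e_i,C^{1/2}C^{-1/2}e_i\rangle=\langle e_i,e_i\rangle$.

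To finish, I would identify the two norms on the right-hand side: $\|C^{1/2}e_i\|^2=\langle Ce_i,e_i\rangle=c_{ii}$ and $\|C^{-1/2}e_i\|^2=\langle C^{-1}e_i,e_i\rangle=d_{ii}$. Squaring the displayed inequality therefore yields $1\leq c_{ii}d_{ii}$, and dividing by $c_{ii}>0$ gives $c_{ii}^{-1}\leq d_{ii}$, as claimed.

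As an alternative route, one can use the Schur complement: after a symmetric permutation moving index $i$ to the last position, write $C=\begin{pmatrix}A & b\\ b^{T} & c_{ii}\end{pmatrix}$ with $A$ symmetric positive definite; the corresponding diagonal entry of $C^{-1}$ is $(c_{ii}-b^{T}A^{-1}b)^{-1}$, and since $b^{T}A^{-1}b\geq 0$ this is at least $c_{ii}^{-1}$. Either way there is no genuine obstacle; the only points to keep in mind are that positive definiteness of $C$ is exactly what guarantees $c_{ii}>0$ and the existence of $C^{\pm 1/2}$ (or, in the second argument, of $A^{-1}$).
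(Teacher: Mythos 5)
Your main argument is correct and takes a genuinely different route from the paper. The paper applies the spectral theorem to write $C=S\Lambda S^T$ with $S$ orthogonal, reads off $c_{ii}=\sum_\ell s_{i\ell}^2\lambda_\ell$ and $d_{ii}=\sum_\ell s_{i\ell}^2\lambda_\ell^{-1}$, and then invokes Jensen's inequality for the convex map $x\mapsto x^{-1}$ with the probability weights $s_{i\ell}^2$. You instead insert $C^{1/2}C^{-1/2}$ and apply Cauchy--Schwarz to $1=\langle C^{1/2}e_i,\,C^{-1/2}e_i\rangle$, which sidesteps any coordinate computation and any explicit appeal to convexity. The two are cousins --- Jensen for $x\mapsto x^{-1}$ is itself a one-line Cauchy--Schwarz --- but your packaging is more operator-theoretic and arguably a bit cleaner, at the modest cost of invoking the existence of the positive square root (which of course also rests on the spectral theorem). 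Your Schur-complement alternative is a third, again correct, route: it is the most ``structural'' of the three in that it produces the exact identity $d_{ii}=(c_{ii}-b^TA^{-1}b)^{-1}$ and exhibits the inequality as strict unless $b=0$, a refinement the other two arguments do not make visible.
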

\begin{proof}
Since $C$ is symmetric, $C$ is diagonalizable and we have 
\[
C=S\Lambda S^T,
\]
for some orthogonal matrix $S=(s_{ij})_{i,j=1}^n$ and for the diagonal matrix $\Lambda$ consisting of the eigenvalues $\lambda_1,\dots,\lambda_n$ of $C$. These eigenvalues are positive, since $C$ is positive definite. Clearly,
\[
C^{-1}=S\Lambda ^{-1}S^T.
\]
Let $i$ be an arbitrary integer in the range $1\leq i\leq n$. Then,
\[
c_{ii}=\sum_{\ell=1}^n s_{i\ell}^2 \lambda_\ell\qquad\text{and}\qquad d_{ii}=\sum_{\ell=1}^n s_{i\ell}^2 \lambda_\ell^{-1}.
\]
Since $\sum_{\ell=1}^n s_{i\ell}^2=1$ and the function $x\mapsto x^{-1}$ is convex on $(0,\infty)$, we conclude by Jensen's inequality
\[
c_{ii}^{-1}=\big(\sum_{\ell=1}^n s_{i\ell}^2 \lambda_\ell\big)^{-1}\leq \sum_{\ell=1}^n s_{i\ell}^2 \lambda_\ell^{-1}=d_{ii},
\]
and thus the assertion of the lemma.
\end{proof}

\begin{lem}\label{lem:orthsplineJinterval}Let $\mathcal T,\,\widetilde{\mathcal T}$ be as above and $g=\sum_{j=1}^M w_jN_j$ be the function in $\lin\{N_i:1\leq i\leq M\}$ that is orthogonal to every $\widetilde{N}_i$, $1\leq i\leq M-1$, with $(w_j)_{j=1}^M$ given in \eqref{eq:defwj}. Moreover, let $\varphi=g/\|g\|_2$  be the $L^2$-normalized orthogonal spline function corresponding to the mesh point $\tau_{i_0}$.
Then,
\[
\|\varphi\|_{L^p(J)}\sim\|\varphi\|_p\sim |J|^{1/p-1/2},\qquad 1\leq p\leq \infty,
\]
where $J$ is the characteristic interval associated to the point $\tau_{i_0}$ given in Definition \ref{def:characteristic}.
\end{lem}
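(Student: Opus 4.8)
The plan is to reduce the lemma to three norm estimates for $g$ and then divide by $\|g\|_2$. Write $\nu_0:=\nu_{j^{(0)}}=\tau_{j^{(0)}+k}-\tau_{j^{(0)}}$, the length of $J^{(0)}=[\tau_{j^{(0)}},\tau_{j^{(0)}+k}]$. Since $J^{(0)}$ is the union of its $k$ grid intervals and $J$ is a longest one among them, $|J|\sim\nu_0$, so it is enough to establish
\[
\|g\|_2\sim\nu_0^{-1/2},\qquad \|g\|_{L^p(J)}\gtrsim\nu_0^{1/p-1},\qquad \|g\|_p\lesssim\nu_0^{1/p-1}\quad(1\le p\le\infty);
\]
dividing by $\|g\|_2$ then yields $\|\varphi\|_{L^p(J)}\gtrsim\nu_0^{1/p-1/2}\sim|J|^{1/p-1/2}$ and $\|\varphi\|_p=\|g\|_p/\|g\|_2\lesssim|J|^{1/p-1/2}$, while trivially $\|\varphi\|_p\ge\|\varphi\|_{L^p(J)}$, so all three of $\|\varphi\|_{L^p(J)}$, $\|\varphi\|_p$ and $|J|^{1/p-1/2}$ are comparable.

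For the lower bounds I would use Theorem \ref{thm:estwj}, which gives $|w_{j^{(0)}}|\gtrsim b_{j^{(0)},j^{(0)}}$, together with Lemma \ref{lem:estdiaginverse} applied to the symmetric positive definite Gram matrix $(\langle N_i,N_j\rangle)$ and the estimate $\langle N_{j^{(0)}},N_{j^{(0)}}\rangle=\|N_{j^{(0)}}\|_2^2\sim\nu_0$ coming from \eqref{eq:deboorlpstab}; this gives $b_{j^{(0)},j^{(0)}}\ge\langle N_{j^{(0)}},N_{j^{(0)}}\rangle^{-1}\gtrsim\nu_0^{-1}$, hence $|w_{j^{(0)}}|\gtrsim\nu_0^{-1}$. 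Keeping only the $j^{(0)}$-term in \eqref{eq:deboorlpstab} gives $\|g\|_2\gtrsim|w_{j^{(0)}}|\,\nu_0^{1/2}\gtrsim\nu_0^{-1/2}$. For the localized estimate I would apply \eqref{eq:lpstab} with $j=j^{(0)}$: the interval $J_{j^{(0)}}$ occurring there is the longest grid subinterval of $[\tau_{j^{(0)}},\tau_{j^{(0)}+k}]=J^{(0)}$, which is precisely the characteristic interval $J$ by part (3) of Definition \ref{def:characteristic}; hence $|w_{j^{(0)}}|\lesssim|J|^{-1/p}\|g\|_{L^p(J)}$, i.e. $\|g\|_{L^p(J)}\gtrsim|J|^{1/p}|w_{j^{(0)}}|\sim\nu_0^{1/p-1}$.

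The main work is the upper bound $\|g\|_p\lesssim\nu_0^{1/p-1}$, obtained from $\|g\|_p\sim\big(\sum_\ell|w_\ell|^p\nu_\ell\big)^{1/p}$ (by \eqref{eq:deboorlpstab}) together with the off-diagonal decay of $B$. From \eqref{eq:formalphaj} every factor defining $\alpha_j$ lies in $[0,1]$, so $|\alpha_j|\le1$ for $i_0-k\le j\le i_0$; thus by \eqref{eq:defwj} and Theorem \ref{thm:maintool},
\[
|w_\ell|\le\sum_{j=i_0-k}^{i_0}|b_{j\ell}|\lesssim\sum_{j=i_0-k}^{i_0}\frac{\gamma^{|j-\ell|}}{\tau_{\max(j,\ell)+k}-\tau_{\min(j,\ell)}}.
\]
For $i_0-k\le j\le i_0$ one has $\tau_{\max(j,\ell)+k}-\tau_{\min(j,\ell)}\ge\max(\nu_j,\nu_\ell)\ge\max(\nu_m,\nu_\ell)$, where $\nu_m:=\min_{i_0-k\le s\le i_0}\nu_s$; moreover $\nu_m\sim\nu_0$ because $j^{(0)}\in\Lambda^{(0)}$ forces $\nu_0\le 2\nu_m$. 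Summing the geometric weight gives $|w_\ell|\lesssim\gamma^{d(\ell)}/\max(\nu_0,\nu_\ell)$ with $d(\ell):=\min_{i_0-k\le s\le i_0}|s-\ell|$, so that
\[
\sum_\ell|w_\ell|^p\nu_\ell\lesssim\sum_\ell\gamma^{pd(\ell)}\frac{\nu_\ell}{\max(\nu_0,\nu_\ell)^p}\le\nu_0^{1-p}\sum_\ell\gamma^{pd(\ell)}\lesssim\nu_0^{1-p},
\]
using $\nu_\ell/\max(\nu_0,\nu_\ell)^p\le\nu_0^{1-p}$ for $p\ge1$ and that $\sum_\ell\gamma^{pd(\ell)}$ is a convergent geometric series bounded by a constant depending only on $k$. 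Hence $\|g\|_p\lesssim\nu_0^{1/p-1}$; in particular $\|g\|_2\lesssim\nu_0^{-1/2}$, which closes the remaining estimate. The only genuinely delicate point is this last bound — namely controlling the denominators $\tau_{\max(j,\ell)+k}-\tau_{\min(j,\ell)}$ from below by $\nu_0$ uniformly in $j,\ell$ — and this is exactly where it is used that $j^{(0)}$ was chosen among the indices with almost minimal B-spline support (part (1) of Definition \ref{def:characteristic}).
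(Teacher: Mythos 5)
Your proof is correct. The lower-bound half of your argument (the chain $|w_{j^{(0)}}|\gtrsim b_{j^{(0)},j^{(0)}}\gtrsim a_{j^{(0)},j^{(0)}}^{-1}\gtrsim\nu_0^{-1}$ via Theorem \ref{thm:estwj}, Lemma \ref{lem:estdiaginverse} and the identification of $J_{j^{(0)}}$ in \eqref{eq:lpstab} with the characteristic interval $J$) is identical to the paper's. Where you diverge is the upper bound $\|g\|_p\lesssim\nu_0^{1/p-1}$: the paper writes $g=\sum_{j=i_0-k}^{i_0}\alpha_jN_j^*$ as a $(k+1)$-term combination of dual B-splines, applies the dual stability estimate \eqref{eq:lpstabdual} with $|\alpha_j|\le1$, and uses the near-minimality of $\nu_{j^{(0)}}$ among $\nu_{i_0-k},\dots,\nu_{i_0}$ to get a three-line bound. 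You instead estimate the B-spline coefficients $w_\ell$ pointwise by Theorem \ref{thm:maintool} (obtaining $|w_\ell|\lesssim\gamma^{d(\ell)}/\max(\nu_0,\nu_\ell)$) and feed the result into \eqref{eq:deboorlpstab}. Both are legitimate: \eqref{eq:lpstabdual} and Theorem \ref{thm:maintool} are each consequences of Shadrin's theorem and are available in the paper; the paper's version is shorter, while yours reproves (in this special case) essentially what underlies Lemma \ref{lem:lporthspline}'s estimate \eqref{eq:wj}, so it is a bit more hands-on and makes visible exactly where the near-minimality of $\nu_{j^{(0)}}$ (part (1) of Definition \ref{def:characteristic}) enters, as you note. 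One small point worth flagging: your display $\sum_\ell|w_\ell|^p\nu_\ell\lesssim\nu_0^{1-p}$ is written for finite $p$; for $p=\infty$ the corresponding $\ell^\infty$ form $|w_\ell|\lesssim\nu_0^{-1}$ follows from the same coefficient bound, so no gap, but the case should be mentioned since the lemma claims $1\le p\le\infty$.
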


\begin{proof}
As a consequence of inequality \eqref{eq:lpstab} in Proposition \ref{prop:lpstab}, we get
\begin{equation}\label{eq:orthlpnorm:1}
\|g\|_{L^p(J)} \gtrsim |J|^{1/p} |w_{j^{(0)}}|.
\end{equation}
By Theorem \ref{thm:estwj}, $|w_{j^{(0)}}|\gtrsim b_{j^{(0)},j^{(0)}}$, where we recall that $(b_{ij})_{i,j=1}^M$ is the inverse of the Gram matrix $(a_{ij})_{i,j=1}^M=(\langle N_i,N_j\rangle)_{i,j=1}^M$. Now we invoke Lemma \ref{lem:estdiaginverse} and identity \eqref{eq:deboorlpstab} of Proposition \ref{prop:lpstab} to conclude from \eqref{eq:orthlpnorm:1}
\begin{align*}
\|g\|_{L^p(J)} & \gtrsim |J|^{1/p} b_{j^{(0)},j^{(0)}}\geq |J|^{1/p} a_{j^{(0)},j^{(0)}}^{-1} \\
				& = |J|^{1/p} \|N_{j^{(0)}}\|^{-2}_2 \gtrsim |J|^{1/p} \nu_{j^{(0)}}^{-1}
\end{align*}
Since, by construction, $J$ is the maximal subinterval of $J^{(0)}$ and there are exactly $k$ subintervals of $J^{(0)}$, we finally get 
\begin{equation}\label{eq:Jinterval1}
\|g\|_{L^p(J)} \gtrsim |J|^{1/p-1}.
\end{equation}

On the other hand, $g=\sum_{j=i_0-k}^{i_0}\alpha_j N_j^*$, so we use equation \eqref{eq:lpstabdual} of Proposition \ref{prop:lpstab} to obtain
\[
\|g\|_p \lesssim \Big(\sum_{j=i_0-k}^{i_0} |\alpha_j|^p \nu_j^{1-p}\Big)^{1/p}.
\]
Since $|\alpha_j|\leq 1$ for all $j$ and $\nu_{j^{(0)}}$ is minimal (up to the factor $2$) among the values $\nu_j,\ i_0-k\leq j\leq i_0$, we can estimate this further by 
\[
\|g\|_p\lesssim \nu_{j^{(0)}}^{1/p-1}.
\]
We now use the inequality $|J|\leq \nu_{j^{(0)}}=|J^{(0)}|$ from the construction of $J$ to get
\begin{equation}\label{eq:Jinterval2}
\|g\|_p \lesssim |J|^{1/p-1}
\end{equation}
The assertion of the lemma now follows from the two inequalites \eqref{eq:Jinterval1} and \eqref{eq:Jinterval2} after renormalization.
\end{proof}
By $d_{\mathcal T} (x)$ we denote the number of points in $\mathcal T$ between $x$ and $J$ counting endpoints of $J$. Correspondingly, for an interval $V\subset [0,1]$, by $d_{\mathcal T}(V)$ we denote the number of points in $\mathcal T$ between $V$ and $J$ counting endpoints of both $J$ and $V$.

\begin{lem}\label{lem:lporthspline}
Let $\mathcal T,\widetilde{\mathcal T}$ be as above and $g=\sum_{j=1}^M w_jN_j$ be orthogonal to every $\widetilde{N}_i$, $1\leq i\leq M-1$, with $(w_j)_{j=1}^M$ as in \eqref{eq:defwj}. Moreover, let $\varphi=g/\|g\|_2$  be the normalized orthogonal spline function corresponding to $\tau_{i_0}$ and $\gamma<1$ the constant from Theorem \ref{thm:maintool} depending only on the spline order $k$.  Then we have
\begin{equation}\label{eq:wj}
|w_j|\lesssim \frac{\gamma^{d_{\mathcal T}(\tau_j)}}{|J|+\dist(\supp N_j,J)+\nu_j}\quad\text{for all }1\leq j\leq M.
\end{equation}
Moreover, if $x<\inf J$, we have
\begin{equation}\label{eq:phiplinks}
\|\varphi\|_{L^p(0,x)}
\lesssim \frac{\gamma^{d_{\mathcal T}(x)}|J|^{1/2}}{(|J|+\dist(x,J))^{1-1/p}},\qquad 1\leq p\leq \infty.
\end{equation}
Similarly, for $x>\sup J$,
\begin{equation}\label{eq:phiprechts}
\|\varphi\|_{L^p(x,1)}
\lesssim \frac{\gamma^{d_{\mathcal T}(x)}|J|^{1/2}}{(|J|+\dist(x,J))^{1-1/p}},\qquad 1\leq p\leq \infty.
\end{equation}
\end{lem}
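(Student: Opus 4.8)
The three estimates all flow from Theorem~\ref{thm:maintool}, so the plan is to first establish \eqref{eq:wj}, then derive \eqref{eq:phiplinks} and \eqref{eq:phiprechts} from it together with Lemma~\ref{lem:orthsplineJinterval} and the B-spline stability inequality \eqref{eq:deboorlpstab}. For \eqref{eq:wj}, recall that $w_j=\sum_{i=i_0-k}^{i_0}\alpha_i b_{ij}$ with $|\alpha_i|\le 1$, so Theorem~\ref{thm:maintool} gives
\[
|w_j|\le\sum_{i=i_0-k}^{i_0}|b_{ij}|\lesssim\sum_{i=i_0-k}^{i_0}\frac{\gamma^{|i-j|}}{\tau_{\max(i,j)+k}-\tau_{\min(i,j)}}.
\]
Since $j^{(0)}$ lies within distance $k$ of $i_0$ and the characteristic interval $J$ is a grid interval inside $[\tau_{j^{(0)}},\tau_{j^{(0)}+k}]$, each denominator $\tau_{\max(i,j)+k}-\tau_{\min(i,j)}$ is comparable (up to a factor depending only on $k$, by adding at most $2k$ consecutive knot gaps) to $|J|+\dist(\supp N_j,J)+\nu_j$; and since all indices $i$ in the sum are within $k$ of $i_0$ (hence of $j^{(0)}$), we have $\gamma^{|i-j|}\sim\gamma^{|j^{(0)}-j|}\sim\gamma^{d_{\mathcal T}(\tau_j)}$ because the number of knots between $\tau_j$ and $J$ equals $|j-j^{(0)}|$ up to an additive constant bounded by $k$, and the finite sum over $i$ only costs a $k$-dependent constant. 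This yields \eqref{eq:wj}.

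For \eqref{eq:phiplinks}, fix $x<\inf J$ and write $\varphi=g/\|g\|_2$ with $\|g\|_2\sim|J|^{-1/2}$ by Lemma~\ref{lem:orthsplineJinterval} (the case $p=2$). On $(0,x)$, only the B-splines $N_j$ whose support meets $(0,x)$ contribute; for such $j$ we have $\dist(\supp N_j,J)+\nu_j\gtrsim\dist(x,J)$ roughly, but more carefully I would apply \eqref{eq:deboorlpstab} to the spline $g\charfun_{(0,x)}$: up to the $k$-dependent overlap of B-spline supports,
\[
\|g\|_{L^p(0,x)}\lesssim\Big(\sum_{\tau_j< x}|w_j|^p\nu_j\Big)^{1/p}.
\]
Insert \eqref{eq:wj}. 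The factors $\gamma^{d_{\mathcal T}(\tau_j)}$ decay geometrically as $\tau_j$ moves left away from $J$, while $\nu_j/(|J|+\dist(\supp N_j,J)+\nu_j)^p\le \nu_j/(|J|+\dist(\supp N_j,J))^{p-1}\cdot(|J|+\dist(\supp N_j,J))^{-1}$; grouping the knots $\tau_j$ into blocks according to the value of $d_{\mathcal T}(\tau_j)$ and summing the telescoping lengths $\sum\nu_j$ over each block against the geometric weight $\gamma^{p\cdot d_{\mathcal T}(\tau_j)}$ gives a bound of order $\gamma^{p\,d_{\mathcal T}(x)}/(|J|+\dist(x,J))^{p-1}$. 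Taking $p$-th roots and multiplying by $\|g\|_2^{-1}\sim|J|^{1/2}$ produces \eqref{eq:phiplinks}; the endpoint $p=\infty$ follows either by a direct pointwise estimate $|\varphi(t)|\lesssim\max_{\supp N_j\ni t}|w_j|\cdot\|g\|_2^{-1}$ using $\|N_j\|_\infty=1$, or by letting $p\to\infty$. The estimate \eqref{eq:phiprechts} is identical with left and right interchanged.

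The main obstacle is the bookkeeping in the denominator: one must check that, for the $O(k)$ indices $i$ contributing to $w_j$, the quantity $\tau_{\max(i,j)+k}-\tau_{\min(i,j)}$ is genuinely comparable to $|J|+\dist(\supp N_j,J)+\nu_j$ with constants depending only on $k$, and that the exponent $|i-j|$ in Theorem~\ref{thm:maintool} can be replaced by $d_{\mathcal T}(\tau_j)$ up to a bounded additive shift. Both rest on the facts that $|i_0-j^{(0)}|\le k$, that $J\subset[\tau_{j^{(0)}},\tau_{j^{(0)}+k}]$ is one of $k$ grid intervals, and that inserting or deleting up to $k$ consecutive knot gaps changes a length by at most a $k$-dependent factor once one such gap is already present. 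The summation step in the second and third inequalities is then a routine geometric-series estimate organized by the level sets of $d_{\mathcal T}$, as in \cite{GevKam2004}.
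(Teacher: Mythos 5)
Your proposal follows essentially the same route as the paper. For \eqref{eq:wj}, both arguments bound $|\alpha_i|\le 1$, apply Theorem~\ref{thm:maintool}, and then show that the denominator $\tau_{\max(i,j)+k}-\tau_{\min(i,j)}$ dominates (up to a $k$-dependent constant) each of $\nu_j$, $|J|$, and $\dist(\supp N_j,J)$ separately, while $\gamma^{|i-j|}\lesssim \gamma^{d_{\mathcal T}(\tau_j)}$ because $|i-j|$ differs from $d_{\mathcal T}(\tau_j)$ by at most $O(k)$. (You claim two-sided comparability; only the lower bound on the denominator is needed, and it is the one the paper proves; the comparability claim is stronger than necessary but not wrong.) For \eqref{eq:phiplinks}--\eqref{eq:phiprechts}, both proofs truncate to the B-splines whose supports meet $(0,x)$, apply the $L^p$-stability \eqref{eq:deboorlpstab}, insert \eqref{eq:wj}, and sum the geometric tail; both then divide by $\|g\|_2\sim|J|^{-1/2}$ from Lemma~\ref{lem:orthsplineJinterval}.

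The one place where your sketch is looser than the paper is the summation step. Your plan groups knots into level sets of $d_{\mathcal T}$ and appeals to a telescoping argument over $\sum \nu_j$; this can be made to work, but it is not needed. The paper's bound is more direct: for every contributing index $i<s$ one has $\nu_i\le |J|+\dist(\supp N_i,J)+\nu_i$ and $\dist(\supp N_i,J)+\nu_i\ge \dist(x,J)$, hence
\[
\frac{\nu_i^{1/p}}{|J|+\dist(\supp N_i,J)+\nu_i}
\le \bigl(|J|+\dist(\supp N_i,J)+\nu_i\bigr)^{1/p-1}
\le \bigl(|J|+\dist(x,J)\bigr)^{1/p-1},
\]
so every term in the $\ell^p$-sum carries the same denominator factor, and the only remaining sum is the scalar geometric series $\|(\gamma^{d_{\mathcal T}(\tau_i)})_{i<s}\|_{\ell^p}\lesssim\gamma^{d_{\mathcal T}(x)}$. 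This avoids the blocking bookkeeping and gives the exponent $1-1/p$ immediately. Aside from this, your plan is correct and matches the paper's argument.
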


\begin{proof}
We begin with showing \eqref{eq:wj}.
By definition of $w_j$ and  $\alpha_\ell$ (see \eqref{eq:defwj} and \eqref{eq:alpha2}), we have
\[
|w_j|\lesssim \max_{i_0-k\leq \ell\leq i_0} |b_{j\ell}|.
\]
Now we invoke Theorem \ref{thm:maintool} to deduce
\begin{equation}\label{eq:startwj}
\begin{aligned}
|w_j|
&\lesssim \frac{\max_{i_0-k\leq \ell\leq i_0}\gamma^{|\ell-j|}}{\min_{i_0-k\leq \ell\leq i_0}(\tau_{\max(\ell,j)+k}-\tau_{\min(\ell, j)})} \\
&\lesssim \frac{\gamma^{d_{\mathcal T}(\tau_j)}}{\min_{i_0-k\leq \ell\leq i_0}(\tau_{\max(\ell,j)+k}-\tau_{\min(\ell, j)})},
\end{aligned}
\end{equation}
where the second inequality follows from the location of $J$ in the interval $[\tau_{i_0-k},\tau_{i_0+k}]$. It remains to estimate the minimum in the denominator of this expression. Let $\ell$ be an arbitrary natural number in the range $i_0-k\leq \ell\leq i_0$. First we observe
\begin{equation}
\tau_{\max(\ell,j)+k}-\tau_{\min(\ell,j)}\geq \tau_{j+k}-\tau_j=|\supp N_j|=\nu_j.
\label{eq:phi1}
\end{equation}
Moreover, by definition of $J$,
\begin{equation}
\begin{aligned}
\tau_{\max(\ell,j)+k}-\tau_{\min(\ell,j)} & \geq  \min_{i_0-k\leq r\leq i_0} (\tau_{r+k}-\tau_r) \geq |J^{(0)}|/2\geq  |J|/2.
\end{aligned}
\label{eq:phi2}
\end{equation}
If now $j\geq \ell$,
\begin{equation}
\tau_{\max(\ell,j)+k}-\tau_{\min(\ell,j)}=\tau_{j+k}-\tau_\ell\geq \tau_{j+k}-\tau_{i_0}\geq \max(\tau_j-\sup J^{(0)},0),
\label{eq:phi3}
\end{equation}
since $\tau_{i_0}\leq \sup J^{(0)}$. But $\max(\tau_j-\sup J^{(0)},0)=d([\tau_j,\tau_{j+k}],J^{(0)})$ due to the fact that $\inf J^{(0)}\leq \tau_{i_0}\leq \tau_{\ell+k}\leq \tau_{j+k}$ for the current choice of $j$.
Additionally, $d([\tau_j,\tau_{j+k}],J)\leq |J^{(0)}|+ d([\tau_j,\tau_{j+k}], J^{(0)})$. So, as a consequence of \eqref{eq:phi3},
\begin{equation}
\tau_{\max(\ell,j)+k}-\tau_{\min(\ell,j)}\geq d([\tau_j,\tau_{j+k}],J)-|J^{(0)}|.
\label{eq:phi4}
\end{equation}
An analogous calculation proves \eqref{eq:phi4} also in the case $j\leq \ell$.
We now combine our inequality \eqref{eq:startwj} with \eqref{eq:phi1}, \eqref{eq:phi2} and \eqref{eq:phi4} to obtain the assertion \eqref{eq:wj}.

We now consider the integral $\big(\int_0^x |g(t)|^p\dif t\big)^{1/p}$ for $x<\inf J$. The analogous estimate  \eqref{eq:phiprechts} follows from a similar argument. Let $\tau_s$ be the first grid point in $\mathcal T$ to the right of $x$ and observe that that $\supp N_r\cap [0,\tau_s)=\emptyset$ for $r\geq s$.
Then we get
\[
\|g\|_{L^p(0,x)}\leq \|g\|_{L^p(0,\tau_s)}\leq \Big\|\sum_{i=1}^{s-1} w_i N_i\Big\|_p.
\]
By \eqref{eq:deboorlpstab} of Proposition \ref{prop:lpstab}, we conclude further
\[
\|g\|_{L^p(0,x)}\leq \big\|\big(w_i\nu_i^{1/p}\big)_{i=1}^{s-1}\big\|_{\ell^p}.
\]
We now use \eqref{eq:wj} for $w_i$ and get
\[
\|g\|_{L^p(0,x)}\lesssim \Big\|\Big(\frac{\gamma^{d_{\mathcal T}(\tau_i)}\nu_i^{1/p}}{|J|+\dist(\supp N_i,J)+\nu_i}\Big)_{i=1}^{s-1}\Big\|_{\ell^p}.
\]
Since $\nu_i\leq |J|+\dist(\supp N_i,J)+\nu_i$ for all $1\leq i\leq M$ and $\dist(\supp N_i,J)+\nu_i\geq \dist(x,J)$ for all $1\leq i\leq s-1$, we estimate the last display further to get 
\[
\|g\|_{L^p(0,x)}\lesssim \big(|J|+\dist(x,J)\big)^{-1+1/p}\|(\gamma^{d_{\mathcal T}(\tau_i)})_{i=1}^{s-1}\|_{\ell^p}.
\]
This $\ell^p$-norm is a geometric sum and the biggest term is $\gamma^{d_{\mathcal T}(x)}$, so we obtain
\[
\|g\|_{L^p(0,x)} \lesssim \frac{\gamma^{d_{\mathcal T}(x)}}{(|J|+\dist(x,J))^{1-1/p}}.
\]
This concludes the proof of the lemma, since we have seen in the proof of Lemma \ref{lem:orthsplineJinterval} that $\|g\|_2\sim |J|^{-1/2}$.
\end{proof}
\begin{rem}\label{rem:linftyorthspline}
Analogously we obtain the inequality
\begin{align*}
\sup_{\tau_{j-1}\leq t\leq \tau_j}|\varphi(t)|&\lesssim \max_{j-k\leq i\leq j-1} \frac{\gamma^{d_{\mathcal T}(\tau_i)}|J|^{1/2}}{|J|+\dist(\supp N_i,J)+\nu_i} \\
&\lesssim \frac{\gamma^{d_{\mathcal T}(\tau_{j})}|J|^{1/2}}{|J|+\dist(J,[\tau_{j-1},\tau_j])+|[\tau_{j-1},\tau_j]|},
\end{align*}
since for all integers $i$ with $j-k\leq i\leq j-1$ we have $[\tau_{j-1},\tau_j]\subset \supp N_i$.
\end{rem}
\section{Combinatorics of characteristic intervals}\label{sec:comb}
Let $(t_n)_{n=0}^\infty$ be an admissible sequence of points and $(f_n)_{n=-k+2}^\infty$ the corresponding orthonormal spline functions of order $k$.
For $n\geq 2$, the associated partitions $\mathcal T_n$ to $f_n$ are defined to consist of the grid points $(t_j)_{j=0}^n$, the knots $t_0=0$ and $t_1=1$ having both multiplicity $k$ in $\mathcal T_n$.
If $n\geq 2$, we denote by $\ J_n^{(0)}$ and $J_n$ the characteristic intervals $J^{(0)}$ and $J$ from Definition \ref{def:characteristic} associated to the new grid point $t_n$. If $n$ is in the range $-k+2\leq n\leq 1$, we additionally set $J_n:=[0,1]$. For any $x\in [0,1]$, we define $d_n(x)$ to be the number of grid points in $\mathcal T_n$ between $x$ and $J_n$ counting endpoints of $J_n$. Moreover, for a subinterval $V$ of $[0,1]$, we denote by $d_n(V)$ the number of knots in $\mathcal T_n$ between $V$ and $J_n$ counting endpoints of both $V$ and $J_n$.
Finally, if $\mathcal T_n$ is of the form
\begin{align*}
\mathcal T_n=(0=\tau_{n,1}&=\dots=\tau_{n,k}<\tau_{n,k+1}\leq&\\
&\leq\dots\leq\tau_{n,n+k-1}<\tau_{n,n+k}=\dots=\tau_{n,n+2k-1}=1),
\end{align*}
and if $t_n=\tau_{n,i_0}$, then we denote by $t_n^{+\ell}$ the point $\tau_{n,i_0+\ell}$.

For the proof of the central Lemma \ref{lem:jinterval} of this section, we need the combinatorial Lemma of Erd\H{o}s and Szekeres:
\begin{lem}[Erd\H{o}s-Szekeres]\label{lem:erdsze}
Let $n$ be an integer. Every sequence $(x_1,\dots,$ $x_{(n-1)^2+1})$ of length $(n-1)^2+1$ contains a monotone sequence of length $n$.
\end{lem}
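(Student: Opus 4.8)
The plan is to give the classical pigeonhole argument of Erd\H{o}s and Szekeres. To each index $i$ with $1\leq i\leq (n-1)^2+1$ I would associate a pair of integers $(a_i,b_i)$, where $a_i$ is the maximal length of a non-decreasing subsequence of $(x_1,\dots,x_{(n-1)^2+1})$ that ends with the term $x_i$, and $b_i$ is the maximal length of a non-increasing subsequence ending with $x_i$. The first step is to record the trivial bounds $a_i\geq 1$ and $b_i\geq 1$, and to argue by contradiction: assume that there is no monotone subsequence of length $n$, so that $1\leq a_i\leq n-1$ and $1\leq b_i\leq n-1$ for every $i$.

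The second step is the key injectivity observation: the map $i\mapsto (a_i,b_i)$ is injective. Indeed, take $i<j$. If $x_i\leq x_j$, then appending $x_j$ to a longest non-decreasing subsequence ending at $x_i$ produces a non-decreasing subsequence ending at $x_j$ of length $a_i+1$, so $a_j\geq a_i+1>a_i$. If instead $x_i\geq x_j$, the same reasoning with non-increasing subsequences gives $b_j\geq b_i+1>b_i$. Since at least one of $x_i\leq x_j$ or $x_i\geq x_j$ holds, in either case $(a_i,b_i)\neq(a_j,b_j)$.

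The final step is to count: under the contradiction hypothesis the pairs $(a_i,b_i)$ all lie in $\{1,\dots,n-1\}\times\{1,\dots,n-1\}$, a set of cardinality $(n-1)^2$, while there are $(n-1)^2+1$ indices $i$. By the pigeonhole principle the map $i\mapsto(a_i,b_i)$ cannot be injective, contradicting the second step. Hence some $a_i$ or some $b_i$ must equal $n$, i.e.\ there is a monotone subsequence of length $n$.

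There is essentially no serious obstacle here; the only point requiring a little care is the dichotomy in the injectivity step, namely making sure that the case of equal values $x_i=x_j$ is covered (it is, since such a pair is simultaneously non-decreasing and non-increasing, so either branch of the argument applies). One should also note at the outset that the statement is vacuous or trivial for $n\leq 1$, so one may assume $n\geq 2$, which guarantees $(n-1)^2\geq 1$ and makes the pigeonhole count meaningful.
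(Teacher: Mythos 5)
Your argument is the standard Seidenberg-style pigeonhole proof of the Erd\H{o}s--Szekeres theorem and it is correct and complete: the pair $(a_i,b_i)$ of longest non-decreasing/non-increasing endings is well-defined, the dichotomy $x_i\leq x_j$ or $x_i\geq x_j$ gives injectivity of $i\mapsto(a_i,b_i)$, and the pigeonhole count over $\{1,\dots,n-1\}^2$ forces some $a_i$ or $b_i$ to reach $n$. The paper itself supplies no proof of this lemma, citing it as classical, so there is nothing in the source to contrast with; your note that the $n\leq 1$ cases are trivial is appropriate and closes the only loose end.
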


We now use this result to prove a lemma about the combinatorics of characteristic intervals $J_n$:

\begin{lem}\label{lem:jinterval}
Let $x,y\in (t_n)_{n=0}^\infty$ such that $x<y$ and $0\leq \beta\leq 1/2$. Then there exists a constant $F_{k}$ only depending on $k$ such that
\[
N_0:=\card\{n:J_n\subseteq [x,y], |J_n|\geq(1-\beta)|[x,y]|\} \leq F_{k},
\]
where $\card E$ denotes the cardinality of the set $E$.
\end{lem}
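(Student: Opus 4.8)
The plan is to bound the number of indices $n$ for which the characteristic interval $J_n$ sits inside $[x,y]$ and occupies almost all of it. The key point is that such an $n$ forces the newly inserted point $t_n$ to lie in (or very near) $[x,y]$, and more importantly forces the grid $\mathcal T_n$ to be extremely sparse in the vicinity of $[x,y]$: by the construction in Definition \ref{def:characteristic}, $J_n$ is (up to a factor $k$) the shortest among the $k+1$ B-spline supports $[\tau_{n,\ell},\tau_{n,\ell+k}]$ with $i_0-k\le \ell\le i_0$, so if $|J_n|\ge(1-\beta)|[x,y]|$ with $\beta\le 1/2$, then all these $k+1$ supports have length $\sim |[x,y]|$, and hence only boundedly many knots of $\mathcal T_n$ can lie strictly between $x$ and $y$ other than those involved in defining $J_n$. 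First I would make this quantitative: show there is a constant depending only on $k$ bounding the number of knots of $\mathcal T_n$ in the interior of a fixed enlargement of $[x,y]$.

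Next I would exploit that the partitions $\mathcal T_n$ are nested and increasing: $\mathcal T_{n-1}\subset\mathcal T_n$, with exactly one point $t_n$ added at each step. So if $n_1<n_2<\dots$ are the indices counted by $N_0$, then at each such step a point $t_{n_i}$ is inserted into $[x,y]$ (since $J_{n_i}\subseteq[x,y]$ and $t_{n_i}\in J_{n_i}^{(0)}\subseteq$ a bounded neighbourhood of $[x,y]$ determined by the $k+1$ supports). Because each $t_{n_i}$ persists in all later partitions, and each partition near $[x,y]$ can hold only $O_k(1)$ interior points, the number of such insertions before the region becomes ``full'' is $O_k(1)$ — but one must be careful, because a point can have multiplicity up to $k$, and also because after the region fills up one could in principle still have a later $n$ with $J_n$ short and inside $[x,y]$ only if $[x,y]$ itself got subdivided, contradicting $|J_n|\ge(1-\beta)|[x,y]|$. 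This is where the Erd\H{o}s--Szekeres lemma enters: I expect it is used to extract, from a long list of such indices, a long monotone subsequence of the left (or right) endpoints of the $J_{n_i}$, or of the relevant knot positions $t_{n_i}^{+\ell}$, and then to derive a contradiction with the multiplicity bound or with the geometric decay of interval lengths — a monotone run of length $>k+1$ of nested comparable subintervals inside $[x,y]$ is impossible since each insertion strictly shrinks some support by a definite amount while $|J_n|$ must stay $\ge(1-\beta)|[x,y]|$.

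Concretely, the steps I would carry out are: (1) from $J_n\subseteq[x,y]$ and $|J_n|\ge(1-\beta)|[x,y]|$, deduce $[\tau_{n,i_0-k},\tau_{n,i_0+k}]$ has length $\lesssim_k |[x,y]|$ and lies in a fixed $k$-fold enlargement of $[x,y]$, hence the at most $2k$ knots $\tau_{n,i_0-k},\dots,\tau_{n,i_0+k}$ are the only ones of $\mathcal T_n$ in a neighbourhood whose size is comparable to $|[x,y]|$; (2) record that these positions, together with $t_n$, take values in a set of at most $2k+1$ points which refines as $n$ increases; (3) suppose $N_0>(something)^2+1$ with the relevant threshold a function of $k$ only, list the indices increasingly, and apply Lemma \ref{lem:erdsze} to a suitably chosen real sequence (e.g. the positions $t_{n_i}$, or $\mathrm{dist}(t_{n_i},x)/|[x,y]|$) to get a monotone block of length $k+2$; (4) along such a monotone block the corresponding characteristic intervals are strictly nested, so their lengths strictly decrease — yet each is $\ge(1-\beta)|[x,y]|\ge\frac12|[x,y]|$ and each is contained in the previous one, forcing them to overlap in more than half, which after $k+2$ steps is impossible by a packing/pigeonhole count; (5) conclude $N_0\le F_k$.

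The main obstacle I anticipate is step (1)–(4): translating ``$J_n$ occupies most of $[x,y]$'' into a clean statement that the grid $\mathcal T_n$ is uniformly sparse near $[x,y]$ and that successive such $J_n$ are genuinely nested rather than merely overlapping — the multiplicity-up-to-$k$ feature of admissible sequences and the fact that the new point $t_n$ need not lie inside $[x,y]$ itself (only inside the larger interval $J_n^{(0)}$) are the delicate points, and getting the Erd\H{o}s--Szekeres application to bite requires choosing exactly the right scalar quantity to which monotonicity is applied.
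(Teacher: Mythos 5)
Your high-level plan matches the paper: pigeonhole to push $t_n$ toward one end, then Erd\H{o}s--Szekeres to extract a monotone subsequence of the $t_{n_i}$, then derive a contradiction separately in the increasing and decreasing cases. The increasing case also goes as you expect (more than $k$ increasing $t_{m_i}$ to the right of $J_{m_{k+1}}$ would put $\ge k$ knots of $\mathcal T_{m_{k+1}}$ strictly between $\inf J_{m_{k+1}}$ and $t_{m_{k+1}}$, contradicting the location of $J_{m_{k+1}}$ inside $J^{(0)}_{m_{k+1}}$).

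The gap is in the decreasing case, which is where the real work lies, and your step (4) does not actually produce a contradiction. Along a decreasing block the intervals $J_{n_i}$ are indeed weakly nested (each is a single grid interval in the finer mesh $\mathcal T_{n_i}$, and any two of them overlap when $\beta<1/2$), but nothing forces them to shrink: a later $t_n$ can be inserted just outside $[x,y]$, leaving $J_n$ identical to the previous one, so ``strictly nested, strictly decreasing lengths'' is false, and ``each $\ge \tfrac12|[x,y]|$ and contained in the previous one is impossible after $k+2$ steps'' is not a valid packing statement (all the $J_{n_i}$ could literally coincide). Relatedly, your step (1) overstates what the minimality in Definition~\ref{def:characteristic} gives: you only get that every support $[\tau_\ell,\tau_{\ell+k}]$, $i_0-k\le\ell\le i_0$, has length $\gtrsim |[x,y]|$, not $\sim|[x,y]|$, so step (3) (``only $O_k(1)$ knots near $[x,y]$'') does not follow either — knots can cluster just outside $[x,y]$. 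What actually closes the argument in the paper is a different packing: after pigeonholing the decreasing $t_{m_i}$ into one of the $\le k$ grid intervals $[s_\ell,s_{\ell+1}]$ of $\mathcal T_{m_1}$ between $\inf J_{m_1}$ and $t_{m_1}$, one forms the intervals $I_\mu=(t_{r_{\mu k}},t_{r_{\mu k}}^{+k})$; the decreasing order makes these \emph{pairwise disjoint} inside $[s_\ell,s_{\ell+1}]$, so one of them is short, $|I_\mu|\le |[s_\ell,s_{\ell+1}]|/N_4$, while the minimality property gives the crucial reverse inequality $|J^{(0)}_{r_{\mu k}}|\le 2(t^{+k}_{r_{\mu k}}-t_{r_{\mu k}})=2|I_\mu|$; combined with $|J^{(0)}_{r_{\mu k}}|\ge(1-\beta)|[x,y]|\ge(1-\beta)|J_{m_1}|\ge(1-\beta)|[s_\ell,s_{\ell+1}]|$ this forces $N_4\le 2/(1-\beta)\le 4$. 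Your proposal lacks both the disjoint-$I_\mu$ packing and the use of the $\le 2\cdot\min$ side of the minimality condition, which are precisely what turn ``nested and not too small'' into a finite bound.
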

\begin{proof}
Let $N_0$ be defined as above.
If $n$ is an index such that $J_n\subseteq [x,y]$ and $|J_n|\geq (1-\beta)|[x,y]|$, then, by definition of $J_n$, we have $t_n\in[0,(1-\beta)x+\beta y]\cup [\beta x+(1-\beta)y,1]$. Thus, by the pigeon hole principle, in  one of the two sets $[0,(1-\beta)x+\beta y]$ and  $[\beta x+(1-\beta)y,1]$, there are at least 
\[
N_1:=\Big\lfloor \frac{N_0-1}{2}\Big\rfloor +1 
\]
indices $n$ with $J_n\subset [x,y]$ and $|J_n|\geq (1-\beta)|[x,y]|$.  Assume without loss of generality that this set is $[\beta x+(1-\beta)y,1]$.  Now, let $(n_i)_{i=1}^{N_1}$ be an increasing sequence of indices such that $t_{n_i}\in [\beta x+(1-\beta)y,1]$ and $J_{n_i}\subset [x,y]$, $ |J_{n_i}|\geq (1-\beta)|[x,y]|$  for every $1\leq i\leq N_1$. Observe that for such $i$, $J_{n_i}$ is to the left of $t_{n_i}$.
 By the Erd\H{o}s-Szekeres-Lemma \ref{lem:erdsze}, the sequence $(t_{n_i})_{i=1}^{N_1}$ contains a monotone subsequence $(t_{m_i})_{i=1}^{N_2}$ of length
\[
N_2:=\lfloor\sqrt{N_1-1}\rfloor +1.
\]

If $(t_{m_i})_{i=1}^{N_2}$ is increasing, we obtain that $N_2\leq k$. Indeed, if $N_2\geq k+1$,
there are at least $k$ points (namely $t_{m_1},\dots,t_{m_k}$) in the sequence $\cT_{m_{k+1}}$ between $\inf J_{m_{k+1}}$ and $t_{m_{k+1}}$. This is in conflict with the location of $J_{m_{k+1}}$.

If $(t_{m_i})_{i=1}^{N_2}$ is decreasing, we let
\[
s_1\leq \dots \leq s_L
\]
be an enumeration of the elements in the sequence $\cT_{m_1}$ such that $\inf J_{m_1}\leq s\leq t_{m_1}$.
By definition of $J_{m_1}$, we obtain that $L\leq k+1$.  Thus, there are at most $k$ intervals $[s_\ell,s_{\ell+1}], 1\leq \ell\leq L-1$, contained in $[\inf J_{m_1},t_{m_1}]$.
 Again, by the pigeon hole principle, there exists one index $1\leq \ell\leq L-1$ such that the interval $[s_\ell,s_{\ell+1}]$ contains (at least)
\[
N_3:=\Big\lfloor \frac{N_2-1}{k}\Big\rfloor +1
\]
points of the sequence $(t_{m_i})_{i=1}^{N_2}$.
 Let $(t_{r_i})_{i=1}^{N_3}$ be a subsequence of length $N_3$ of such points. Furthermore, define
\[
N_4:=\Big\lfloor \frac{N_3}{k}\Big\rfloor.
\]
Since $(t_{r_i})_{i=1}^{N_3}$ is decreasing, we have a quantity of $N_4$ disjoint intervals  
\[
I_\mu:=(t_{r_{\mu\cdot k}},t_{r_{\mu\cdot k}}^{+k})\subseteq [s_\ell,s_{\ell+1}],\qquad 1\leq \mu\leq N_4.
\]
Consequently, there exists (at least) one index $\mu$ such that
\[
|I_\mu|\leq \frac{|[s_\ell,s_{\ell+1}]|}{N_4}.
\]
We next observe that the definition of $J_{m_1}$ yields
\begin{equation*}
|J_{m_1}|\geq |[s_\ell,s_{\ell+1}]|.
\end{equation*}
We thus get
\begin{equation}\label{eq:J02}
\begin{aligned}
|J_{r_{\mu\cdot k}}^{(0)}|&\geq |J_{r_{\mu\cdot k}}|\geq (1-\beta)|[x,y]|\geq (1-\beta)|J_{m_1}|\\
&\geq (1-\beta)|[s_\ell,s_{\ell+1}]|\geq (1-\beta)N_4|I_\mu|.
\end{aligned}
\end{equation}
On the other hand, the construction of $J_{r_{\mu\cdot k}}^{(0)}$ implies in particular
\begin{equation}\label{eq:J01}
|J_{r_{\mu\cdot k}}^{(0)}|\leq 2(t_{r_{\mu\cdot k}}^{+k}-t_{r_{\mu \cdot k}}) =  2|I_\mu|.
\end{equation}
The inequalities \eqref{eq:J02} and \eqref{eq:J01} imply $N_4\leq 2/(1-\beta)\leq 4$. Since the definition of $N_4$ involves only $k$, this proves the assertion of the lemma.
\end{proof}

\section{Technical estimates}\label{sec:techn}
\begin{lem}\label{lem:techn1}
Let $f=\sum_{n=-k+2}^\infty a_n f_n$ and $V$ be an open subinterval of $[0,1]$.
Then,
\begin{align}
\int_{V^c}\sum_{j\in\Gamma}|a_j f_j(t)|\dif t&\lesssim\int_V\Big(\sum_{j\in\Gamma}|a_j f_j(t)|^2\Big)^{1/2}\dif t, \label{eq:lemtechn1:1} 
\end{align}
where
\[
\Gamma:=\{j : J_j\subset  V\text{ and }-k+2\leq j<\infty\}.
\]
\end{lem}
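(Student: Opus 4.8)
The plan is to reduce the left-hand side to a sum over $j\in\Gamma$ of one-sided $L^1$-norms $\|a_jf_j\|_{L^1(V^c)}$, estimate each such norm using the decay estimates \eqref{eq:phiplinks} and \eqref{eq:phiprechts} of Lemma \ref{lem:lporthspline} (with $p=1$), and then compare the resulting sum against the right-hand side by means of the combinatorial Lemma \ref{lem:jinterval}. Fix $j\in\Gamma$, so $J_j\subset V$, and write $V=(a,b)$. Splitting $V^c\cap[0,1]$ into $(0,a)$ and $(b,1)$ and applying \eqref{eq:phiplinks}, \eqref{eq:phiprechts} with $p=1$ (after scaling back by $\|g\|_2\sim|J_j|^{-1/2}$, which is exactly how $\varphi=f_j$ relates to $g$), one gets
\[
\int_{V^c}|a_jf_j(t)|\dif t\lesssim |a_j|\,|J_j|^{1/2}\Big(\gamma^{d_j(a)}+\gamma^{d_j(b)}\Big),
\]
since for $x\notin V$ one has $\dist(x,J_j)\geq 0$ and $|J_j|+\dist(x,J_j)\geq|J_j|$, so the denominators in \eqref{eq:phiplinks}–\eqref{eq:phiprechts} are $\gtrsim|J_j|$ when $p=1$. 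The key gain is the factor $\gamma^{d_j(a)}+\gamma^{d_j(b)}$, where $d_j(a),d_j(b)$ count grid points of $\mathcal T_j$ separating the endpoints of $V$ from $J_j$.

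Next I would bound $\|a_jf_j\|_{L^1(V)}$ from below: by Lemma \ref{lem:orthsplineJinterval} with $p=1$, $\|f_j\|_{L^1(J_j)}\sim|J_j|^{1/2}$, hence $|a_j|\,|J_j|^{1/2}\lesssim\int_{J_j}|a_jf_j|\leq\int_V|a_jf_j|$; but to obtain the square-function on the right of \eqref{eq:lemtechn1:1} I instead want the pointwise comparison on $V$. Summing the displayed one-sided estimate over $j\in\Gamma$ and using Cauchy–Schwarz, it suffices to show
\[
\sum_{j\in\Gamma}\Big(\gamma^{d_j(a)}+\gamma^{d_j(b)}\Big)\charfun_{J_j}(t)\lesssim \Big(\sum_{j\in\Gamma}\charfun_{J_j}(t)\Big)^{1/2}\cdot(\text{something controllable}),
\]
or more directly, to group the indices $j\in\Gamma$ according to the \emph{level} of $J_j$ inside $V$: for each integer $m\geq 0$ let $\Gamma_m:=\{j\in\Gamma: \min(d_j(a),d_j(b))= m\}$ roughly speaking, i.e.\ $J_j$ is $m$ grid points away from $\partial V$. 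For $j\in\Gamma_m$ the weight is $\lesssim\gamma^m$, and the characteristic intervals $J_j$ with $j\in\Gamma_m$, once we further split by comparable length, occupy essentially disjoint positions of bounded overlap inside $V$ — this is precisely what Lemma \ref{lem:jinterval} controls, giving $\card(\Gamma_m\cap\{\text{fixed length scale}\})\leq F_k$ for each dyadic length scale, while the number of relevant length scales at level $m$ is $O(m+1)$. Summing $\sum_m (m+1)F_k\gamma^m<\infty$ yields a bounded-overlap-type estimate for the family $\{J_j:j\in\Gamma\}$ weighted by $\gamma^{d_j(\cdot)}$, which after Cauchy–Schwarz against $\charfun_V$ produces the square function on the right of \eqref{eq:lemtechn1:1}.

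The main obstacle I anticipate is the bookkeeping in the last step: translating the decay factor $\gamma^{d_j(a)}+\gamma^{d_j(b)}$ and the geometry of the $J_j$'s into a clean bounded-overlap statement, making sure that at each "distance level" $m$ the relevant characteristic intervals genuinely fall under the hypotheses of Lemma \ref{lem:jinterval} (i.e.\ are contained in a common subinterval and occupy a definite fraction of its length). One has to be careful that $d_j$ is measured with respect to the partition $\mathcal T_j$, which varies with $j$; the point is that if $J_j\subset V$ and $d_j$ at the endpoint is $m$, then $J_j$ lies inside a subinterval of $V$ bounded by grid points of $\mathcal T_j$ that are close to $\partial V$, and $|J_j|$ is comparable to the length of that subinterval up to a factor depending only on how many further subdivisions occurred — exactly the situation to which Lemma \ref{lem:jinterval} applies with a suitable $\beta=\beta(m,k)$. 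Once this reduction is set up correctly, the geometric summation closes the estimate and the constant depends only on $k$.
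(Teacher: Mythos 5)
Your overall strategy matches the paper's: estimate $\int_{V^c}|a_j f_j|$ by the one-sided decay estimate of Lemma~\ref{lem:lporthspline} at $p=1$, convert the factor $|J_j|^{1/2}$ back to $\int_{J_j}|a_jf_j|$ via Lemma~\ref{lem:orthsplineJinterval}, compare pointwise with the square function, and close with a bounded-overlap argument from Lemma~\ref{lem:jinterval} plus geometric summation in the decay exponent. However, the assembly of the last step has a genuine gap. The bounded-overlap statement you need is not for the family $\{J_j\}_{j\in\Gamma}$ itself but for a shrunk family $\{J_j^\beta\}$, where $J_j^\beta$ is the subinterval of $J_j$ sharing its left endpoint and having length $\beta|J_j|$ for a fixed $\beta<1$; this replacement costs only a constant by Proposition~\ref{prop:poly}, since $f_j$ is a polynomial on $J_j$. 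The reason the shrink is indispensable: fix $y=\sup V$ and the level sets $\Gamma_s:=\{j\in\Gamma:d_j(y)=s\}$. For $j_1,j_2\in\Gamma_s$ one has either $J_{j_1}\cap J_{j_2}=\emptyset$ or $\sup J_{j_1}=\sup J_{j_2}$, and in the latter (nested) case arbitrarily many $J_j$'s can share the common endpoint and all contain a point $t$ close to it, so $\sum_{j\in\Gamma_s}\charfun_{J_j}(t)$ is \emph{not} bounded. After the shrink, the $J_j^\beta$'s sit near the respective left endpoints and Lemma~\ref{lem:jinterval} gives that any point lies in at most $F_k$ of them, yielding $\sum_{j\in\Gamma_s}\int_{J_j^\beta}(\sum_\ell|a_\ell f_\ell|^2)^{1/2}\lesssim\int_V(\sum_\ell|a_\ell f_\ell|^2)^{1/2}$, after which $\sum_s\gamma^s$ converges. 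The correct target is a plain pointwise boundedness $\sum_j\gamma^{d_j}\charfun_{J_j^\beta}(t)\lesssim 1$, not the Cauchy--Schwarz shape you wrote down.

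Your proposed organization by $m=\min(d_j(a),d_j(b))$ and then by dyadic length scale, with the claim of ``$O(m+1)$ relevant scales per level,'' is both unnecessary and unjustified. Since the mesh ratio is unconstrained, there is no a priori relation between the number of grid points separating $J_j$ from $\partial V$ and the range of $|J_j|$ within that level set; the lengths can spread over arbitrarily many dyadic scales. The ``suitable $\beta=\beta(m,k)$'' you invoke also does not help: Lemma~\ref{lem:jinterval} requires $\beta\leq 1/2$, so letting $\beta$ degenerate with $m$ is not an option. The paper's route avoids all of this by treating the two sides of $V$ separately (so the one-sided $d_j$ enjoys the clean ``disjoint or common endpoint'' dichotomy) and using a single fixed shrink $\beta=1/4$ to obtain multiplicity $\leq F_k$ at every level $s$ with no further decomposition.
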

\begin{proof}
First, assume that $|V|=1$. Then \eqref{eq:lemtechn1:1} holds trivially. In the following, we assume that $|V|<1$. We define $x:=\inf V$, $y:=\sup V$ and fix an index $n\in \Gamma$. Observe that in this case, the definition of $\Gamma$ implies $n\geq 2$, since $J_j=[0,1]$ for $-k+2\leq j\leq 1$.
We only estimate the integral in \eqref{eq:lemtechn1:1} over the interval $[y,1]$. The integral over $[0,x]$ is estimated similarly. Lemma \ref{lem:lporthspline} implies
\[
\int_y^1|f_n(t)|\dif t\lesssim \gamma^{d_n(y)}|J_n|^{1/2}.
\]
Applying Lemma \ref{lem:orthsplineJinterval} yields
\begin{equation}\label{eq:techn1:1}
\int_y^1 |f_n(t)|\dif t\lesssim \gamma^{d_n(y)}\int_{J_n}|f_n(t)|\dif t.
\end{equation}
Now choose $\beta=1/4$ and let $J_n^\beta$ be the unique closed interval that satisfies
\[
|J_n^\beta|=\beta|J_n|\quad\text{and}\quad \inf J_n^\beta=\inf J_n.
\]
Since $f_n$ is a polynomial of order $k$ on the interval $J_n$, we apply Proposition \ref{prop:poly} to \eqref{eq:techn1:1} and estimate further
\begin{equation}\label{eq:techn1:2}
\int_y^1 |a_n f_n(t)|\dif  t \lesssim \gamma^{d_n(y)} \int_{J_n^\beta} |a_n f_n(t)|\dif t\leq \gamma^{d_n(y)} \int_{J_n^\beta} \Big(\sum_{j\in\Gamma}|a_jf_j(t)|^2\Big)^{1/2}\dif t
\end{equation}
Define $\Gamma_s:=\{j\in\Gamma:d_j(y)=s\}$ for $s\geq 0$. For fixed  $s\geq 0$ and $j_1,\,j_2\in\Gamma_s$, we have either
\[
J_{j_1}\cap J_{j_2}=\emptyset\quad\text{or}\quad \sup J_{j_1}=\sup J_{j_2}.
\]
So, Lemma \ref{lem:jinterval} implies that there exists a constant $F_{k}$ only depending on $k$, such that each point $t\in V$ belongs to at most $F_{k}$ intervals $J_{j}^\beta$, $j\in\Gamma_s$. Thus, summing over $j\in \Gamma_s$, we get from \eqref{eq:techn1:2}
\begin{equation*}
\begin{aligned}
\sum_{j\in\Gamma_s}\int_y^1 |a_jf_j(t)|\dif t &\lesssim \sum_{j\in\Gamma_s}\gamma^s \int_{J_{j}^\beta} \Big(\sum_{\ell\in\Gamma}|a_\ell f_\ell(t)|^2\Big)^{1/2} \dif t \\
&\lesssim \gamma^s\int_V\Big(\sum_{\ell \in\Gamma}|a_\ell f_\ell(t)|^2\Big)^{1/2} \dif t.
\end{aligned}
\end{equation*}
Finally, we sum over $s\geq 0$ to obtain inequality \eqref{eq:lemtechn1:1}.
\end{proof}
Let $g$ be a real-valued function defined on the closed unit interval. In the following, we denote by  $[g>\lambda]$ the  set $\{x\in[0,1]: g(x)>\lambda\}$ for any number $\lambda>0$.
\begin{lem}\label{lem:triv}
Let $f=\sum_{n=-k+2}^\infty a_n f_n$ with only finitely many nonzero coefficients $a_n$, $\lambda>0,\ r<1$ and 
\[
E_\lambda=[Sf>\lambda],\quad B_{\lambda,r}=[\cM\charfun_{E_\lambda}>r].
\]
Then we have
\[
E_\lambda\subset B_{\lambda,r}.
\]
\end{lem}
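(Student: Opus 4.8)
The statement $E_\lambda \subset B_{\lambda,r}$ should be essentially immediate from the Lebesgue density theorem together with the fact that $E_\lambda$ is open. The plan is to first observe that $Sf$ is lower semicontinuous: since $f$ has only finitely many nonzero coefficients, $Sf = \big(\sum_{n} |a_n f_n|^2\big)^{1/2}$ is a finite sum of continuous functions (each $f_n$ is piecewise polynomial, hence continuous on $[0,1]$ away from the finitely many knots, and in fact $Sf$ is continuous except possibly at finitely many points where one should check it is at worst lower semicontinuous), so $E_\lambda = [Sf > \lambda]$ is (relatively) open in $[0,1]$.

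Next I would fix $x \in E_\lambda$. Since $E_\lambda$ is open, there is an interval $I \ni x$ with $I \subset E_\lambda$, hence
\[
\cM \charfun_{E_\lambda}(x) \geq |I|^{-1}\int_I \charfun_{E_\lambda}(t)\dif t = 1 > r,
\]
so $x \in B_{\lambda,r}$. This gives $E_\lambda \subset B_{\lambda,r}$, using $r < 1$.

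**Main obstacle.** The only genuinely delicate point is justifying that $E_\lambda$ is open, i.e. that a small neighbourhood of each $x \in E_\lambda$ stays inside $E_\lambda$. If one is willing to invoke continuity of each $f_n$ on $[0,1]$ (the orthonormal spline functions of order $k$ are continuous when $k \geq 2$; for $k=1$ they are only piecewise constant and one must instead use that $Sf$ is a finite sum of nonnegative functions each of which is constant on grid intervals, so that $[Sf>\lambda]$ is a finite union of half-open grid intervals, and a density-point argument still applies to produce an interval $I \ni x$, $I \subset E_\lambda$, once one picks $x$ to be an interior point — or, more robustly, one replaces the inclusion argument by: a.e. $x \in E_\lambda$ is a density point of $E_\lambda$, hence $\cM\charfun_{E_\lambda}(x)=1>r$, and then notes both sides are essentially determined up to null sets). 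In all cases the conclusion $E_\lambda \subset B_{\lambda,r}$ holds, at worst after discarding a null set, which suffices for the later application. I expect the paper simply uses lower semicontinuity of $Sf$ and openness of $E_\lambda$ directly.
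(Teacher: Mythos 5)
Your proposal is correct and matches the paper's argument: both observe that $Sf$ is continuous away from finitely many grid points (and, as the paper notes, at least right-continuous at those points), deduce that each $t\in E_\lambda$ lies in an interval $I\subset E_\lambda$, and conclude $\cM\charfun_{E_\lambda}(t)\geq |I|/|I|=1>r$. Your worry about $k=1$ requiring a density-point/null-set argument is unnecessary: right-continuity of $Sf$ at the grid points already supplies a (possibly half-open) interval $I\ni t$ with $I\subset E_\lambda$, so the inclusion holds pointwise exactly as in the paper.
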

\begin{proof}
Let $t\in E_\lambda$ be fixed. The square function $Sf=\big(\sum_{n=-k+2}^\infty |a_nf_n|^2\big)^{1/2}$ is continuous except possibly at finitely many grid points, where $Sf$ is at least continuous from the right.  As a consequence, for $t\in E_\lambda$, there exists an interval $I\subset E_\lambda$ such that $t\in I$. This implies the following estimate:
\begin{align*}
(\cM\charfun_{E_\lambda})(t)&=\sup_{t\ni U}|U|^{-1}\int_U \charfun_{E_\lambda}(x)\dif x \\
&=\sup_{t\ni U}\frac{|E_\lambda\cap U|}{|U|}\geq \frac{|E_\lambda\cap I|}{|I|}=\frac{|I|}{|I|}=1>r.
\end{align*}
The above inequality shows $t\in B_{\lambda,r}$, proving the lemma.
\end{proof}
\begin{lem}\label{lem:Sf}
Let $f=\sum_{n=-k+2}^\infty a_n f_n$ with only finitely many nonzero coefficients $a_n$, $\lambda>0$ and $r<1$. Then we define
\[
E_\lambda:=[Sf>\lambda],\qquad B_{\lambda,r}:= [\mathcal M\charfun_{E_\lambda}>r].
\]
If 
\[
\Lambda=\{n:J_n\not\subset  B_{\lambda,r}\text{ and }-k+2\leq n<\infty\}\qquad\text{and}\qquad g=\sum_{n\in\Lambda} a_nf_n,
\]
we have
\begin{equation}\label{eq:Sfid}
\int_{E_\lambda}Sg(t)^2\dif t\lesssim_r \int_{E_\lambda^c}Sg(t)^2\dif t.
\end{equation}

\end{lem}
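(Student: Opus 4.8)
The plan is to prove \eqref{eq:Sfid} by a term-by-term argument. Since $f$ has only finitely many nonzero coefficients, $Sg(t)^2=\sum_{n\in\Lambda}a_n^2f_n(t)^2$ is a finite sum of nonnegative terms, so it suffices to show
\[
\int_{E_\lambda}f_n(t)^2\dif t\lesssim_r\int_{E_\lambda^c}f_n(t)^2\dif t\qquad\text{for every }n\in\Lambda,
\]
and then multiply by $a_n^2\ge0$ and sum over $n\in\Lambda$. Because $f_n$ is $L^2$-normalized, $\int_{E_\lambda}f_n^2=1-\int_{E_\lambda^c}f_n^2\le1$, so this estimate is equivalent to the lower bound $\int_{E_\lambda^c}f_n(t)^2\dif t\gtrsim_r1$, which is what I would establish.

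Fix $n\in\Lambda$. I would use two facts. First, $f_n^2$ is a polynomial of order $2k-1$ on the characteristic interval $J_n$: for $n\ge2$ this is because $J_n$ is a single grid interval of $\mathcal T_n$ by Definition \ref{def:characteristic}, and for $-k+2\le n\le1$ because $J_n=[0,1]$, where $f_n$ is a polynomial. Second, membership in $\Lambda$ forces a density bound for $E_\lambda^c$ on $J_n$: since $n\in\Lambda$ means $J_n\not\subset B_{\lambda,r}$, there is a point $x_0\in J_n\setminus B_{\lambda,r}$, i.e. $\mathcal M\charfun_{E_\lambda}(x_0)\le r$; as $J_n$ is an interval containing $x_0$, the definition of the Hardy--Littlewood maximal function gives $|E_\lambda\cap J_n|/|J_n|\le\mathcal M\charfun_{E_\lambda}(x_0)\le r$, whence
\[
|E_\lambda^c\cap J_n|\ge(1-r)|J_n|.
\]

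I would then apply Proposition \ref{prop:poly} --- which remains valid, with a constant depending only on $k$, for polynomials of order $2k-1$ --- to the polynomial $f_n^2$ on $J_n$ with the subset $A=E_\lambda^c\cap J_n$, whose relative measure in $J_n$ is at least $\rho=1-r$, obtaining
\[
\int_{J_n}f_n(t)^2\dif t\lesssim_r\int_{E_\lambda^c\cap J_n}f_n(t)^2\dif t\le\int_{E_\lambda^c}f_n(t)^2\dif t.
\]
On the other hand, Lemma \ref{lem:orthsplineJinterval} with $p=2$ (and, in the range $-k+2\le n\le1$, simply the normalization $\|f_n\|_2=1$) gives $\int_{J_n}f_n^2=\|f_n\|_{L^2(J_n)}^2\sim1$. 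Combining these with $\int_{E_\lambda}f_n^2\le\|f_n\|_2^2=1$ yields $\int_{E_\lambda}f_n^2\le1\lesssim\int_{J_n}f_n^2\lesssim_r\int_{E_\lambda^c}f_n^2$, which is the required term-by-term estimate; summing over $n\in\Lambda$ proves \eqref{eq:Sfid}. I do not expect a genuine obstacle: the argument is in essence a density estimate for a polynomial on $J_n$, and the only points needing care are the bookkeeping of polynomial orders (so that Proposition \ref{prop:poly} may be applied to $f_n^2$, which has order $2k-1$) and correctly unwinding $B_{\lambda,r}=[\mathcal M\charfun_{E_\lambda}>r]$ to produce the bound on $|E_\lambda^c\cap J_n|$. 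Alternatively, the higher-order version of Proposition \ref{prop:poly} can be avoided by combining its order-$k$ form with the Cauchy--Schwarz inequality and the $L^1$ case of Lemma \ref{lem:orthsplineJinterval}.
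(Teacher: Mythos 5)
Your proof is correct, and it is genuinely slicker than the one in the paper, though the underlying ingredients are the same (density of $E_\lambda^c$ in $J_n$ via the maximal function, Proposition~\ref{prop:poly} applied to $f_n^2$, and Lemma~\ref{lem:orthsplineJinterval} for $\|f_n\|_{L^2(J_n)}\sim 1$). The key simplification is that you use the interval $J_n$ itself as the test interval in the definition of $\mathcal M\charfun_{E_\lambda}(x_0)$ at a point $x_0\in J_n\setminus B_{\lambda,r}$, which immediately yields $|E_\lambda\cap J_n|\le r|J_n|$ and hence the term-by-term bound
\[
\int_{E_\lambda}f_n^2\le\|f_n\|_2^2=1\lesssim\int_{J_n}f_n^2\lesssim_r\int_{E_\lambda^c\cap J_n}f_n^2.
\]
The paper instead first splits $\int_{J_n}|a_nf_n|^2$ into the parts over $J_n\cap E_\lambda^c$ and $J_n\cap E_\lambda$, invokes the auxiliary Lemma~\ref{lem:triv} to get $E_\lambda\subset B_{\lambda,r}$, decomposes $B_{\lambda,r}$ into its connected components $(V_j)$, observes that for $n\in\Lambda$ with $|J_n\cap V_j|>0$ the interval $J_n$ must contain an endpoint $x$ of $V_j$ at which $\mathcal M\charfun_{E_\lambda}(x)\le r$, and then applies Proposition~\ref{prop:poly} on each piece $J_n\cap V_j$. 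Your argument dispenses entirely with Lemma~\ref{lem:triv} and the Whitney-type decomposition, because there is no need to localize to $V_j$: the definition of $\Lambda$ already supplies a single point $x_0\in J_n\setminus B_{\lambda,r}$, and testing $\mathcal M$ against $J_n$ itself suffices. The only bookkeeping point to keep in mind (which you correctly flag) is that Proposition~\ref{prop:poly} is stated for order $k$ but applies verbatim to $f_n^2$, which has order $2k-1$, with constants depending only on $k$; the paper uses exactly this same extension.
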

\begin{proof}
First, we observe that in the case $B_{\lambda,r}=[0,1]$, the index set $\Lambda$ is empty and thus, \eqref{eq:Sfid} holds trivially. So let us assume $B_{\lambda,r}\neq [0,1]$.
Then, we  start the proof of \eqref{eq:Sfid} with an application of Lemma \ref{lem:orthsplineJinterval} (for $n\geq 2$) and the fact that $J_n=[0,1]$ for $n\leq 1$ to obtain
\[
\int_{E_\lambda} Sg(t)^2\dif t=\sum_{n\in\Lambda}\int_{E_\lambda}|a_n f_n(t)|^2\dif t\lesssim \sum_{n\in\Lambda} \int_{J_n}|a_nf_n(t)|^2\dif t.
\]
We split the latter expression into the parts
\[
I_1:=\sum_{n\in\Lambda} \int_{J_n\cap E_\lambda^c}|a_nf_n(t)|^2\dif t,\quad I_2:=\sum_{n\in\Lambda} \int_{J_n\cap E_\lambda}|a_nf_n(t)|^2\dif t.
\]
For $I_1$, we clearly have
\begin{equation}\label{eq:lemSf:1}
I_1\leq \sum_{n\in\Lambda} \int_{E_\lambda^c}|a_nf_n(t)|^2\dif t=\int_{E_\lambda^c} Sg(t)^2\dif t.
\end{equation}
It remains to estimate $I_2$. First we observe that by Lemma \ref{lem:triv}, $E_\lambda\subset B_{\lambda,r}$. 
Since the set $B_{\lambda,r}=[\mathcal M\charfun_{E_\lambda}>r]$ is open in $[0,1]$, we decompose it into a countable collection of disjoint open subintervals $(V_j)_{j=1}^\infty$ of $[0,1]$. Utilizing this decomposition, we estimate
\begin{equation}\label{eq:lemSf:2}
I_2\leq \sum_{n\in\Lambda}\sum_{j:|J_n\cap V_j|>0} \int_{J_n\cap V_j}|a_nf_n(t)|^2\dif t.
\end{equation}
If the indices $n$ and $j$ are such that $n\in\Lambda$ and $|J_n\cap V_j|>0$, then, by definition of $\Lambda$, $J_n$ is an interval containing at least one endpoint $x\in\{\inf V_j,\sup V_j\}$ of $V_j$ for which
\[
\cM\charfun_{E_\lambda}(x)\leq r.
\]
This implies
\[
|E_\lambda\cap J_n\cap V_j|\leq r |J_n\cap V_j|\quad\text{or equivalently}\quad |E_\lambda^c\cap J_n\cap V_j|\geq (1-r)|J_n\cap V_j|.
\]
Using this inequality and that $|f_n|^2$ is a polynomial of order $2k-1$ on $J_n$ allows us to use Proposition \ref{prop:poly}  to conclude from \eqref{eq:lemSf:2}
\begin{equation*}
\begin{aligned}
I_2&\lesssim_r \sum_{n\in\Lambda}\sum_{j:|J_n\cap V_j|> 0} \int_{E_\lambda^c\cap J_n\cap V_j}|a_nf_n(t)|^2\dif t \\
&\leq \sum_{n\in\Lambda} \int_{E_\lambda^c\cap J_n\cap B_{\lambda,r}}|a_nf_n(t)|^2\dif t \\
&\leq \sum_{n\in\Lambda} \int_{E_\lambda^c}|a_nf_n(t)|^2\dif t=\int_{E_\lambda^c}Sg(t)^2\dif t,
\end{aligned}
\end{equation*}
The latter inequality combined with \eqref{eq:lemSf:1} completes the proof the lemma.
\end{proof}

\begin{lem}\label{lem:techn2}

Let $V$ be an open subinterval of $[0,1]$, $x:=\inf V$, $y:=\sup V$ and $f=\sum_{n=-k+2}^\infty a_n f_n \in L^p[0,1]$  for $1<p<2$ with $\supp f\subset V$. Let $R>1$ be an arbitrary number satisfying $R\gamma<1$ with the constant $\gamma$ from Theorem \ref{thm:maintool}. Then,
\begin{equation}\label{eq:lemtechn2}
\sum_{n=\polyfun(V)}^\infty R^{p d_n(V)}|a_n|^p \|f_n\|_{L^p(\widetilde{V}^c)}^p\lesssim_{p,R} \|f\|_p^p,
\end{equation}
where $\polyfun(V)=\min\{n:\cT_n\cap V\neq\emptyset\}$ and $\widetilde{V}=(\widetilde{x},\widetilde{y})$ with $\widetilde{x}=x-2|V|$ and $\widetilde{y}=y+2|V|$. 
\end{lem}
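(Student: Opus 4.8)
The statement we want to prove, \eqref{eq:lemtechn2}, is a weighted summation estimate for the $L^p(\widetilde V^c)$ norms of the coordinates $a_n f_n$ in an expansion of a function $f$ supported on $V$, with an exponentially growing weight $R^{p d_n(V)}$ that beats the exponential decay $\gamma^{d_n(\cdot)}$ coming from Lemma \ref{lem:lporthspline}. The plan is to split the sum over $n$ according to the value of $d_n(V)$, to estimate the tail contributions $\|f_n\|_{L^p(\widetilde V^c)}$ using the decay estimates \eqref{eq:phiplinks}--\eqref{eq:phiprechts} of Lemma \ref{lem:lporthspline}, to bound the coordinates $|a_n|$ themselves by a local average of $f$ (this is where $\supp f\subset V$ enters, via $a_n=\langle f,f_n\rangle$ and the concentration of $f_n$ near $J_n$), and finally to collect everything into a bound by $\mathcal M f$ or directly by $\|f\|_p$ using the Hardy--Littlewood maximal inequality.

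\textbf{Step 1: reduce to one side and to a single layer $d_n(V)=s$.} Since $\widetilde V$ extends $V$ by $2|V|$ on each side, $\widetilde V^c$ splits into $[0,\widetilde x)$ and $(\widetilde y,1]$; I would treat the right piece, the left being symmetric. For fixed $s\ge 0$ let $\Lambda_s=\{n\ge \polyfun(V): d_n(V)=s\}$. For $n\in\Lambda_s$, the characteristic interval $J_n$ lies within $s$ knots of $V$ (counting endpoints), so in particular $\dist(x,J_n)\lesssim$ something controlled by the geometry, and more importantly $J_n$ is "close to" $V$; the crucial combinatorial fact, exactly as in Lemma \ref{lem:techn1}, is that the intervals $J_n$, $n\in\Lambda_s$, have bounded overlap — for $n_1,n_2\in\Lambda_s$ either $J_{n_1}\cap J_{n_2}=\emptyset$ or they share an endpoint, so by Lemma \ref{lem:jinterval} each point lies in at most $F_k$ of a fixed $\beta$-fraction of these intervals.

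\textbf{Step 2: estimate $\|f_n\|_{L^p(\widetilde y,1)}$ and $|a_n|$.} For $n\in\Lambda_s$ apply \eqref{eq:phiprechts} with $x=\widetilde y$: since $\dist(\widetilde y,J_n)\gtrsim |V|\gtrsim |J_n|$ once $J_n\subset\overline V$ (and in general $|J_n|+\dist(\widetilde y,J_n)\gtrsim|V|$ for $n$ with $J_n$ near $V$), and since $d_n(\widetilde y)\ge d_n(\sup V)\ge 0$, we get $\|f_n\|_{L^p(\widetilde y,1)}\lesssim \gamma^{d_n(\widetilde y)}|J_n|^{1/2}|V|^{-(1-1/p)}$; cruder still, $\lesssim |J_n|^{1/2}|V|^{1/p-1}$. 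For the coefficient, $a_n=\langle f,f_n\rangle=\int_V f\,f_n$; using $\|f_n\|_{L^\infty}\lesssim |J_n|^{-1/2}$ on $J_n$ together with the decay of $f_n$ away from $J_n$ (Remark \ref{rem:linftyorthspline}), and Hölder, one obtains $|a_n|\lesssim |J_n|^{1/2-1/p}\,\bigl(|J_n|^{-1}\int_{C J_n}|f|\bigr)$ up to a sum of geometrically decaying tail terms, hence $|a_n|\lesssim |J_n|^{1/2}\,\inf_{t\in J_n}\mathcal M f(t)$ after absorbing the $|J_n|^{-1/p}$ into an $L^{1}$-average and using that $|J_n|\sim|V|$ in this regime.

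\textbf{Step 3: assemble.} Plugging Steps 1--2 into the left side of \eqref{eq:lemtechn2}, the $n$-sum over $\Lambda_s$ becomes, up to constants, $R^{ps}\gamma^{ps}$ (the $\gamma$ extracted from the $d_n(\widetilde y)\ge s - O(1)$ bound) times $\sum_{n\in\Lambda_s} |J_n|\,|V|^{p(1/p-1)}\bigl(\inf_{J_n}\mathcal M f\bigr)^p$, and the bounded-overlap property turns $\sum_{n\in\Lambda_s}|J_n|(\cdots)$ into $\lesssim F_k\int_{\widehat V}(\mathcal M f)^p$ over a bounded dilate $\widehat V$ of $V$; since $|V|^{p(1/p-1)}$ pairs with the measure factor to give the right homogeneity, one ends with $\lesssim (R\gamma)^{ps}\int_{\widehat V}(\mathcal M f)^p$. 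Summing the geometric series in $s$ (this needs $R\gamma<1$, which is the hypothesis) and applying the Hardy--Littlewood maximal theorem in $L^p$, $1<p<\infty$, gives $\lesssim_{p,R}\int (\mathcal M f)^p\lesssim_p\|f\|_p^p$, which is \eqref{eq:lemtechn2}.

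\textbf{Main obstacle.} The routine parts are the geometric summation and the maximal inequality; the delicate point is Step 2 — getting the coefficient bound $|a_n|\lesssim |J_n|^{1/2}\inf_{J_n}\mathcal M f$ with the correct powers of $|J_n|$ versus $|V|$, and making sure the "tail" contributions of $f_n$ away from $J_n$ (which decay like $\gamma^{d}$ but are integrated against $f$ supported possibly far inside $V$) really are summable and do not destroy the homogeneity. One must be careful that it is $d_n(V)$, not $d_n(x)$ for the particular $x=\widetilde y$, that appears in the weight, so the comparison $d_n(\widetilde y)\ge d_n(V)-c_k$ (a fixed loss since $\widetilde y$ is within a bounded number of knots of $\sup V$ — actually this needs the observation that $\widetilde y$ is separated from $V$ by the interval $(y,\widetilde y)$ of length $2|V|$, which may contain many knots, so in fact $d_n(\widetilde y)\ge d_n(V)$ directly) has to be justified cleanly. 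I expect that, exactly as the excerpt remarks about Sections \ref{sec:techn}--\ref{sec:main} following \cite{GevKam2004}, the bookkeeping is the only real difficulty and no new idea beyond Lemmata \ref{lem:jinterval}, \ref{lem:lporthspline} and Theorem \ref{thm:maxbound} is required.
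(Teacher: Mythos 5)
The approach is genuinely different from the paper's: where the paper bounds $|a_n|$ directly via H\"older (its inequality (5.4)) and then handles the combinatorics with a careful six-way case split of the indices, you instead try for a uniform coefficient bound $|a_n|\lesssim |J_n|^{1/2}\inf_{J_n}\mathcal Mf$ and a single bounded-overlap/maximal-function sum. That idea is appealing in principle (and the coefficient bound itself does hold, by partitioning $V$ into grid intervals of $\mathcal T_n$ and using Remark \ref{rem:linftyorthspline}), but as written the argument has genuine gaps that the paper's case analysis exists precisely to close.

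The main gap is that your Steps 2--3 implicitly assume $J_n$ sits in or near $V$. When $J_n\subset(\widetilde y,1)$ (or $J_n\subset[0,\widetilde x]$), two things break simultaneously: $\|f_n\|_{L^p(\widetilde y,1)}$ is \emph{not} small --- it is comparable to $\|f_n\|_p\sim|J_n|^{1/p-1/2}$ --- and the inequality $d_n(\widetilde y)\ge d_n(V)$ that you use to pull out the $(R\gamma)^{ps}$ factor actually reverses: if $J_n$ is to the right of $\widetilde y$ then $d_n(\widetilde y)<d_n(V)$. Here all the decay must come from $|a_n|$ alone, via the decay of $f_n$ on $V$, which is a structurally different estimate; this is the paper's Cases 1, 3 and 6. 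A second gap is the assumption ``$|J_n|\sim|V|$ in this regime'' and the concomitant step $|J_n|^{p-1}\le|V|^{p-1}$: for $J_n$ straddling $x$ or $\widetilde x$, $|J_n|$ can be much larger than $|V|$ and the normalization breaks. A third gap is the asserted bounded overlap of $\{J_n:n\in\Lambda_s\}$: within a single layer $d_n(V)=s$ the $J_n$ can nest arbitrarily deeply, and the paper has to arrange them into decreasing packets and sum $\sum|J_n|^{1-p}$ using that $p<2$ (its inequality (5.9)); a bare appeal to Lemma \ref{lem:jinterval} does not give a finite overlap constant across scales. Finally, the hardest part of the paper's proof, Case 5, where $J_n\subset[x,\widetilde y]$ near the boundary of $V$, is not touched at all: there the coefficient $a_n$ has to be further split into three pieces $a_{n,1},a_{n,2},a_{n,3}$ over $I_1,I_2,I_3$ and each piece summed with its own geometric bookkeeping, and your uniform maximal-function bound loses exactly the geometric information (distance of each sub-piece of $\supp f$ to $J_n$) that this decomposition exploits. (There are also minor exponent slips in your Step 2, e.g.\ $|J_n|^{1/2-1/p}$ has the wrong homogeneity, and Step 3 writes $|J_n|$ where $|J_n|^{p}$ should appear before the $|V|^{1-p}$.) In short: the idea of passing through $\mathcal Mf$ is a reasonable alternative line of attack, but one would still need to split into the same geometric regimes the paper does, at which point the two proofs would look essentially the same; as written the proposal does not establish \eqref{eq:lemtechn2}.
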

\begin{proof}
First observe that $\widetilde{V}^c=[0,\widetilde{x}]\cup [\widetilde{y},1]$. We estimate only the part corresponding to the interval $[0,\widetilde{x}]$ and assume that $\widetilde{x}>0$. The other part is treated analogously.

Let $m\geq 0$ and define
\begin{equation}\label{eq:techn2:0.5}
T_{m}:=\{n\in\bN:n\geq \polyfun(V),\ \card\{i\leq n:\widetilde{x}\leq t_i\leq x\}=m\},
\end{equation}
where $\card E$ is the cardinality of a set $E$. We remark that the index set $T_{m}$ is finite, since the sequence $(t_n)_{n=0}^\infty$ is dense in the unit interval $[0,1]$.

We now split the index set $T_{m}$ further into the following six subcollections.
\begin{align*}
T_{m}^{(1)}&=  \{n\in T_{m}:J_n\subset [\widetilde{x},x]\},\\
T_{m}^{(2)}&=  \{n\in T_{m}:\widetilde{x}\in J_n, |J_n\cap [\widetilde{x},x]|\geq |V|, J_n\not\subset [\widetilde{x},x]\},\\
T_{m}^{(3)}&=  \{n\in T_{m}: J_n\subset [0,\widetilde{x}] \text{ or } \\
&\qquad\big(\widetilde{x}\in J_n \text{ with }|J_n\cap [\widetilde{x},x]|\leq |V| \text{ and }J_n\not\subset [\widetilde{x},x]\big)\},\\
T_{m}^{(4)}&=  \{n\in T_{m}: x\in J_n, |J_n\cap [\widetilde{x},x]|\geq |V|, J_n\not\subset[\widetilde{x},x]\},\\
T_{m}^{(5)}&=  \{n\in T_{m}:J_n\subset [x,\widetilde{y}]\text{ or } \\
&\qquad\big(x\in J_n\text{ with }|J_n\cap[\widetilde{x},x]|\leq |V|\text{ and }J_n\not\subset[\widetilde{x},x]\big)\},\\
T_{m}^{(6)}&=  \{n\in T_{m}:J_n\subset [\widetilde{y},1]\text{ or }\big(\widetilde{y}\in J_n\text{ with }J_n\not\subset [x,\widetilde{y}]\big)\}.
\end{align*}
We treat each of these index sets separately. Before we begin examining sums like in \eqref{eq:lemtechn2} where $n$ is restricted to one of the above index sets, we note that for all $n$ we have by definition of $a_n=\langle f,f_n\rangle$ and the support assumption on $f$
\begin{equation}\label{eq:techn2:1}
|a_n|^p\leq \int_V |f(t)|^p\dif t\cdot \Big(\int_V |f_n(t)|^{p'}\dif t\Big)^{p-1},
\end{equation}
where $p'=p/(p-1)$ denotes the conjugate Hölder exponent to $p$.

\textsc{Case 1: } $n\in T_{m}^{(1)}=\{n\in T_{m}:J_n\subset [\widetilde{x},x]\}$.\\ Let $\widetilde{T}_{m}^{(1)}:=T_{m}^{(1)}\setminus \{\min T_{m}^{(1)}\}$. By definition, the interval $J_n$ is at most $k-1$ grid points in $\cT_n$ away from $t_n$. Since the number $m$ of grid points between $\widetilde{x}$ and $x$ is constant for all $n\in T_{m}$, there are only $2(k-1)$ possibilities for $J_n$ with $n\in \widetilde{T}_{m}^{(1)}$. By Lemma \ref{lem:jinterval}, applied with $\beta=0$, every $J_n$ is characteristic interval of at most $F_{k}$ points $t_m$ and thus,
\begin{equation}\label{eq:techn2:2}
\card T_{m}^{(1)}\leq 2(k-1)F_k+1.
\end{equation}
By Lemma \ref{lem:lporthspline} and Lemma \ref{lem:orthsplineJinterval} respectively, 
\begin{equation}\label{eq:techn2:3}
\int_0^{\widetilde{x}}|f_n(t)|^p\dif t\lesssim \gamma^{p d_n(\widetilde{x})}\|f_n\|_p^p\quad\text{ and } \int_V |f_n(t)|^{p'}\dif t\lesssim \gamma^{p' d_n(V)}\|f_n\|_{p'}^{p'}
\end{equation}
for $n\in T_{m}^{(1)}$.
Furthermore, $d_n(\widetilde{x})+d_n(V)=m$ by definition of $d_n$, the location of $J_n$ and the fact that $n\in T_{m}^{(1)}$. So, using \eqref{eq:techn2:1}, \eqref{eq:techn2:3} and Lemma \ref{lem:orthsplineJinterval} respectively,
\begin{align*}
\sum_{n\in T_{m}^{(1)}}& R^{p d_n(V)} |a_n|^p \int_0^{\widetilde{x}} |f_n(t)|^p\dif t   \\
&\leq \sum_{n\in T_{m}^{(1)}} R^{p d_n(V)} \int_V |f(t)|^p\dif t\cdot\Big(\int_V|f_n(t)|^{p'}\dif t\Big)^{p-1} \int_0^{\widetilde{x}}|f_n(t)|^p\dif t\\
&\lesssim \sum_{n\in T_{m}^{(1)}} R^{pd_n(V)} \gamma^{p(d_n(\widetilde{x})+d_n(V))} \|f_n\|_p^p \|f_n\|_{p'}^p \int_V|f(t)|^p\dif t \\
&\lesssim \sum_{n\in T_{m}^{(1)}} (R\gamma)^{pm} \int_{V} |f(t)|^p\dif t.
\end{align*}
Finally, we employ \eqref{eq:techn2:2} to obtain
\begin{equation}\label{eq:techn2:4}
\sum_{n\in T_{m}^{(1)}} R^{p d_n(V)} |a_n|^p \int_0^{\widetilde{x}} |f_n(t)|^p\dif t \lesssim (R\gamma)^{pm}\int_V |f(t)|^p\dif t,
\end{equation}
which concludes the proof of Case 1.

\textsc{Case 2: } $n\in T_{m}^{(2)}=\{n\in T_{m}:\widetilde{x}\in J_n, |J_n\cap [\widetilde{x},x]|\geq |V|, J_n\not\subset [\widetilde{x},x]\}$. \\
In this case we have $d_n(V)=m$ and thus Lemma \ref{lem:lporthspline} implies
\begin{equation*}
\int_V |f_n(t)|^{p'}\dif t\leq \|f_n\|_{L^\infty(V)}^{p'}|V| \lesssim \gamma^{p'm}|J_n|^{-p'/2}|V|.
\end{equation*}
So we use \eqref{eq:techn2:1} and this estimate and  to obtain
\begin{equation*}
\begin{aligned}
|a_n|^p\|f_n\|_p^p &\leq \int_V |f(t)|^p\dif t\cdot \Big(\int_V |f_n(t)|^{p'}\dif t\Big)^{p-1} \|f_n\|_p^p \\
&\lesssim \int_V |f(t)|^p\dif t\cdot \gamma^{pm}|J_n|^{-p/2}|V|^{p-1} \|f_n\|_p^p.
\end{aligned}
\end{equation*}
We continue and employ Lemma \ref{lem:orthsplineJinterval} to get further
\begin{equation}\label{eq:techn2:5}
\begin{aligned}
|a_n|^p\|f_n\|_p^p &\lesssim \gamma^{pm} |J_n|^{-p/2+1-p/2}|V|^{p-1} \int_V |f(t)|^p\dif t \\
&\leq \gamma^{pm}|J_n|^{1-p}|V|^{p-1}\|f\|_p^p.
\end{aligned}
\end{equation}
If $n_0<n_1<\dots <n_s$ is an enumeration of all elements in $T_{m}^{(2)}$, we have by definition of $T_{m}^{(2)}$
\begin{equation*}
J_{n_0}\supset J_{n_1}\supset \dots \supset J_{n_s}\qquad \text{and} \qquad|J_{n_s}|\geq |V|.
\end{equation*}
Thus, Lemma \ref{lem:jinterval} and the fact that $1<p<2$ imply 
\begin{equation}\label{eq:techn2:6}
\sum_{n\in T_{m}^{(2)}} |J_n|^{1-p} \sim_p |J_{n_s}|^{1-p}\leq |V|^{1-p}
\end{equation}
We finally use \eqref{eq:techn2:5} and \eqref{eq:techn2:6} to conclude
\begin{equation}\label{eq:techn2:7}
\begin{aligned}
\sum_{n\in T_{m}^{(2)}}  R^{p d_n(V)} |a_n|^p \|f_n\|_p^p&\lesssim (R\gamma)^{pm}|V|^{p-1}\|f\|_p^p \sum_{n\in T_{m}^{(2)}} |J_n|^{1-p} \\
&\lesssim_p (R\gamma)^{pm} \|f\|_p^p.
\end{aligned}
\end{equation}

\textsc{Case 3: }$n\in T_{m}^{(3)}=\{n\in T_{m}: J_n\subset [0,\widetilde{x}] \text{ or } \big(\widetilde{x}\in J_n \text{ with }|J_n\cap [\widetilde{x},x]|\leq |V| \text{ and }J_n\not\subset [\widetilde{x},x]\big)\}$. 

For $n\in T_{m}^{(3)}$, we denote by the finite sequence $(x_i)_{i=1}^m$ the points in $\cT_n\cap [\widetilde{x},x]$ in increasing order and counting multiplicities. 
If there exists an index $n\in T_{m}^{(3)}$ such that $x_1$ is the right endpoint of $J_n$ and $\widetilde{x}\in J_n$, we define $x^*:=x_1$. If not, we set $x^*:=\widetilde{x}$. By definition of $T_{m}^{(3)}$ and $x^*$, we have
\begin{equation}\label{eq:techn2:8}
|V|\leq |[x^*,x]|\leq 2|V|.
\end{equation}
Furthermore, for all $n\in T_{m}^{(3)}$,
\begin{equation*}
J_n\subset [0,x^*]\quad \text{and}\quad |[x^*,x]\cap \cT_n|=m.
\end{equation*}
Moreover,
\begin{equation}\label{eq:techn2:10}
m+d_n(x^*)-k\leq d_n(V)\leq m+d_n(x^*),
\end{equation}
where the exact value of $d_n(V)$ depends on the multiplicity of $x^*$ in $\cT_n$ (which cannot exceed $k$). By Lemma \ref{lem:lporthspline} and \eqref{eq:techn2:10} we have
\begin{equation*}
\sup_{t\in V} |f_n(t)|\lesssim \gamma^{m+d_n(x^*)}\frac{|J_n|^{1/2}}{|J_n|+\dist(x,J_n)}.
\end{equation*}
We use this inequality to get
\begin{equation}\label{eq:techn2:12}
\int_V |f_n(t)|^{p'}\dif t\lesssim |V|\cdot \gamma^{p'(m+d_n(x^*))}\frac{|J_n|^{p'/2}}{(|J_n|+\dist(x,J_n))^{p'}}.
\end{equation}
Employing \eqref{eq:techn2:1}, \eqref{eq:techn2:12} and Lemma \ref{lem:orthsplineJinterval} respectively,
\begin{equation*}
\begin{aligned}
 R^{p d_n(V)}&|a_n|^p \|f_n\|_p^p  \\
&\leq R^{pd_n(V)}\int_V |f(t)|^p\dif t\cdot\Big(\int_V |f_n(t)|^{p'}\dif t\Big)^{p-1}\|f_n\|_p^p \\
&\lesssim R^{pd_n(V)}\|f\|_p^p |V|^{p-1}\gamma^{p(m+d_n(x^*))}\frac{|J_n|^{p/2}}{(|J_n|+\dist(x,J_n))^{p}} \|f_n\|_p^p \\
&\lesssim R^{pd_n(V)}\|f\|_p^p |V|^{p-1}\gamma^{p(m+d_n(x^*))}\frac{|J_n|}{(|J_n|+\dist(x,J_n))^{p}}. 
\end{aligned}
\end{equation*}
Inequality \eqref{eq:techn2:10} then yields
\begin{equation}\label{eq:techn2:13}
 R^{p d_n(V)}|a_n|^p \|f_n\|_p^p  \leq (R\gamma)^{p(m+d_n(x^*))}\|f\|_p^p |V|^{p-1}\frac{|J_n|}{(|J_n|+\dist(x,J_n))^{p}}.
\end{equation}
We now have to sum this inequality. In order to do this we split our analysis depending on the value of $d_n(x^*)$. For fixed $j\in\bN_0$ we view $n\in T_{m}^{(3)}$ with $d_n(x^*)=j$. Let $\beta=1/4$, then, by Lemma \ref{lem:jinterval}, each point $t$ (which is not a grid point) belongs to at most $F_{k}$ intervals $J_{n}^\beta$ with $n\in T_{m}^{(3)}$ and $d_n(x^*)=j$. Here $J_n^\beta$ is the unique closed interval that satisfies the requirements
\[
|J_n^\beta|=\beta|J_n|\quad\text{and}\quad \inf J_n^\beta=\inf J_n.
\]
Furthermore, for $t\in J_n$, we have
\begin{equation*}
|J_n|+\dist(x,J_n)\geq x-t.
\end{equation*}
These facts allow us to estimate
\begin{equation*}
\begin{aligned}
\sum_{\substack{n\in T_{m}^{(3)}\\ d_n(x^*)=j}}\frac{|J_n||V|^{p-1}}{(|J_n|+\dist(x,J_n))^p}&\leq \beta^{-1} \sum_{\substack{n\in T_{m}^{(3)}\\ d_n(x^*)=j}}\int_{J_n^\beta}\frac{|V|^{p-1}}{(x-t)^p}\dif t \\
&\leq \frac{F_{k}}{\beta} |V|^{p-1}\int_{-\infty}^{x^*}(x-t)^{-p} \dif t\\
&\lesssim_{p} \frac{|V|^{p-1}}{(x-x^*)^{p-1}}\leq 1,
\end{aligned}
\end{equation*}
where in the last step we used \eqref{eq:techn2:8}.
Combining \eqref{eq:techn2:13} and the latter and summing over $j$ (here we use the fact that $R\gamma<1$), we arrive at
\begin{equation}\label{eq:techn2:16}
\sum_{n\in T_{m}^{(3)}} R^{pd_n(V)}|a_n|^p\|f_n\|_p^p\lesssim_{p,R} (R\gamma)^{pm}\|f\|_p^p.
\end{equation}

\textsc{Case 4: }$n\in T_{m}^{(4)}=\{n\in T_{m}: x\in J_n, |J_n\cap [\widetilde{x},x]|\geq |V|, J_n\not\subset[\widetilde{x},x]\}$. \\
We can ignore the cases $(m=0)$ or ($m=1$ and $[\widetilde{x},x]\cap \cT_n=\{x\}$) since these are settled in Case 2. We thus define $\widetilde{T}_{m}^{(4)}$ as the set of all remaining indices from $T_{m}^{(4)}$. Let $n\in \widetilde{T}_{m}^{(4)}$. Then the definition of $T_{m}^{(4)}$ implies 
\begin{equation}\label{eq:techn2:17}
d_n(V)=d_n([x,y])=0.
\end{equation}
Moreover, there exists at least one point of $\cT_n$ in $V$ (since $n\geq \polyfun(V)$ for $n\in T_m$) and at least one point of $\cT_n$ in $[\widetilde{x},x]$ (since $m\geq 1$). Thus we have the following two-sided bound on $|J_n|$:
\begin{equation}\label{eq:techn2:18}
|V|\leq |J_n|\leq 3|V|.
\end{equation}
Since $x\in J_n$ for all $n\in \widetilde{T}_{m}^{(4)}$, the family $\{J_n:n\in \widetilde{T}_m^{(4)}\}$ forms a decreasing collection of sets. Inequality \eqref{eq:techn2:18} and a multiple application of Lemma \ref{lem:jinterval} with sufficiently large $\beta$ gives us a constant $c_k$ depending only on $k$ such that
\begin{equation}\label{eq:techn2:19}
\card \widetilde{T}_{m}^{(4)}\leq c_k.
\end{equation}
We employ Lemma \ref{lem:lporthspline} and Lemma \ref{lem:orthsplineJinterval} respectively to get
\begin{equation}\label{eq:techn2:20}
\int_0^{\widetilde{x}}|f_n(t)|^p\dif t\lesssim \gamma^{pm}|J|^{p/2-p+1}=\gamma^{pm}|J|^{1-p/2}\lesssim \gamma^{pm}\|f_n\|_p^p.
\end{equation}
Thus we are able to conclude
\begin{equation*}
\begin{aligned}
\sum_{n\in \widetilde{T}_{m}^{(4)}}& R^{p d_n(V)} |a_n|^p\int_0^{\widetilde{x}}|f_n(t)|^p\dif t \\
&\lesssim \sum_{n\in \widetilde{T}_{m}^{(4)}}\int_V |f(t)|^p\dif t\cdot \Big(\int_V |f_n(t)|^{p'}\dif t\Big)^{p-1} \int_0^{\widetilde{x}}|f_n(t)|^p\dif t \\
&\lesssim
\sum_{n\in \widetilde{T}_{m}^{(4)}}\int_V |f(t)|^p\dif t\cdot \|f_n\|_{p'}^p \gamma^{pm}\|f_n\|_p^p, \\
&\leq\sum_{n\in \widetilde{T}_m^{(4)}}\gamma^{pm}\|f\|_p^p,
\end{aligned}
\end{equation*}
where we used \eqref{eq:techn2:17} and \eqref{eq:techn2:1} in the first inequality, \eqref{eq:techn2:20} in the second inequality and Lemma \ref{lem:orthsplineJinterval} in the last inequality.
Consequently, considering \eqref{eq:techn2:19}, the latter display implies
\begin{equation}\label{eq:techn2:22}
\sum_{n\in \widetilde{T}_{m}^{(4)}}R^{p d_n(V)}|a_n|^p\int_0^{\widetilde{x}} |f_n(t)|^p\dif t\lesssim \gamma^{pm}\|f\|_p^p.
\end{equation}

\textsc{Case 5: }$n\in T_{m}^{(5)}=\{n\in T_{m}:J_n\subset [x,\widetilde{y}]\text{ or }\big(x\in J_n\text{ with }|J_n\cap[\widetilde{x},x]|\leq |V|\text{ and }J_n\not\subset[\widetilde{x},x]\big)\}.$\\
If there exists $n\in T_{m}^{(5)}$ with $x_m=\inf J_n$, then we define $x'=x_m$. If there exists no such index, we set $x'=x$. 
We now fix $n\in T_m^{(5)}$. By definition of $x'$ and $\widetilde{x}$,
\begin{equation}\label{eq:techn2:25}
m+d_n(x')-k\leq d_n(\widetilde{x})\leq m+d_n(x').
\end{equation}
The exact relation between $d_n(\widetilde{x})$ and $d_n(x')$ depends on the multiplicity of the point $x'$ in the grid $\cT_n$. By definition of $T_m^{(5)}$,
\[
d(\widetilde{x},J_n)\leq 5|V|\qquad\text{and}\qquad |V|\leq d(\widetilde{x},J_n).
\]
Moreover,
\begin{equation}\label{eq:techn2:26}
|J_n|\leq |[x',\widetilde{y}]|\leq 4|V|\qquad\text{and}\qquad d_n(V)\leq d_n(x').
\end{equation}
The latter two displays now imply
\begin{equation*}
|J_n|+\dist(\widetilde{x},J_n)\sim |V|.
\end{equation*}
Lemma \ref{lem:lporthspline}, together with the former observation, yields
\begin{align*}
\int_0^{\widetilde{x}}|f_n(t)|^p\dif t&\lesssim  \gamma^{p d_n(\widetilde{x})}\frac{|J_n|^{p/2}}{(|J_n|+\dist(\widetilde{x},J_n))^{p-1}} \\
&\lesssim \gamma^{pd_n(\widetilde{x})}\frac{|J_n|^{p/2}}{|V|^{p-1}}.
\end{align*}
Inserting \eqref{eq:techn2:25} in this inequality, we get
\begin{equation}\label{eq:techn2:29}
\int_0^{\widetilde{x}}|f_n(t)|^p\dif t\lesssim\gamma ^{p(d_n(x')+m)}\frac{|J_n|^{p/2}}{|V|^{p-1}}.
\end{equation}

For each $n\in T_{m}^{(5)}$, we split the interval $[x',\widetilde{y}]$ into the union of three disjoint subintervals $I_\ell$, $1\leq \ell\leq 3$, defined by 
\begin{equation*}
I_1:=[x',\inf J_n],\qquad I_2:=J_n, \qquad I_3:=[\sup J_n,\widetilde{y}].
\end{equation*}
Corresponding to these subintervals, we set
\begin{equation*}
a_{n,\ell}:=\int_{I_\ell\cap V} f(t)f_n(t)\dif t,\qquad\ell=1,2,3.
\end{equation*}

We start with analyzing the parameter choice $\ell=2$ and first observe that by definition of $I_2$, 
\begin{equation}\label{eq:techn2:32}
|a_{n,2}|^p\leq \|f_n\|_{p'}^p \int_{J_n}|f(t)|^p\dif t.
\end{equation}
We split the index set $T_{m}^{(5)}$ further and look at the set of those $n\in T_m^{(5)}$ such that $d_n(x')=j$ for fixed $j\in\mathbb{N}_0$. These indices $n$ may be arranged in packets such that the intervals $J_n$ from one packet have the same left endpoint and such that the maximal intervals of different packets are disjoint. Observe that the intervals $J_n$ from one packet form a decreasing collection of sets. Let $J_{n_0}$ be the maximal interval of one packet. Define the index set $\mathcal{I}_j:=\{n\in T_m^{(5)}: d_n(x')=j,\ J_n\subset J_{n_0}\}$. Then we use \eqref{eq:techn2:26} and \eqref{eq:techn2:32} to estimate
\begin{align*}
E_{2,j}&:=\sum_{n\in\mathcal{I}_j} R^{p d_n(V)}|a_{n,2}|^p\int_{0}^{\widetilde{x}}|f_n(t)|^p\dif t \\
&\leq \sum_{n\in \mathcal{I}_j} R^{pj} \|f_n\|_{p'}^p \int_{J_n}|f(t)|^p\dif t \int_0^{\widetilde{x}} |f_n(t)|^p\dif t.
\end{align*}
We continue and use \eqref{eq:techn2:29} to get 
\[
E_{2,j}\lesssim  R^{pj}\int_{J_{n_0}}|f(t)|^p\dif t \sum_{n\in\mathcal I_j} \|f_n\|_{p'}^p \gamma^{p(d_n(x')+m)}\frac{|J_n|^{p/2}}{|V|^{p-1}}.
\]
By Lemma \ref{lem:orthsplineJinterval}, $\|f_n\|_{p'}\sim |J|^{1/p'-1/2}$, an thus,
\[
E_{2,j}\lesssim (R\gamma)^{pj} \gamma^{pm} \int_{J_{n_0}} |f(t)|^p\dif t \cdot\sum_{n\in\mathcal{I}_j} \frac{|J_n|^{p-1}}{|V|^{p-1}}.
\]
We apply Lemma \ref{lem:jinterval} to the above sum and conclude
\begin{align*}
E_{2,j}&\lesssim_p (R\gamma)^{pj}\gamma^{pm}\int_{J_{n_0}}|f(t)|^p\dif t\cdot \frac{|J_{n_0}|^{p-1}}{|V|^{p-1}} \\
&\lesssim (R\gamma)^{pj}\gamma^{pm}\int_{J_{n_0}}|f(t)|^p\dif t,
\end{align*}
where in the last inequality, we used \eqref{eq:techn2:26}.
Now, summing over all maximal intervals $J_{n_0}$ and over $j$ finally yields (note that $R\gamma<1$) 
\begin{equation}\label{eq:techn2:34}
\sum_{n\in T_{m}^{(5)}} R^{p d_n(V)}|a_{n,2}|^p \int_{0}^{\widetilde{x}} |f_n(t)|^p\dif t \lesssim_{p,R} \gamma^{pm}\|f\|_p^p.
\end{equation}
This completes the proof of the part $\ell=2$.

We continue with the parameter choice $\ell=3$. Let $j\in \bN_0$ fixed and let $(n_{j,r})_{r=1}^\infty$ be the subsequence of all $n\in T_{m}^{(5)}$ with $d_n(x')=j$. For two such indices $n_1<n_2$ we have either
\begin{equation*}
(\inf J_{n_1}=\inf J_{n_2}\text{ and }J_{n_2}\subset J_{n_1})\quad\text{or}\quad \sup J_{n_2}\leq \inf J_{n_1}.
\end{equation*}
Observe that $J_{n_2}=J_{n_1}$ is possible, but by Lemma \ref{lem:jinterval} (with $\beta=0$) only $F_k$ times with $F_k$ only depending on $k$. Therefore, with $\beta_{n_{j,r}}:=\sup J_{n_{j,r}}$ for $r\geq 1$ and $\beta_{n_{j,0}}:=\widetilde{y}$,
\begin{equation*}
d_{n_{j,s}}(\beta_{n_{j,r}})\geq \frac{s-r}{F_k}-1,\qquad s\geq r\geq 1.
\end{equation*}
Thus we obtain for $s\geq r\geq 1$ by Lemma \ref{lem:lporthspline} and Lemma \ref{lem:orthsplineJinterval}
\begin{equation}\label{eq:techn2:37}
\begin{aligned}
\int_{\beta_{n_{j,r}}}^{\beta_{n_{j,r-1}}} |f_{n_{j,s}}(t)|^{p'}\dif t&\lesssim \gamma^{p'd_{n_{j,s}}(\beta_{n_{j,r}})}  \|f_{n_{j,s}}\|_{p'}^{p'} 
\lesssim \gamma^{p'\frac{s-r}{F_k}} \|f_{n_{j,s}}\|_{p'}^{p'}
\end{aligned}
\end{equation}
and similarly, using also \eqref{eq:techn2:25},
\begin{equation}\label{eq:techn2:37.5}
\int_{0}^{\widetilde{x}}|f_{n_{j,s}}|^p\dif t\lesssim \gamma^{p d_{n_{j,s}}(\widetilde{x})}\|f_{n_{j,s}}\|^p_p \lesssim \gamma^{p (m+d_{n_{j,s}}(x'))}\|f_{n_{j,s}}\|^p_p.
\end{equation}
Choosing $\kappa:=\gamma^{1/(2F_k)}<1$, we conclude
\begin{equation*}
\begin{aligned}
|a_{n_{j,s},3}|^p&=\Big|\int_{\beta_{n_{j,s}}}^{\widetilde{y}} f(t) f_{n_{j,s}}(t)\dif t \Big|^p \\
&=\Big|\sum_{r=1}^s \kappa^{s-r}\kappa^{r-s}\int_{\beta_{n_{j,r}}}^{\beta_{n_{j,r-1}}} f(t) f_{n_{j,s}}(t)\dif t\Big|^p \\
&\leq \Big(\sum_{r=1}^s\kappa^{p'(s-r)}\Big)^{p/p'}\sum_{r=1}^s \kappa^{p(r-s)}\Big|\int_{\beta_{n_{j,r}}}^{\beta_{n_{j,r-1}}} f(t) f_{n_{j,s}}(t)\dif t\Big|^p \\
&\lesssim \sum_{r=1}^s \kappa^{p(r-s)} \int_{\beta_{n_{j,r}}}^{\beta_{n_{j,r-1}}} |f(t)|^p \dif t\cdot
\Big(\int_{\beta_{n_{j,r}}}^{\beta_{n_{j,r-1}}} |f_{n_{j,s}}(t)|^{p'}\dif t\Big)^{p/p'}. \\
\end{aligned}
\end{equation*}
We use inequality \eqref{eq:techn2:37} to obtain from the latter expression
\begin{equation}\label{eq:techn2:38}
|a_{n_{j,s},3}|^p \lesssim\sum_{r=1}^s  \gamma^{p\frac{s-r}{2F_k}} \int_{\beta_{n_{j,r}}}^{\beta_{n_{j,r-1}}} |f(t)|^p \dif t \cdot \|f_{n_{j,s}}\|_{p'}^p.
\end{equation}
Combining \eqref{eq:techn2:38} and \eqref{eq:techn2:37.5} yields
\begin{equation*}
\begin{aligned}
E_{3,j}&:=\sum_{\substack{n\in T_{m}^{(5)}\\d_n(x')=j}} R^{p d_n(V)}|a_{n,3}|^p \|f\|_{L^p(0,\widetilde{x})}^p  \\
&= \sum_{s\geq 1} R^{pj}|a_{n_{j,s},3}|^p \|f_{n_{j,s}}\|_{L^p(0,\widetilde{x})}^p \\
&\lesssim \sum_{s\geq 1}R^{pj}\sum_{r=1}^s \gamma^{p\frac{s-r}{2F_k}}\|f_{n_{j,s}}\|_{p'}^p \int_{\beta_{n_{j,r}}}^{\beta_{n_{j,r-1}}} |f(t)|^p \dif t\cdot \gamma^{p(m+j)} \|f_{n_{j,s}}\|^p_p. \\
\end{aligned}
\end{equation*}
Using again Lemma \ref{lem:orthsplineJinterval} gives
\begin{align*}
E_{3,j}&\lesssim \gamma^{pm}(R\gamma)^{pj}\sum_{r\geq 1} \int_{\beta_{n_{j,r}}}^{\beta_{n_{j,r-1}}} |f(t)|^p \dif t\sum_{s\geq r}\gamma^{p\frac{s-r}{2F_k}} \\
&\lesssim \gamma^{pm}(R\gamma)^{pj}\|f\|_p^p.
\end{align*}
Summing over $j$ finally yields
\begin{equation}\label{eq:techn2:40}
\sum_{n\in T_{m}^{(5)}} R^{p d_n(V)}|a_{n,3}|^p\|f\|_{L^p(0,\widetilde{x})}^p\lesssim_{p,R} \gamma^{pm}\|f\|_p^p,
\end{equation}
since $R\gamma<1$. This finishes the proof of the part $\ell=3$.

We now come to the final part $\ell=1$. Let $j$ and $n$ be fixed such that $d_n(x')=j$ and let $L_{1,n},\dots, L_{j,n}$ be the grid intervals in the grid $\mathcal T_n$ between $x'$ and $J_n$ from left to right. Observe that $f_n$ is a polynomial on each of the intervals  $L_{i,n}$. We define
\begin{equation*}
b_{i,n}:=\int_{L_{i,n}} f(t)f_n(t)\dif t,\qquad 1\leq i\leq j.
\end{equation*}
For $n$ with $d_n(x')=j$, we clearly have $a_{n,1}=\sum_{i=1}^j b_{i,n}$ and Hölder's inequality implies
\begin{equation}\label{eq:techn2:42}
|b_{i,n}|^p\leq \int_{L_{i,n}} |f(t)|^p\dif t \cdot\Big(\int_{L_{i,n}}|f_n(t)|^{p'}\dif t\Big)^{p/p'}.
\end{equation}
Remark \ref{rem:linftyorthspline} yields the bound
\begin{equation*}
\sup_{t\in L_{i,n}}|f_n(t)|\lesssim \gamma^{j-i}\frac{|J_n|^{1/2}}{|J_n|+\dist(J_n,L_{i,n})+|L_{i,n}|}
\end{equation*}
and inserting this in \eqref{eq:techn2:42} gives
\begin{equation}\label{eq:techn2:44}
|b_{i,n}|^p\leq \int_{L_{i,n}} |f(t)|^p\dif t\cdot \gamma^{p(j-i)}\frac{|J_n|^{p/2}|L_{i,n}|^{p-1}}{(|J_n|+\dist(J_n,L_{i,n})+|L_{i,n}|)^p}.
\end{equation}
Observe that we have the elementary inequality
\begin{equation}\label{eq:techn2:45}
\begin{aligned}
\frac{|J_n|^{p/2}|L_{i,n}|^{p-1}}{(|J_n| +\dist(J_n,L_{i,n})+|L_{i,n}|)^p}&\frac{|J_n|^{p/2}}{|V|^{p-1}}\leq 
\\ & \frac{|J_n|}{|V|^{p-1}}(|J_n|+ \dist(J_n,L_{i,n})+|L_{i,n}|)^{p-2}.
\end{aligned}
\end{equation}
Combining \eqref{eq:techn2:44}, \eqref{eq:techn2:45} and \eqref{eq:techn2:29} allows us to estimate (recall that we assumed $n$ is such that $d_n(x')=j$)
\begin{equation}\label{eq:techn2:46}
\begin{aligned}
R&^{pd_n(V)}|b_{i,n}|^p \cdot \int_0^{\widetilde{x}} |f_n(t)|^p\dif t\\
&\lesssim R^{pj} \gamma^{p(j-i)}\int_{L_{i,n}} |f(t)|^p\dif t \frac{|J_n|^{p/2}|L_{i,n}|^{p-1}}{(|J_n|+\dist(J_n,L_{i,n})+|L_{i,n}|)^p}\cdot \gamma^{p(j+m)}\frac{|J_n|^{p/2}}{|V|^{p-1}} \\
&\lesssim R^{pj}\gamma^{p(2j+m-i)} \frac{|J_n|}{|V|^{p-1}}(|J_n|+\dist(J_n,L_{i,n})+|L_{i,n}|)^{p-2}\int_{L_{i,n}}|f(t)|^p\dif t.
\end{aligned}
\end{equation}
For fixed $j$ and $i$ we view those indices $n$ such that $d_n(x')=j$ and consider the corresponding intervals $L_{i,n}$. These intervals can be collected in packets such that intervals $L_{i,n}$ from one packet have the same left endpoint and maximal intervals of different packets are disjoint. 
For $\beta=1/4$, we denote by $J_{n}^\beta$ the unique interval that has the same right endpoint as $J_n$ and length $\beta |J_n|$. The intervals $J_n$ corresponding to $L_{i,n}$'s from one packet can now be grouped in the same way as the $L_{i,n}$'s and thus, Lemma \ref{lem:jinterval} implies the existence of a constant $F_{k}$ depending only on $k$ such that every point $t\in[0,1]$ belongs to at most $F_k$ intervals $J_n^\beta$ corresponding to the intervals $L_{i,n}$ from one packet.
We now consider one such packet and denote by $u^*$ the left endpoint of (all) intervals $L_{i,n}$ in this packet. Then we have for $t\in J_n^\beta$ 
\begin{equation}\label{eq:techn2:47}
|J_n|+\dist(L_{i,n},J_n)+|L_{i,n}|\geq |t-u^*|.
\end{equation}
If $L_{i}^*$ is the maximal interval of the present packet, \eqref{eq:techn2:46} and \eqref{eq:techn2:47} yield 
\begin{equation*}
\begin{aligned}
&\hspace{-1cm}\sum_{n:L_{i,n}\text{ in one packet}} R^{p d_n(V)}|b_{i,n}|^p \|f_n\|_{L^p(0,\widetilde{x})}^p \\
&\lesssim \frac{R^{pj}\gamma^{p(2j+m-i)}}{|V|^{p-1}} \sum_n|J_n|(|J_n|+\dist(L_{i,n},J_n)+|L_{i,n}|)^{p-2}\int_{L_{i,n}}|f(t)|^p\dif t \\
&\lesssim \frac{R^{pj}\gamma^{p(2j+m-i)}}{|V|^{p-1}}\int_{L_{i}^*}|f(t)|^p\dif t\cdot \sum_n \int_{J_n^\beta} |t-u^*|^{p-2}\dif t.
\end{aligned}
\end{equation*}
Since every point $t$ belongs to at most $F_k$ intervals $J_n^\beta$ in one package of $L_{i,n}$'s, we can continue this chain of inequalities and get further, by using the facts $J_n\subset [x',\widetilde{y}]$ and $p<2$:
\begin{equation*}
\begin{aligned}
&\hspace{-1.5cm}\sum_{n:L_{i,n}\text{ in one packet}} R^{p d_n(V)}|b_{i,n}|^p \|f_n\|_{L^p(0,\widetilde{x})}^p \\
&\lesssim \frac{R^{pj}\gamma^{p(2j+m-i)}}{|V|^{p-1}}\int_{L_{i}^*}|f(t)|^p\dif t\cdot \int_{u^*}^{\widetilde{y}} |t-u^*|^{p-2}\dif t \\
&\lesssim R^{pj}\gamma^{p(2j+m-i)}\int_{L_{i}^*}|f(t)|^p\dif t,
\end{aligned}
\end{equation*}
where in the last inequality we used \eqref{eq:techn2:26}. Since the maximal intervals $L_i^*$ of different packets are disjoint, we can sum over all packets (for fixed $j$ and $i$) to obtain 
\begin{equation}\label{eq:techn2:50}
\sum_{\substack{n\in T_{m}^{(5)}\\d_n(x')=j}} R^{p d_n(V)}|b_{i,n}|^p\|f_n\|_{L^p(0,\widetilde{x})}^p \lesssim    R^{pj}\gamma^{p(2j+m-i)}\|f\|_p^p.
\end{equation}
Let $\kappa:=\gamma^{1/2}<1$. Then, for $n$ such that $d_n(x')=j$ we have 
\begin{equation}\label{eq:techn2:51}
|a_{n,1}|^p=\Big|\sum_{i=1}^j b_{i,n}\Big|^p=\Big| \sum_{i=1}^j \kappa^{j-i}\kappa^{i-j}b_{i,n}\Big|^p \lesssim_p\sum_{i=1}^j \kappa^{p(i-j)}|b_{i,n}|^p
\end{equation}
Combining \eqref{eq:techn2:51} with \eqref{eq:techn2:50} we get 
\begin{equation*}
\begin{aligned}
\sum_{\substack{n\in T_{m}^{(5)}\\d_n(x')=j}}& R^{p d_n(V)}|a_{1,n}|^p\|f_n\|_{L^p(0,\widetilde{x})}^p \\ &\lesssim_p \sum_{i=1}^j \kappa^{p(i-j)}\sum_{\substack{n\in T_{m}^{(5)}\\d_n(x')=j}} R^{p d_n(V)}|b_{i,n}|^p\|f_n\|_{L^p(0,\widetilde{x})}^p \\
&\lesssim \sum_{i=1}^j \kappa^{p(i-j)} R^{pj}\gamma^{p(2j+m-i)}\|f\|_p^p\lesssim (R\gamma)^{pj}\gamma^{pm}\|f\|_p^p.
\end{aligned}
\end{equation*}
Since $R\gamma<1$ we sum over $j$ to conclude finally 
\begin{equation}\label{eq:techn2:53}
\sum_{n\in T_{m}^{(5)}} R^{p d_n(V)}|a_{n,1}|^p \|f_n\|_{L^p(0,\widetilde{x})}^p \lesssim_{p,R} \gamma^{pm}\|f\|_p^p
\end{equation}
This finishes the proof of case $\ell=1$.

We can now combine the proved inequalities for $\ell=1,2,3$, that is \eqref{eq:techn2:53}, \eqref{eq:techn2:34} and \eqref{eq:techn2:40}, to complete the analysis of Case 5 with the estimate
\begin{equation}\label{eq:techn2:54}
\sum_{n\in T_{m}^{(5)}} R^{pd_n(V)} |a_n|^p \|f_n\|_{L^p(0,\widetilde{x})}^p\lesssim_{p,R} \gamma^{pm}\|f\|_p^p.
\end{equation}

\textsc{Case 6: }$n\in T_{m}^{(6)}=\{n\in T_{m}:J_n\subset [\widetilde{y},1]\text{ or }\big(\widetilde{y}\in J_n\text{ with }J_n\not\subset [x,\widetilde{y}]\big)\}$. \\
Similarly to \eqref{eq:techn2:0.5}, we may use the symmetric splitting of the indices $n$ to
\begin{equation*}
T_{\operatorname{r},s}:=\{n\geq \polyfun(V):|[y,\widetilde{y}]\cap \cT_n|=s\},
\end{equation*}
where $\operatorname{r}$ stands for ``right". These collections of indices are again splitted into six subcollections $T_{\operatorname{r},s}^{(i)}$, $1\leq i\leq 6$, where the two of interest are
\begin{equation*}\label{eq:techn2:56}
\begin{aligned}
T_{\operatorname{r},s}^{(2)}&=\{n\in T_{r,s}:\widetilde{y}\in J_n, |J_n\cap [y,\widetilde{y}]|\geq |V|, J_n\not\subset [y,\widetilde{y}]\}, \\
T_{\operatorname{r},s}^{(3)}&=\{ n\in T_{r,s}: J_n\subset [\widetilde{y},1] \text{ or } \\ &\qquad\big(\widetilde{y}\in J_n \text{ with } |J_n\cap [y,\widetilde{y}]|\leq |V| \text{ and } J_n\not\subset[y,\widetilde{y}]\big)\}.
\end{aligned}
\end{equation*}
The results \eqref{eq:techn2:7} and \eqref{eq:techn2:16} for $T_{m}^{(2)}$ and $T_{m}^{(3)}$ respectively had the form
\begin{equation*}
\sum_{n\in T_{m}^{(2)}\cup T_{m}^{(3)}} R^{pd_n(V)}|a_n|^p \|f_n\|_p^p \lesssim_{p,R} (R\gamma)^{pm}\|f\|_p^p.
\end{equation*}
Observe that the $p$-norm of $f_n$ on the left hand side of the inequality is over the whole interval $[0,1]$. The same argument as for $T_m^{(2)}$ and $T_m^{(3)}$ yields 
\begin{equation}\label{eq:techn2:58}
\sum_{n\in T_{\operatorname{r},s}^{(2)}\cup T_{\operatorname{r},s}^{(3)}} R^{pd_n(V)}|a_n|^p \|f_n\|_p^p \lesssim_{p,R} (R\gamma)^{ps}\|f\|_p^p.
\end{equation}
Now, since
\begin{equation*}
\bigcup_{m\geq 0} T_{m}^{(6)}\subset \bigcup_{s\geq 0} T_{\operatorname{r},s}^{(2)}\cup T_{\operatorname{r},s}^{(3)},
\end{equation*}
inequality \eqref{eq:techn2:58} implies
\begin{equation}\label{eq:techn2:60}
\begin{aligned}
&\hspace{-0.5cm}\sum_{m=0}^\infty \sum_{n\in T_{m}^{(6)}} R^{pd_n(V)} |a_n|^p \|f_n\|_p^p  \\
&\leq \sum_{s=0}^\infty \sum_{n\in T_{r,s}^{(2)}\cup T_{r,s}^{(3)}} R^{p d_n(V)} |a_n|^p\|f_n\|_p^p\lesssim_{p,R} \|f\|_p^p
\end{aligned}
\end{equation}
After summation of \eqref{eq:techn2:4}, \eqref{eq:techn2:7}, \eqref{eq:techn2:16}, \eqref{eq:techn2:22} and \eqref{eq:techn2:54} over $m$, we add inequality \eqref{eq:techn2:60} to obtain finally
\begin{equation*}
\sum_{n\geq \polyfun(V)} R^{pd_n(V)} |a_n|^p \|f_n\|_{L^p(0,\widetilde{x})}^p \lesssim_{p,R} \|f\|_p^p,
\end{equation*}
The symmetric inequality
\begin{equation*}
\sum_{n\geq \polyfun(V)} R^{pd_n(V)} |a_n|^p \|f_n\|_{L^p(\widetilde{y},1)}^p \lesssim_{p,R} \|f\|_p^p
\end{equation*}
is treated analogously and thus, the proof of the lemma is completed.
\end{proof}

\section{Proof of the Main Theorem}\label{sec:main}
In this section, we prove our main result Theorem \ref{thm:uncond}, that is unconditionality of orthonormal spline systems corresponding to an arbitrary admissible point sequence $(t_n)_{n\geq 0}$ in reflexive $L^p$.
\begin{proof}[Proof of Theorem \ref{thm:uncond}]
We recall the notation
\[
Sf(t)=\Big(\sum_{n=-k+2}^\infty |a_nf_n(t)|^2\Big)^{1/2},\quad Mf(t)=\sup_{m\geq -k+2}\Big|\sum_{n=-k+2}^m a_n f_n(t)\Big|
\]
when
\[
f=\sum_{n=-k+2}^\infty a_nf_n.
\]
Since $(f_n)_{n=-k+2}^\infty$ is a basis in $L^p[0,1],\ 1\leq p<\infty,$ Khintchine's inequality implies that a necessary and sufficient condition for $(f_n)_{n=-k+2}^\infty$ to be an unconditional basis in $L^p[0,1]$ for some $p$ in the range $1<p<\infty$ is
\begin{equation}\label{eq:maintoprove}
\|Sf\|_p \sim_p \|f\|_p,\quad f\in L^p[0,1].
\end{equation}
We will prove \eqref{eq:maintoprove} for $1<p<2$ since the cases $p>2$ then follow by a duality argument.

We first prove the inequality 
\begin{equation}\label{eq:firsttoprove}
\|f\|_p\lesssim_p \|Sf\|_p.
\end{equation}
To begin with, let $f\in L^p[0,1]$ with $f=\sum_{n=-k+2}^\infty a_n f_n$. Without loss of generality, we may assume that the sequence $(a_n)_{n\geq -k+2}$ has only finitely many nonzero entries. We will prove \eqref{eq:firsttoprove} by showing the inequality $\|Mf\|_p\lesssim_p \|Sf\|_p$ and we first observe that 
\begin{equation}\label{eq:vertfkt}
\|Mf\|_p^p = p \int_0^\infty \lambda^{p-1} \psi(\lambda)\dif \lambda,
\end{equation}
with $\psi(\lambda):=[Mf>\lambda].$ Next we decompose $f$ into two parts $\varphi_1,\varphi_2$ and estimate the corresponding distribution functions $\psi_i(\lambda):=[M\varphi_i >\lambda/2]$, $i\in\{1,2\}$, separately. We continue with the definition of the functions $\varphi_i$. 
For $\lambda>0$, we define
\begin{align*}
E_\lambda &:= [Sf>\lambda],& B_\lambda&:=[\cM\charfun_{E_\lambda}>1/2], \\
\Gamma&:=\{n:J_n\subset B_\lambda,-k+2\leq n<\infty\},& \Lambda&:=\Gamma^c,
\end{align*}
where we recall that $J_n$ is the characteristic interval corresponding to the grid point $t_n$ and the function $f_n$.
Then, let
\begin{align*}
\varphi_1:= \sum_{n\in\Gamma} a_nf_n\qquad\text{and}\qquad \varphi_2:=\sum_{n\in\Lambda}a_nf_n.
\end{align*}
Now we estimate $\psi_1=[M\varphi_1>\lambda/2]$:
\begin{align*}
\psi_1(\lambda)
&=|\{t\in B_\lambda: M\varphi_1(t)>\lambda/2\}|+|\{t\notin B_\lambda: M\varphi_1(t)>\lambda/2\}| \\
&\leq |B_\lambda|+\frac{2}{\lambda}\int_{B_\lambda^c}M\varphi_1(t)\dif t \\
&\leq |B_\lambda|+\frac{2}{\lambda}\int_{B_\lambda^c} \sum_{n\in \Gamma} |a_nf_n(t)|\dif t.
\end{align*}
We decompose $B_\lambda$ into a disjoint collection of open subintervals of $[0,1]$ and apply Lemma \ref{lem:techn1} to each of those intervals to conclude from the latter expression
\begin{align*}
\psi_1(\lambda) &\lesssim |B_\lambda|+\frac{1}{\lambda}\int_{B_\lambda} Sf(t)\dif t \\
&= |B_\lambda|+\frac{1}{\lambda}\int_{B_\lambda\setminus E_\lambda} Sf(t)\dif t +\frac{1}{\lambda}\int_{E_\lambda\cap B_\lambda}Sf(t)\dif t \\
&\leq |B_\lambda|+|B_\lambda\setminus E_\lambda|+\frac{1}{\lambda}\int_{E_\lambda} Sf(t)\dif t,
\end{align*}
where in the last inequality, we simply used the definition of $E_\lambda$.
Since the Hardy-Littlewood maximal function operator $\mathcal M$ is of weak type (1,1), $|B_\lambda|\lesssim |E_\lambda|$ and thus we obtain finally
\begin{equation}
\label{eq:main:5}
\psi_1(\lambda)\lesssim |E_\lambda|+\frac{1}{\lambda}\int_{E_\lambda} Sf(t)\dif t.
\end{equation}
We now estimate $\psi_2(\lambda)$ and obtain from Theorem \ref{thm:maxbound} and the fact that $\mathcal M$ is a bounded operator on $L^2[0,1]$
\begin{equation*}
\begin{aligned}
\psi_2(\lambda)&\lesssim \frac{1}{\lambda^2}\|\cM \varphi_2\|_2^2\lesssim \frac{1}{\lambda^2}\|\varphi_2\|_2^2=\frac{1}{\lambda^2}\|S\varphi_2\|_2^2 \\
&=\frac{1}{\lambda^2}\Big(\int_{E_\lambda}S\varphi_2(t)^2\dif t+\int_{E_\lambda^c}S\varphi_2(t)^2\dif t\Big).
\end{aligned}
\end{equation*}
We apply Lemma \ref{lem:Sf} on the former expression to get
\begin{equation}\label{eq:main:3.5}
\psi_2(\lambda)\lesssim \frac{1}{\lambda^2}\int_{E_\lambda^c} S\varphi_2(t)^2\dif t
\end{equation}
Thus, combining \eqref{eq:main:5} and \eqref{eq:main:3.5},
\begin{equation*}
\begin{aligned}
\psi(\lambda)&\leq \psi_1(\lambda)+\psi_2(\lambda)\\
&\lesssim |E_\lambda|+\frac{1}{\lambda}\int_{E_\lambda}Sf(t)\dif t+\frac{1}{\lambda^2}\int_{E_\lambda^c}Sf(t)^2\dif t.
\end{aligned}
\end{equation*}
Inserting this inequality into \eqref{eq:vertfkt}, 
\begin{equation*}
\begin{aligned}
\|Mf\|_p^p
&\lesssim p\int_0^\infty \lambda^{p-1} |E_\lambda|\dif\lambda+p\int_0^\infty \lambda^{p-2}\int_{E_\lambda} Sf(t)\dif t\dif\lambda \\
&\quad+p\int_0^\infty \lambda^{p-3} \int_{E_\lambda^c}Sf(t)^2\dif t\dif\lambda \\
&=\|Sf\|_p^p+p\int_0^1 Sf(t)\int_0^{Sf(t)}\lambda^{p-2} \dif\lambda\dif t \\
&\quad + p\int_0^1 Sf(t)^2 \int_{Sf(t)}^\infty \lambda^{p-3}\dif\lambda\dif t,
\end{aligned}
\end{equation*}
and thus, since $1<p<2$, 
\[
\|Mf\|_p \lesssim_p \|Sf\|_p.
\]
So, the inequality $\|f\|_p\lesssim_p \|Sf\|_p$ is proved.

We now turn to the proof of the inequality
\begin{equation}\label{eq:mainproofsquarefunction}
\|Sf\|_p\lesssim_p \|f\|_p,\qquad 1<p<2.
\end{equation}
It is enough to show that the operator $S$ is of weak type $(p,p)$ for each exponent $p$ in the range $1<p<2$. This is because $S$ is (clearly) also of strong type $2$ and we can use the Marcinkiewicz interpolation theorem to obtain \eqref{eq:mainproofsquarefunction}.
Thus we have to show
\begin{equation}\label{eq:mainproofweaktypesquarefunction}
|[Sf>\lambda]|\lesssim_p \frac{\|f\|_p^p}{\lambda^p},\qquad f\in L^p[0,1],\ \lambda>0.
\end{equation}
We fix the function $f$ and the parameter $\lambda>0$. To begin with the proof of \eqref{eq:mainproofweaktypesquarefunction}, we define $G_\lambda:=[\mathcal Mf>\lambda]$ for $\lambda>0$ and observe that
\begin{equation}\label{eq:weaktypeGlambda}
|G_\lambda| \lesssim_p \frac{\|f\|_p^p}{\lambda^p},
\end{equation}
since $\mathcal M$ is of weak type $(p,p)$, and, by the Lebesgue differentiation theorem,
\begin{equation}\label{eq:lebesgue}
|f|\leq \lambda\qquad\text{a.\,e. on $G_\lambda^c$}.
\end{equation}
We decompose the open set $G_\lambda\subset[0,1]$ into a collection $(V_j)_{j=1}^\infty$ of disjoint open subintervals of $[0,1]$ and split the function $f$ into the two parts $h$ and $g$ defined by
\begin{equation*}
h:=f\cdot\charfun_{G_\lambda^c}+\sum_{j=1}^\infty T_{V_j}f,\qquad g:=f-h,
\end{equation*}
where for fixed index $j$, $T_{V_j}f$ is the projection of $f\cdot\charfun_{V_j}$ onto the space of polynomials of order $k$ on the interval $V_j$.

We treat the functions $h,g$ separately and begin with $h$. The definition of $h$ implies
\[
\|h\|_2^2=\int_{G_\lambda^c} |f(t)|^2\dif t+\sum_{j=1}^\infty \int_{V_j} (T_{V_j}f)(t)^2\dif t,
\]
since the intervals $V_j$ are disjoint. We apply \eqref{eq:lebesgue} to the first summand and \eqref{eq:polyproj1} to the second to obtain
\[
\|h\|_2^2 \lesssim \lambda^{2-p}\int_{G_\lambda^c} |f(t)|^p\dif t+\lambda^2 |G_\lambda|, 
\]
and thus, in view of \eqref{eq:weaktypeGlambda},
\[
\|h\|_2^2\lesssim_p \lambda^{2-p}\|f\|_p^p.
\]
This inequality allows us to estimate
\begin{equation*}
|[Sh>\lambda/2]|\leq \frac{4}{\lambda^2}\|Sh\|_2^2 = \frac{4}{\lambda^2}\|h\|_2^2\lesssim_p \frac{\|f\|_p^p}{\lambda^p},
\end{equation*}
which concludes the proof of \eqref{eq:mainproofweaktypesquarefunction} for the part $h$.

We turn to the proof of \eqref{eq:mainproofweaktypesquarefunction} for the function $g$. Since $p<2$, we have 
\begin{equation}\label{eq:main:8}
Sg(t)^p=\Big(\sum_{n=-k+2}^\infty|\langle g,f_n\rangle|^2 f_n(t)^2\Big)^{p/2}\leq \sum_{n=-k+2}^\infty |\langle g,f_n\rangle|^p |f_n(t)|^p
\end{equation}
For each index $j$, we define $\widetilde{V}_j$ to be the open interval with the same center as $V_j$ but with $5$ times its length. Then, set $\widetilde{G}_\lambda:=\bigcup_{j=1}^\infty \widetilde{V}_j\cap [0,1]$ and observe that $|\widetilde{G}_\lambda|\leq 5|G_\lambda|$. We get
\begin{equation*}
|[Sg>\lambda/2]|\leq |\widetilde{G}_\lambda|+\frac{2^p}{\lambda^p}\int_{\widetilde{G}_\lambda^c}Sg(t)^p\dif t.
\end{equation*}
By \eqref{eq:weaktypeGlambda}  and \eqref{eq:main:8}, this becomes
\[
|[Sg>\lambda/2]|\lesssim_p \lambda^{-p}\Big(\|f\|_p^p+\sum_{n=-k+2}^\infty \int_{\widetilde{G}_\lambda^c}|\langle g,f_n\rangle|^p|f_n(t)|^p\dif t\Big).
\]
But by definition of $g$ and \eqref{eq:polyproj2},
\[
\|g\|_p^p=\sum_j \int_{V_j} |f(t)-T_{V_j}f(t)|^p\dif t\lesssim_p\sum_j\int_{V_j}|f(t)|^p\lesssim \|f\|_p^p,
\]
so in order to prove the inequality $|[Sg>\lambda/2]|\leq \lambda^{-p}\|f\|_p^p$, it is enough to show the inequality
\begin{equation}\label{eq:main:finaltoprove}
\sum_{n=-k+2}^\infty \int_{\widetilde{G}_\lambda^c} |\langle g,f_n\rangle|^p|f_n(t)|^p\dif t\lesssim \|g\|_p^p.
\end{equation}
We now let $g_j:=g\cdot \charfun_{V_j}$. The supports of $g_j$ are therefore disjoint and we have $\|g\|_p^p=\sum_{j=1}^\infty \|g_j\|_p^p$. Furthermore $g=\sum_{j=1}^\infty g_j$ with convergence in $L^p$. Thus for each $n$, we obtain
\[
\langle g,f_n\rangle=\sum_{j=1}^\infty \langle g_j,f_n\rangle,
\]
and it follows from the definition of $g_j$ that
\begin{equation*}
\int_{V_j} g_j(t)p(t)\dif t=0
\end{equation*}
for each polynomial $p$ on $V_j$ of order $k$. This implies that $\langle g_j,f_n\rangle=0$ for $n<\polyfun(V_j)$, where 
\[
\polyfun(V):=\min\{n:\cT_n\cap V\neq\emptyset\}.
\]
Thus we obtain for all $R>1$ and for every $n$ 
\begin{equation}\label{eq:main:9}
\begin{aligned}
|\langle g,f_n\rangle |^p&=\Big| \sum_{j:n\geq\polyfun(V_j)}\langle g_j,f_n\rangle\Big|^p\leq \Big(\sum_{j:n\geq \polyfun(V_j)}R^{d_n(V_j)}|\langle g_j,f_n\rangle|R^{-d_n(V_j)}\Big)^p \\
&\leq \Big(\sum_{j:n\geq \polyfun(V_j)}R^{p d_n(V_j)}|\langle g_j,f_n\rangle|^p\Big)\Big(\sum_{j:n\geq\polyfun(V_j)}R^{-p'd_n(V_j)}\Big)^{p/p'},
\end{aligned}
\end{equation}
where $p'=p/(p-1)$.
If we fix $n\geq\polyfun(V_j)$, there is at least one point of the partition $\mathcal T_n$ contained in $V_j$. This implies that for each fixed $s\geq 0$, there are at most two indices $j$ such that $n\geq \polyfun(V_j)$ and $d_n(V_j)=s$. Therefore, 
\[
\Big(\sum_{j:n\geq\polyfun(V_j)}R^{-p'd_n(V_j)}\Big)^{p/p'}\lesssim_p 1,
\]
thus we obtain from \eqref{eq:main:9},
\begin{equation*}
|\langle g,f_n\rangle |^p\lesssim_p \sum_{j:n\geq\polyfun(V_j)}R^{pd_n(V_j)}|\langle g_j,f_n\rangle|^p.
\end{equation*}
Now we insert this inequality in \eqref{eq:main:finaltoprove} to get
\begin{equation*}
\begin{aligned}
\sum_{n=-k+2}^\infty&\int_{\widetilde{G}_\lambda^c}|\langle g,f_n\rangle|^p|f_n(t)|^p\dif t \\
&\lesssim_p\sum_{n=-k+2}^\infty\sum_{j:n\geq\polyfun(V_j)}R^{p d_n(V_j)}|\langle g_j,f_n\rangle|^p\int_{\widetilde{G}_\lambda^c} |f_n(t)|^p\dif t \\
&\leq \sum_{n=-k+2}^\infty\sum_{j:n\geq\polyfun(V_j)}R^{p d_n(V_j)}|\langle g_j,f_n\rangle|^p\int_{\widetilde{V}_j^c} |f_n(t)|^p\dif t \\
&\leq \sum_{j=1}^\infty\sum_{n\geq\polyfun(V_j)}R^{p d_n(V_j)}|\langle g_j,f_n\rangle|^p\int_{\widetilde{V}_j^c} |f_n(t)|^p\dif t 
\end{aligned}
\end{equation*}
We choose $R>1$ such that $R\gamma<1$ with the parameter $\gamma<1$ from Theorem \ref{thm:maintool} and apply Lemma \ref{lem:techn2} to obtain
\[
\sum_{n=-k+2}^\infty\int_{\widetilde{G}_\lambda^c}|\langle g,f_n\rangle|^p|f_n(t)|^p\dif t \lesssim_p \sum_{j=1}^\infty \|g_j\|_p^p = \|g\|_p^p,
\]
proving \eqref{eq:main:finaltoprove} and with it the inequality $\|Sf\|_p^p\lesssim_p \|f\|_p^p$. Thus the proof of Theorem \ref{thm:uncond} is completed.
\end{proof}

\subsubsection*{\bfseries \emph{Acknowledgments}}
During the development of this paper, the author visited repeatedly the IMPAN in Sopot/Gda\'nsk. It is his pleasure to thank this institution for its hospitality and for providing excellent working conditions. He is grateful to Anna\,Kamont, who suggested the problem considered in this paper and contributed many valuable remarks towards its solution, and to Zbigniew\,Ciesielski for many useful discussions. 

The author is supported by the Austrian Science Fund, FWF project P 23987-N18 and the stays in Sopot/Gda\'nsk were supported by MNiSW grant N N201 607840.

\bibliographystyle{plain}
\bibliography{uncondspline}
\end{document}